\def\showauthornotes{0}
\def\showkeys{0}
\def\showdraftbox{1}
\definecolor{darkred}{rgb}{0.5,0,0}
\definecolor{darkgreen}{rgb}{0,0.5,0}
\definecolor{darkblue}{rgb}{0,0,0.5}
\newcommand{\Authornote}[2]{{\sf\small\color{red}{[#1: #2]}}}
\newcommand{\Authorcomment}[2]{{\sf \small\color{gray}{[#1: #2]}}}
\newcommand{\Authorfnote}[2]{\footnote{\color{red}{#1: #2}}}
\newcommand{\Authornote}[2]{}
\newcommand{\Authorcomment}[2]{}
\newcommand{\Authorfnote}[2]{}
\newtheorem{theorem}{Theorem}[section]
\crefname{conjecture}{Conjecture}{Conjectures}
\newtheorem{definition}[theorem]{Definition}
\crefname{definition}{Definition}{Definitions}
\newtheorem{lemma}[theorem]{Lemma}
\crefname{lemma}{Lemma}{Lemmas}
\newtheorem{remark}[theorem]{Remark}
\crefname{remark}{Remark}{Remarks}
\newtheorem{proposition}[theorem]{Proposition}
\crefname{proposition}{Proposition}{Propositions}
\newtheorem{corollary}[theorem]{Corollary}
\crefname{corollary}{Corollary}{Corollaries}
\crefname{obs}{Observation}{Observations}
\newtheorem{claim}[theorem]{Claim}
\crefname{claim}{Claim}{Claims}
\crefname{fact}{Fact}{Facts}
\crefname{openprob}{Open Problem}{Open Problems}
\crefname{remk}{Remark}{Remarks}
\newtheorem{example}[theorem]{Example}
\crefname{example}{Example}{Examples}
\def\to{\rightarrow}
\def\eps{\varepsilon}
\def\epsilon{\varepsilon}
\def\phi{\varphi}
\def\implies{\Rightarrow}
\renewcommand{\bar}{\overline} 
\newcommand{\Q}{{\mathbb Q}}
\newcommand{\R}{{\mathbb R}}
\newcommand{\E}{{\mathbb E}}
\newcommand{\C}{{\mathbb C}}
\newcommand{\N}{{\mathbb{N}}}
\newcommand{\F}{{\mathbb F}}
\newcommand{\abs}[1]{\ensuremath{\left\lvert #1 \right\rvert}}
\newcommand{\Esymb}{\mathbb{E}}
\newcommand{\Psymb}{\mathrm{Pr}}
\DeclareMathOperator*{\ExpOp}{\Esymb}
\DeclareMathOperator*{\ProbOp}{\Psymb}
\renewcommand{\Pr}{\ProbOp}
\renewcommand{\E}{\ExpOp}
\newfont{\inhead}{eufm10 scaled\magstep1}
\newcommand{\polylog}{{\mathrm{polylog}}}
\DeclareMathOperator\supp{Supp}
\definecolor{darkred}{rgb}{0.5,0,0}
\definecolor{darkgreen}{rgb}{0,0.5,0}
\renewcommand{\eps}{\varepsilon}
\renewcommand{\epsilon}{\eps}
\newcommand{\set}[2][]{{\left\{#2\right\}}^{#1}}
\newtheorem{THEOREM}{Theorem}
\newcommand{\mb}[1]{\mathbb{#1}}
\newcommand{\mc}[1]{\mathcal{#1}}
\newcommand{\poly}{\mathrm{poly}}
\newcommand{\size}[1]{\abs{#1}}
\newcommand{\mon}[1]{\|{#1}\|}
\newcommand{\condset}[2]{\set{#1 \; \left| \;#2 \right. }}
\newcommand{\restrict}[1]{| _{#1}}
\newcommand{\divs}{\; | \;}
\newcommand{\ndivs}{\not | \;}
\newcommand{\Fn}{\mathbb{F}[x_{1},x_{2},\ldots ,x_{n}]}
\newcommand{\Fna}{\mathbb{F}[x_{1},x_{2},\ldots ,x_{n+1}]}
\newcommand{\Fny}{\mathbb{F}[y,x_{1},x_{2},\ldots ,x_{n}]}
\newcommand{\BigO}{\mathcal{O}}
\newcommand{\eqdef}{\stackrel{\Delta}{=}}
\newcommand{\nequiv}{\not \equiv}
\newcommand{\var}{\mathrm{var}}
\newcommand{\SP}{\mathcal{SP}}
\newcommand{\SB}{\xi}
\newcommand{\cost}{\mathrm{c}}
\newcommand{\cF}{\overline{\F}}
\newcommand{\lc}{\mathrm{lc}}
\newcommand{\RFn} {\F(x_{1},x_{2},\ldots ,x_{n})}
\newcommand{\RFnp}{\F(x^p_{1},x^p_{2},\ldots ,x^p_{n})}
\newcommand{\Const}{\mathrm{C}}
\newcommand{\fy} {f(y,x_{1},x_{2},\ldots ,x_{n})}
\newcommand{\Fpar}[2]{\frac{\partial {#1}}{\partial {x_#2}}}
\newcommand{\parf}{\frac{\partial f}{\partial y}}
\newcommand{\Disc}{\Delta}
\newcommand{\Res}{\mathrm{Res}}
\newcommand{\xb}{\bar{x}}
\newcommand{\ab}{\bar{a}}
\newcommand{\bb}{\bar{b}}
\newcommand{\ignore}[1]{}
\newcommand*{\rom}[1]{\expandafter\@slowromancap\romannumeral #1@}
\newcommand{\convhull}{{CS}}
\title{
 Deterministic Factorization of Sparse Polynomials with Bounded Individual Degree  
}
\author{ Vishwas Bhargava \thanks{Department of Computer Science, Rutgers University, Piscataway, NJ 08854. Email: {\tt vishwas1384@gmail.com}.}
\and
Shubhangi Saraf \thanks{
Department of Mathematics \& Department of Computer Science, Rutgers University, Piscataway, NJ 08854. Research supported in part by NSF grants
CCF-1350572 and CCF-1540634. Email: {\tt shubhangi.saraf@gmail.com}.}
\and 
Ilya Volkovich \thanks{Department of EECS, CSE Division, University of Michigan, Ann Arbor, MI 48109. Email: {\tt ilyavol@umich.edu}. 
.} }
\date{}
\begin{document}

\sloppy

\maketitle

%\draftbox
\setcounter{page}{1}
\thispagestyle{empty}
\begin{abstract}
In this paper we study the problem of deterministic factorization of sparse polynomials. We show that if $f \in \Fn$ is a polynomial with $s$ monomials, with individual degrees of 
its variables bounded by $d$, then $f$ can be deterministically factored in time $s^{\poly(d)\log n}$. Prior to our work, the only efficient factoring algorithms known for this class of polynomials were randomized, and other than for the cases of $d=1$ and $d=2$, only exponential time deterministic factoring algorithms were known. 

A crucial ingredient in our proof is a quasi-polynomial sparsity bound for factors of sparse polynomials of bounded individual degree. In particular we show if $f$ is an $s$-sparse polynomial in $n$ variables, with individual degrees of 
its variables bounded by $d$, then the sparsity of each factor of $f$ is bounded by $s^{\BigO({d^2\log{n}})}$. This is the first nontrivial bound on factor sparsity  for  $d>2$.  Our sparsity bound uses techniques from convex geometry, such as the theory of Newton polytopes and  an approximate version of the classical Carath\'eodory's Theorem. 

Our work addresses and partially answers a question of von zur Gathen and Kaltofen (JCSS 1985) who asked whether a quasi-polynomial bound holds for the sparsity of  factors of sparse polynomials.

\end{abstract}

% \begin{CCSXML}
% <ccs2012>
% <concept>
% <concept_id>10003752.10003777.10003783</concept_id>
% <concept_desc>Theory of computation~Algebraic complexity theory</concept_desc>
% <concept_significance>500</concept_significance>
% </concept>
% <concept>
% <concept_id>10003752.10010061.10010062</concept_id>
% <concept_desc>Theory of computation~Pseudorandomness and derandomization</concept_desc>
% <concept_significance>300</concept_significance>
% </concept>
% </ccs2012>
% \end{CCSXML}

% \ccsdesc[500]{Theory of computation~Algebraic complexity theory}
% \ccsdesc[300]{Theory of computation~Pseudorandomness and derandomization}

\section{Introduction} 
%write something here-
Polynomial factorization is  one of the most fundamental questions in computational algebra. The problem of multivariate polynomial factorization asks the following: Given $f \in \F[x_1,x_2 \ldots, x_n]$  a multivariate polynomial over a field $\F$, compute each of the irreducible factors of $f$. Other than being natural and central,
the problem has many applications in areas such as list decoding \cite{Sudan97, GS99},  derandomization \cite{KabanetsImpagliazzo04} and cryptography \cite{ChorRiverst88}.

There has been a large body of research studying efficient algorithms for this problem (see e.g. \cite{GathenGerhard99})
and numerous \emph{randomized} algorithms were designed  \cite{GathenKaltofen85, Kaltofen87, Kaltofen89, KaltofenTrager90, GathenGerhard99,Kaltofen03,Gathen06}.
However, the question of whether there exist \emph{deterministic} algorithms for this problem remains an
important and interesting open question (see \cite{GathenGerhard99,Kayal07}). 

Another fundamental question in algebraic complexity is the problem of Polynomial Identity Testing (PIT). The problem of PIT asks the following: Given a polynomial $f \in \Fn$ represented by a small arithmetic circuit, determine if the polynomial is identically $0$. 
In a recent work, Kopparty et al~\cite{KSS14} showed that the problem of derandomizing multivariate polynomial factorization is {\it equivalent} to the problem of derandomizing polynomial identity testing for general arithmetic circuits. They showed this result in both the white-box and the black-box settings. 
We already know deterministic PIT algorithms for several interesting classes of arithmetic circuits, and this raises the very natural question of whether we can derandomize {\it polynomial factoring} for these classes. Perhaps the most natural such class of polynomials is the class of {\it sparse} polynomials. 

The sparsity of $f$, denoted
$\mon{f}$ , is the number of monomials (with non zero coefficients) appearing in $f$. For instance,
the sparsity of the polynomial $x_1 +x_2^3+x_3x_4+20$ is four. 

Factoring of sparse polynomials has been studied for over three decades. It was initiated by the work of von zur Gathen and Kaltofen~\cite{GathenKaltofen85} that gives the first {\it randomized} algorithm for factorization of sparse multivariate polynomials. The runtime of this algorithm has polynomial dependence on the sparsity of the {\it factors} of the underlying polynomial, and thus, very naturally, this work raised the question of whether one can find efficient bounds on the sparsity of factors of a sparse polynomial.

%The sparsity of $f$ is a natural complexity measure one can use for polynomials and was studied
%in various contexts \cite{GathenKaltofen85,KlivansSpielman01,Zippel79,SSS13} \\
%TODO: Add Oliveira

In this paper, we consider the following two problems:
(1) Prove efficient bounds on the sparsity of the factors of sparse polynomials. (2) Derandomize polynomial factorization for sparse polynomials\footnote{These questions were raised as important open questions in a recent survey by Forbes and Shpilka~\cite{FS15}.}.  

% These questions of proving efficient sparsity bounds, as well as derandomizing factoring of sparse polynomials were asked a

Indeed, these are extremely natural questions to study. However already for general fields, we know that one cannot hope to prove a strong sparsity bound for the factors of a sparse polynomial. (We discuss two interesting examples of polynomials whose factors have a big blow-up in the number of monomials in Section~\ref{sec:tightness}). 

In this paper, we focus our attention on the class of sparse polynomials with bounded individual degree, i.e. for some parameter $d$, we limit the degree of each variable $x_i$ to be at most $d$. 

One very interesting such class of polynomials is the class of sparse multilinear polynomials ($d=1$). This is the simplest case of sparse polynomials with bounded degree. In \cite{ShpilkaVolkovich10}, Shpilka and Volkovich gave a derandomization for the problem of polynomial factorization for this class. Factor sparsity bounds are fairly easy to show
for this class of polynomials, and armed with the sparsity bound and a technique for derandomizing a certain PIT problem that arises, they were able to derandomize factoring in this case. This was extended to the case $d=2$ in the work of Volkovich \cite{Volkovich17}, again by first showing a sparsity bound for the factors of polynomials of individual degree $2$, and then showing how to derandomize the polynomial factorization problem. 
For $d>2$, the techniques used by the above works for proving sparsity bounds on the factors of a polynomial seem 
to break down. 

In a recent beautiful work, Oliveira ~\cite{Oliveira15} showed that the factors of sparse polynomials of bounded individual degree  can be computed by small {\it depth-7} circuits. This again raises the very natural question: What is the size of the best {\it depth-2} circuit computing the factors of a sparse polynomial of bounded individual degree. This is precisely the problem of proving sparsity bounds for the factors of a sparse polynomial of bounded individual degree, which is a question we study in this paper. 

The other question that we address in this work is the problem of deterministically factoring sparse polynomials of bounded individual degree. A bound on the sparsity of the factors of such a polynomial just implies that the factors will have an efficient representation as a sum of monomials. However in order to actually obtain the factors deterministically, there are several additional derandomization hurdles to overcome.

\subsection{Our Results}
In this paper we give the first deterministic quasi-polynomial time algorithm for factoring sparse polynomials of bounded individual degree. Prior to our work, only efficient randomized factoring algorithms were known for this class of polynomials, and other than for the cases of $d=1$ \cite{ShpilkaVolkovich10} and $d=2$ \cite{Volkovich17} only exponential time deterministic factoring algorithms were known. 

A crucial ingredient of our proof is a quasi-polynomial size sparsity bound for factors of sparse polynomials of bounded individual degree $d$.
In particular, we show that if $f$ is an $s$-sparse polynomial in $n$ variables with individual degrees of 
its variables bounded by $d$, then $f$ can be deterministic factored in time $s^{\poly(d)\log n}$. This is the first nontrivial bound on factor sparsity for any $d>2$.  Our sparsity bound uses techniques from convex geometry, such as the theory of Newton polytopes and  an approximate version of the classical Carath\'eodory's Theorem.

We say that a polynomial $f \in \Fn$ has \emph{sparsity} $s$ if it has at most $s$ nonzero monomials. We say that it has individual degree at most $d$ if the maximum degree in each of its variables is bounded above by $d$.

We formally state below our factor sparsity bound and then our result on deterministic factoring. 

\begin{THEOREM}[Factor Sparsity Bound]
\label{THM:Main1}
Let $\F$ be an arbitrary field (finite or otherwise) and let $f \in \Fn$ be a polynomial of sparsity $s$ and individual degrees at most $d$, then the sparsity of every factor of $f$ is bounded by $s^{\BigO({d^2\log{n}})}$.
\end{THEOREM}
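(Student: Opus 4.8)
The plan is to prove the factor sparsity bound by passing through the geometry of Newton polytopes. The key object is the Newton polytope $\mathrm{Newt}(f)$ of a polynomial $f$: the convex hull of the exponent vectors of its monomials. The fundamental fact (Ostrowski's theorem) is that $\mathrm{Newt}(gh) = \mathrm{Newt}(g) + \mathrm{Newt}(h)$ (Minkowski sum), so if $f = gh$ then $\mathrm{Newt}(g)$ is a Minkowski summand of $\mathrm{Newt}(f)$, and in particular $\mathrm{Newt}(g) \subseteq \mathrm{Newt}(f)$ after translation. Since $f$ has individual degrees $\le d$, every factor also has individual degrees $\le d$, so all exponent vectors of $g$ lie in $\{0,1,\dots,d\}^n$; the content of $\mathrm{Newt}(g)$ is thus confined to a small box. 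The problem reduces to bounding the number of lattice points of $\{0,\dots,d\}^n$ that can lie inside a polytope which is a Minkowski summand of $\mathrm{Newt}(f)$ and whose vertices therefore are themselves ``nice'' with respect to the $s$ vertices of $\mathrm{Newt}(f)$.

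**Next I would** make the counting quantitative using convex geometry. Each monomial $x^\alpha$ of $g$ has $\alpha \in \mathrm{Newt}(g)$, and since $\mathrm{Newt}(g)$ is a Minkowski summand of $\mathrm{Newt}(f) = \mathrm{conv}(\{v_1,\dots,v_s\})$ (the $v_i$ the exponent vectors of $f$), one shows $\alpha - t$ lies in $\mathrm{Newt}(f)$ for a suitable translation $t$, hence $\alpha$ is a convex combination $\alpha = \sum_i \lambda_i v_i + t$. The classical Carath\'eodory theorem says $\alpha$ is a convex combination of $n+1$ of the $v_i$, which is not enough. The crucial device is the \emph{approximate Carath\'eodory theorem}: any point in $\mathrm{conv}(\{v_1,\dots,v_s\})$ can be $\epsilon$-approximated in $\ell_\infty$ (or $\ell_2$) norm by a convex combination of only $O(\mathrm{poly}(d)/\epsilon^2 \cdot \log(\text{something}))$ of the $v_i$ with rational coefficients of bounded denominator — more precisely, with a multiset of size roughly $k = O(d^2 \log n)$ one can hit every lattice point of $\{0,\dots,d\}^n \cap \mathrm{Newt}(f)$ exactly, because once the $\ell_\infty$-error is below $1/2$ and the target is integral, rounding recovers it. This identifies each exponent vector $\alpha$ of $g$ with (the rounding of) an average of a size-$k$ multiset drawn from $\{v_1,\dots,v_s\}$, of which there are at most $s^k = s^{O(d^2\log n)}$. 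That gives the bound $\|g\| \le s^{O(d^2 \log n)}$.

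**The main obstacle** will be executing the approximate-Carath\'eodory step with the right parameters and in a way that actually recovers lattice points \emph{exactly} rather than approximately. One has to: (i) get the approximation error in $\ell_\infty$ below the lattice spacing $1$ (really below $1/2$), which with the Maurey/Carath\'eodory sampling argument costs a multiset of size $O(R^2/\epsilon^2)$ where $R$ bounds the $\ell_2$-radius of the $v_i$ around their centroid — and here $R^2 \le nd^2$, which would give $k = O(nd^2)$, far too large; so the argument must be run coordinate-by-coordinate or in $\ell_\infty$ directly, using that each coordinate of $v_i$ lies in $[0,d]$, to trade the ambient dimension $n$ for a $\log n$ factor (a union bound over $n$ coordinates inside a Chernoff/Hoeffding estimate), yielding $k = O(d^2 \log n)$. (ii) One must handle the Minkowski-summand translation $t$ carefully so that it too is integral and does not blow up the count, and deal with the possibility that $\mathrm{Newt}(f)$ is lower-dimensional. (iii) Finally, bounding $\|g\|$ by the number of lattice points of $\{0,\dots,d\}^n$ in $\mathrm{Newt}(g)$ is automatic, but one should double-check that distinct exponent vectors can indeed be charged to distinct size-$k$ multisets (or at worst, that the map from multisets to recoverable lattice points is surjective), so the count $s^k$ is a valid upper bound. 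Modulo these technical points, the chain $\|g\| \le \#(\mathbb{Z}^n \cap \mathrm{Newt}(g)) \le s^{O(d^2\log n)}$ closes the proof.
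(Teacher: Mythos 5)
Your proposal is correct, and it reaches the same quantitative bound through the same two main ingredients (Ostrowski's theorem on Newton polytopes and the $\ell_\infty$ approximate Carath\'eodory theorem with $\epsilon = \Theta(1/d)$ after scaling into $[0,1]^n$, giving $k = O(d^2\log n)$ and a count of at most $s^k$ multisets), but the decomposition is genuinely different from the paper's. The paper lower-bounds the number of \emph{vertices} of $P_g$: it first proves that the convex hull of any subset $E \subseteq \{0,\ldots,d\}^n$ has at least $\size{E}^{1/O(d^2\log n)}$ vertices (approximating each point of $E$ by a $k$-uniform combination of vertices and separating distinct points by the triangle inequality, since points of the scaled grid are $1/d$-separated in $\ell_\infty$), and then transfers this to $f$ via the Minkowski-sum vertex inequality $\size{V(P_g+P_h)} \geq \size{V(P_g)}$ together with $\mon{f} \geq \size{V(P_f)}$. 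You instead upper-bound the number of \emph{lattice points} of $P_f$ itself by $s^{O(d^2\log n)}$, applying approximate Carath\'eodory to the $s$ exponent vectors of $f$ and recovering each integral target exactly by rounding (error below $1/2$), and you embed $\supp(g)$ into these lattice points via a single integral translation: $P_g + w \subseteq P_g + P_h = P_f$ for $w$ any exponent vector of $h$. What your route buys: it needs only the containment coming from Ostrowski's identity and entirely avoids the supporting-hyperplane argument behind the vertex inequality for Minkowski sums, and it gives a clean standalone fact (the Newton polytope of an $s$-sparse polynomial of individual degree $d$ contains at most $s^{O(d^2\log n)}$ lattice points), which immediately bounds any polynomial supported in a translate of $P_f$. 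What the paper's route buys: since its Carath\'eodory step is applied to $\supp(g)$ rather than $\supp(f)$, it yields the stronger statement $\mon{f} \geq \mon{g}^{1/O(d^2\log n)}$ assuming only that $g$ has individual degree at most $d$, with no degree assumption on $f$ or $h$; your scaling is by the individual degree of $f$, so your argument needs the hypothesis on $f$ (which is all Theorem 1.1 requires, but the paper's intermediate vertex-counting result is the more general and reusable one). Your flagged technical points are all handled correctly as you anticipate: the $\ell_\infty$/Hoeffding version avoids the $O(nd^2)$ loss, the translation can be taken to be an exponent vector of $h$ and hence integral, low-dimensionality of $P_f$ is harmless, and a fixed size-$k$ multiset rounds to a single lattice point, so the charging is injective.
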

%\textbf{Remark}: This proves that quasipoly sparse polynomials of ind. deg atmost logn are closed under factoring.

\begin{remark}
Note that for $d = \polylog (n)$, we obtain a quasi-polynomial sparsity bound on the factors of $f$. Indeed when $s = \poly(n)$, for any $d= o(\sqrt n/\log^2 n)$, we obtain a nontrivial sparsity bound on the factors of $f$. 
\end{remark}

Given a polynomial $f \in \Fn$, the \emph{complete factorization} of $f$ is a representation of $f$ as a product $h_1^{e_1} \cdots h_m^{e_m}$, where $h_{1}, h_{2}, \ldots, h_{m}$-s are pairwise coprime, irreducible polynomials, and $e_1, e_2, \ldots, e_m$ are positive integers. This representation is unique up to reordering of the $h_i$. 

\begin{THEOREM}[Main]
\label{THM:Main2}
There exists a deterministic algorithm that given a polynomial $f \in \Fn$ of sparsity $s$ and individual degrees at most $d$, computes the complete factorization of $f$, using 
$s^{\BigO({d^7\log{n}})} \cdot \poly(\cost_{\F}(d^2))$ field operations, where: %Field operations 
\begin{enumerate}
\item $\cost_{\F}(d) = \poly(\ell \cdot p,d)$, if $\F=\F_{p^\ell}$.
\item $\cost_{\F}(d) = \poly(d, t)$,  where $t$ is maximum bit-complexity of the coefficients of $f$, if $\F=\mathbb{Q}$.
\end{enumerate}

\end{THEOREM}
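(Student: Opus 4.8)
\noindent\emph{Proof proposal; reduction to the square-free case.}
The plan is to reduce, in a few stages, the factorization of $f$ to the factorization of polynomials in a bounded number of variables, for which deterministic algorithms are already available over $\F_{p^\ell}$ and over $\Q$, while invoking the factor sparsity bound of \Cref{THM:Main1} at every stage to keep all intermediate objects sparse and, above all, to be able to undo the reductions on the factors at the end. First I would apply the standard reductions that bring us to listing the distinct irreducible factors of a square-free, primitive polynomial: compute the square-free decomposition $f=\prod_j g_j^{\,j}$ via $\gcd$'s of $f$ with its formal partial derivatives (and, in characteristic $p$, extraction of $p$-th powers through the Frobenius, which is where the dependence on $p$ and $\ell$ in $\cost_{\F}$ enters), and recover the multiplicities $e_i$ by repeated exact division and by removing contents. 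Every polynomial produced along the way divides $f$, hence by \Cref{THM:Main1} is $s^{\BigO(d^2\log n)}$-sparse of individual degree $\le d$; in particular all the required $\gcd$ computations have sparse outputs and can be performed deterministically by the same machinery used below.

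\noindent\emph{Variable reduction.}
By \Cref{THM:Main1}, the union $\Lambda\subseteq\{0,1,\dots,d\}^n$ of the supports of $f$ and of all its irreducible factors has size at most $s^{\BigO(d^2\log n)}$. I would then construct a deterministically specified, polynomial-size family of substitutions $\sigma\colon x_i\mapsto y_{\tau(i)}\cdot z^{w_i}$ into $m=\BigO(\log n)$ fresh variables --- an isolating, Klivans--Spielman / Newton-polytope style map ranging over a deterministic set of weight assignments and moduli, composed with a generic affine shift --- such that for at least one member $\sigma$ of the family: (i) $\sigma$ is injective on $\Lambda$, so that $\mon{\sigma(g)}=\mon{g}$ for $f$ and for every irreducible factor $g$, and $\sigma$ is therefore invertible on these sparse polynomials; and (ii) $\sigma$ preserves the factorization pattern, i.e. $\sigma(f)=\prod_i\sigma(g_i)$ is the irreducible factorization of $\sigma(f)$, with the $\sigma(g_i)$ irreducible, pairwise coprime, and with no cancellation in the product. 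Property (ii) is an effective Hilbert-irreducibility / Bertini statement: the ``bad'' $\sigma$ are the zeros of a non-zero polynomial assembled from resultants and discriminants of $f$ and of the $g_i$, whose description has controlled size precisely because the $g_i$ are sparse, so a hitting set for sparse polynomials of the relevant parameters suffices. Certifying which member of the family is good is itself a polynomial identity test for sparse polynomials, again handled by the Klivans--Spielman hitting set.

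\noindent\emph{Factoring in few variables and lifting back.}
Having passed to $\sigma(f)$, a polynomial in $m=\BigO(\log n)$ variables of individual degree $\poly(s,n,d)$, I would make it monic in one variable by a shift and evaluate all but $\BigO(1)$ of the $y_j$ at points from a deterministic hitting set, obtaining a polynomial in $\BigO(1)$ variables of degree $\poly(s,n,d)$; this we factor with the known deterministic algorithms over $\F_{p^\ell}$ and over $\Q$ (the source of the $\poly(\cost_{\F}(\cdot))$ factor). Deterministic Hensel lifting then propagates the factorization back to all of $y_1,\dots,y_m$, recovering $\sigma(g_1),\dots,\sigma(g_k)$; the lift is feasible because its output is sparse by (i) and can be pinned down by sparse interpolation and recombination. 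Finally invert $\sigma$ on each $\sigma(g_i)$ using its injectivity on $\Lambda$ to obtain the $n$-variate irreducible factor $g_i$, and assemble the complete factorization using the multiplicities from the first step. Tracking the blow-ups --- sparsity $s^{\BigO(d^2\log n)}$ throughout, degrees $\poly(s,n,d)$, and a $\poly(d)$ overhead from individual-degree-$d$ arithmetic and from the nesting of the reductions --- should yield the stated $s^{\BigO(d^7\log n)}\cdot\poly(\cost_{\F}(d^2))$ operation count.

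\noindent\emph{The main obstacle.}
I expect the difficulty to be concentrated in the variable-reduction step: producing a \emph{single, deterministically specified} substitution that simultaneously collapses $n$ variables down to $\BigO(\log n)$, leaves the sparsity of the (a priori unknown) factors intact so that it can be inverted on them, and creates no spurious factorization. The last requirement is an effective irreducibility-preservation statement, and it is exactly here that \Cref{THM:Main1} is indispensable: it is what bounds the complexity of the polynomial certifying a bad choice of $\sigma$, and hence what allows a hitting set for sparse polynomials to complete the derandomization.
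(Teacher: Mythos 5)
Your plan hinges on the variable-reduction step, and the gap is exactly where you suspect it is --- but your proposed fix for it does not work. You claim that the substitutions $\sigma$ that fail to preserve the factorization pattern are the zeros of a nonzero polynomial ``assembled from resultants and discriminants of $f$ and of the $g_i$,'' and that sparsity of the $g_i$ (via \Cref{THM:Main1}) lets a Klivans--Spielman hitting set find a good $\sigma$. Resultants and discriminants certify only that the \emph{projected} factors are square-free and pairwise coprime; they say nothing about an irreducible $g_i$ remaining irreducible after the substitution. An irreducible factor can split into several coprime pieces under a projection at which every relevant resultant and discriminant is nonzero (the paper's own example: $(y^2-x)\cdot y$ and $y(y-x)(y+x)$ both specialize to $y(y-1)(y+1)$ at $x=1$). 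There is no known polynomial of controlled --- let alone sparse --- complexity whose non-vanishing certifies irreducibility preservation, which is precisely why the paper states that derandomizing Hilbert irreducibility ``appears to be a challenging task'' and deliberately avoids it. The same issue resurfaces in your Hensel-lifting stage: lifting back from $\BigO(1)$ to $\BigO(\log n)$ variables and recombining requires knowing how the true factors split under the evaluations, which is again the unproved irreducibility-preservation statement. (Your preprocessing also quietly assumes deterministic multivariate gcd and content computations for sparse polynomials, which the paper never needs.)

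The paper closes this gap by weakening the requirement and paying for it combinatorially. It first reduces to a polynomial monic in a variable $y$ (Definition \ref{def:make monic}, \Cref{lem:monic sparsity and degree}), so that there are at most $d$ factors in total; it then only asks for a point $\ab$ at which distinct factors have trivial gcd, which is certified by the pairwise resultants $\Res_y(h_i,h_{i'})$ --- these \emph{are} sparse by \Cref{THM:Main1} and Lemma \ref{lem:res}, so a hitting set for sparse polynomials suffices (\Cref{lem:sparse-res}). Irreducible factors are allowed to split into pieces at $\ab$; the correct grouping of the $\le d$ univariate pieces is found by brute force over all $d^{\BigO(d)}$ partitions, black-box access to each multivariate factor is obtained by bivariate factorization along lines through the anchor $\ab$ (Algorithm \ref{alg:black-box factorization}, \`a la Kaltofen--Trager), the factors are reconstructed by sparse interpolation using the sparsity bound, wrong guesses are pruned by explicit multiplication, and the genuinely irreducible factorization is selected via the maximization criterion of Lemma \ref{lem:max fact}. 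Until you can exhibit a certificate of controlled complexity for irreducibility preservation (or replace that step by a splitting-tolerant mechanism as the paper does), your reduction to $\BigO(\log n)$ and then $\BigO(1)$ variables cannot be carried out deterministically, and the proposal does not establish \Cref{THM:Main2}.
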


\begin{remark}

In the statement of Theorem~\ref{THM:Main2}, $\cost_{\F}(d)$ denotes the time of the best known algorithm that factors a univariate polynomial of degree $d$ over $\F$.

\end{remark}
\begin{remark}
A more refined version of Theorem~\ref{THM:Main2} is given in Theorem~\ref{thm:fact sparse}. The run time for the deterministic factoring algorithm in Theorem~\ref{thm:fact sparse} gives the precise dependence on the sparsity bound for factors of sparse polynomials. In particular, if one could improve the sparsity bound, then one could plug it into the statement of Theorem~\ref{thm:fact sparse} to get an improved run time for the deterministic factoring algorithm.
\end{remark}

%The best setting of our parameters is $d=\poly(\log{n})$, s= quasipoly in $n$.\\
%\textbf{Remark}: A better sparsity bound will improve our factoring-result. 

\subsection{Related Work}

Over the last three decades, the question of derandomizing sparse polynomial factorization has seen only very partial progress. 

The study of sparse polynomial factorization was initiated in~\cite{GathenKaltofen85}, where the first {\it randomized} algorithm for the factorization of sparse polynomials was given. The runtime of this algorithm was polynomial in the sparsity of the factors, and in this work, von zur Gathen and Kaltofen explicitly raised the question of proving improved sparsity bounds for the factors of sparse polynomials.

In~\cite{DvirOliveira14}, Dvir and Oliveira gave an elegant approach for bounding the sparsity of factors of a general sparse polynomial by studying the Newton polytopes of the polynomial and its factors. This approach did not eventually lead to an efficient sparsity bound. However it did inspire our work and our approach of using techniques from convex geometry to bound the factors of sparse polynomials. 

In \cite{ShpilkaVolkovich10}, Shpilka and Volkovich gave efficient deterministic factoring algorithms for sparse multilinear polynomials.
This result was extended in \cite{Volkovich15a} to the model of sparse polynomials that split into multilinear 
factors. In \cite{Volkovich17}, Volkovich gave an efficient deterministic factorization algorithm for sparse multiquadratic polynomials. The results~\cite{ShpilkaVolkovich10, Volkovich17} correspond to the special case when the individual degree $d$ equals $1$ and $2$, respectively. For $d \geq 3$, the proof techniques of both these works broke down, and a new approach was needed. 

The problem of multivariate polynomial factorization for polynomials of bounded individual degree was also studied in~\cite{Oliveira15}. In this work, among other things, it was shown that if $f \in \Fn$ is an $s$-sparse polynomial  of individual degree $d$, where $\F$ is a field of characteristic $0$, then any factor of $f$ can be computed by a depth-$7$ circuit of size $\poly (dn^d, s)$.
In particular if $d$ is constant, then this shows that any factor of $f$ can be computed by a depth-$7$ circuit with only a polynomial blow-up in size. This is in contrast to our work, where we want to bound the number of {\it monomials} in the factors of $f$. In other words, we attempt to represent the factors of $f$ by the more natural class of depth-$2$ circuits and then understand the size complexity (which we show is quasi-polynomial). We also would like to point out that our result holds over any field $\F$. %(finite or otherwise).

Another work that is relevant in this context is the work of Kopparty et al~\cite{KSS14} which shows an equivalence between the problems of polynomial identity testing (PIT) and polynomial factorization. In particular, it shows that if one can derandomize PIT for the class of general arithmetic circuits, then one can derandomize polynomial factorization for that same class. Since there are several natural examples of classes of polynomials for which we know deterministic PIT algorithms, this naturally raises the question (which was indeed raised in~\cite{KSS14}) of whether one can derandomize factoring for the corresponding classes of polynomials. Sparse polynomials are, perhaps, the most natural example of such a class, and our work makes the first significant advance in this direction.

%For the testing version of the problem, Saha et al. \cite{SSS13} presented an efficient deterministic algorithm 
%for the special case when the sparse polynomials are sums of univariate polynomials.

\subsection{Proof overview}
Our proof of the deterministic factoring algorithm has two self- contained and independently interesting components. We first prove a sparsity bound on the factors of sparse polynomials with bounded individual degree (Theorem \ref{THM:Main1}). We then show how such a sparsity bound can be used effectively to derandomize factoring of this same class of polynomials (Theorem \ref{THM:Main2}). 

We elaborate on both these components below. 

\subsubsection{Proof Overview for the Sparsity Bound: Theorem \ref{THM:Main1}}

Our proof uses tools from convex geometry such as the theory of Newton polytopes and an approximate version of Carath\'eodory's theorem. 

Suppose that $f, g, h \in \Fn$ are polynomials such that $f = g\cdot h$. We want to show that if $f$ is $s$-sparse and with bounded individual degree $d$, then $g$ and $h$ are both at most $s'$ sparse, where $s' = s^{\BigO({d^2\log{n}})}$. 

We will show this by instead showing the following slightly more general result. For a polynomial $f$, let $\mon{f}$ denote the sparsity (i.e. the number of nonzero monomials) of $f$. 
Suppose that $g$ is any polynomial of individual degree $d$ such that $\mon{g} = s$, and suppose that $f = g\cdot h$  (with no assumptions on the degrees of $f$ and $h$), then $\mon{f} \geq s^{\frac{1}{\BigO(d^2\log n)}}$. 
In particular, there is no polynomial $h$ that one can multiply $g$ with, so that the product $g\cdot h$ has an overwhelming cancellation of monomials. 

\paragraph{Newton Polytopes and Connection to the Sparsity Bound}
Let $f \in \Fn$ be a polynomial such that:
 $$f = \sum a_{i_1i_2 \ldots i_n}x_1^{i_1}x_2^{i_2}\cdots x_n^{i_n}.$$ 

One can consider the set $$\supp(f) = \condset {(i_1, i_2, \ldots, i_n)}{  a_{i_1i_2 \ldots i_n} \neq 0} \subseteq \R^n$$ of exponent vectors of $f$. One can then associate a polytope $P_f \subseteq \R^n$, called the Newton polytope of $f$, which is the convex hull of points in $\supp(f)$.

A classic fact about Newton polytopes that was observed by Ostrowski~\cite{Ostrowski1921} in 1921 states that if $f = g \cdot h$, then $P_f$ is the Minkowski sum of $P_g$ and $P_h$, where for two polytopes $A$ and $B$, their Minkowski sum $A+B$ is defined to be the set of points $\condset{u+v}{u \in A \mbox{ and } v \in B}$. Minkowski sums of polytopes are extremely well-studied and it is not difficult to show that the Minkowski sum of two polytopes is itself a polytope. Moreover, if we let $V(P)$ denote the set of vertices (equivalently corner points) of a polytope $P$, then $$\size{V(A+B)} \geq \max \set{ \size{V(A)}, \size{V(B)} }.$$

Once we have these basic facts about Newton polytopes and Minkowski sums, it follows that a lower bound for $\mon{f}$ (in terms of $\mon{g}$), follows from a lower bound on $\size{V(P_f)}$, and in particular from a lower bound on $\size{V(P_g)}$.
Thus, via the theory of Newton polytopes and Minkowski sums, we see that the monomials of $g$ that correspond to the vertices of $P_g$ are very {\it robust}. There is no way of multiplying $g$ with any other polynomial and obtaining a cancellation of monomials that will make these special monomials corresponding to the vertices of $P_g$ ``disappear".

Thus for $f = g \cdot h$, our task of lower bounding $\mon{f}$ in terms of $\mon{g}$ has reduced to lower bounding $\size{V(P_g)}$, where $P_g$ is the Newton polytope of a polynomial $g$ such that $\mon{g} = s$ and $g$ has individual degree bounded by $d$. Showing a lower bound on $\size{V(P_g)}$ will be the main technical core of our proof of the sparsity bound. 

We note that this connection between Newton polytopes and sparsity bounds was first made in~\cite{DvirOliveira14} and indeed it inspired the approach taken in this paper.

%TBD - only $g$ is ML?
\paragraph{Easy Example with Multilinear Polynomials}
We demonstrate the approach of using Newton Polytopes for proving sparsity bounds via the following ``toy'' example of showing a sparsity bound for multilinear polynomials (i.e, when individual degree is bounded by 1). 
Suppose that $f, g, h \in \Fn$ and $g$ is a multilinear polynomial such that $f$ is nonzero and $f = g\cdot h$. One can give the following easy proof by induction on $n$, of the fact that $\mon{f} \geq \mon{g}$. Let $x_1$ be  variable that both $g$ and $h$ depend on. (If such a variable doesn't exist then the sparsity bound is trivial.)
Moreover assume WLOG that $x_1$ doesn't divide $h$, because if it did, then we could factor it out and work with the resulting polynomials. 
Since $g$ is multilinear, we can express $g$ as $g = g_1x_1 + g_0$, where $g_1$ and $g_0$ are multilinear polynomials not depending on $x_1$, and $g_1$ is nonzero.  Let $h = h_dx_1^d + \ldots +h_0$, where $h_d$ and $h_0$ are nonzero, and the $h_i$ don't depending on $x_1$. Now,

\begin{equation*}
f =(g_1x_1 + g_0)\cdot (h_dx_1^d + \ldots +h_0) 
 = (g_1\cdot h_d)x_1^{d+1} + \ldots +(h_0 \cdot g_0).
\end{equation*}

Thus, $\mon{f} \geq \mon{g_1\cdot h_d} + \mon{h_0 \cdot g_0}$. By the induction hypothesis, $\mon{g_1\cdot h_d} \geq \mon{g_1}$ and $\mon{h_0 \cdot g_0} \geq \mon{g_0}$. It follows that $\mon{f} \geq \mon{g_1} + \mon{g_0}  = \mon{g}$.

Now let us give an alternate proof of the above bound using Newton polytopes. Let $\supp(g) \subseteq {\set{0,1}}^n $ be the set of exponent vectors of $g$. Then notice that no element of $\supp(g)$ can be written as a nontrivial convex combination of any other points of $\supp(g)$. In particular, {\it every} element of $\supp(g)$ is a vertex of $P_g$! Thus $\mon{f} \geq \size{V(P_f)} \geq \size{V(P_g)} = \size{\supp(g)} = \mon{g}$. 

\paragraph{Sparsity Bound from Carath\'eodory's Theorem}

Note that in general, for an arbitrary polynomial $g$, there is no good bound on the number of vertices of $P_g$ in terms of the number of monomials of $g$. For instance one can easily construct examples of polynomials $g$ with exponential in $n$ many monomials, and such that $P_g$ has only $n$ vertices. Here is an example : consider the polynomial $P_g = (x_1 + x_2 + \cdots + x_n)^n$. It clearly has exponentially many monomials. However $P_g$ has only $n$ vertices, which are the scalings of the coordinate vectors by $n$. 

%Example: Simplex 

In the case when $g$ has individual degree bounded by $d$, we will show that a much nicer bound actually holds. 
Notice that in this case, $\supp(g) \subseteq {\set{0,1,\ldots,d}}^n$. 
We will show that if $E \subseteq {\set{0,1,\ldots,d}}^n$ is an arbitrary subset of size $s$, then the convex hull of $E$ (denoted \convhull(E)) has at least $s^{\frac{1}{d^2 \cdot \log n}}$ vertices. 
This will immediately imply our sparsity bound. 

To show this bound, we will use an approximate version of Carath\'eodory's theorem. The classic version of Carath\'eodory's theorem is a fundamental result in convex geometry and it states that if a point $\mu \in \R^n$ lies in the convex hull of a set $V$, then $\mu$ can be written as the convex combination of at most $n+1$ points of $V$. 

Now, for a set $E \subseteq {\set{0,1,\ldots,d}}^n$, let $V(E)$ denote the vertices of the convex hull of $E$. It is easy to see that $V(E) \subseteq E$. Since every point $\mu \in E$ is a convex combination of elements of $V(E)$, by Carath\'eodory's theorem, it is a convex combination of at most $n+1$ elements of $V(E)$. 
Now $E$ is not an arbitrary collection of points. It is a subset of ${\set{0,1,\ldots,d}}^n$. Suppose we could show the following strengthened (and wishful) Carath\'eodory's theorem in this setting: 
For $E \subseteq {\set{0,1,\ldots,d}}^n$, every point $\mu \in E$ is a convex combination of at most $k$ elements of $V(E)$, where $k$ is some bound much smaller than $n+1$. Not only this, the convex combination is a {\it k-uniform} convex combination, i.e. all the coefficients in the convex combination are equal to $1/k$. 
Notice that in such a case, we can immediately conclude that $\size{E} \leq \size{V(E)}^k$, since each subset of $V(E)$ of size $k$ would ``recover" at most one element of $E$ via a k-uniform convex combination, and each element of $E$ must be recovered by some subset of $V(E)$ of size $k$. If such a result were true for $k \leq d^2 \log n$ then it would imply our sparsity bound!

It unfortunately (and not too surprisingly) turns out that such a wishful theorem is not true. (Though one needs to work a little to find a counterexample.)

However, very fortunately, something very close does end up being true, and it suffices for  our purpose! A suitable ``approximate'' version of Carath\'eodory's theorem suitably applied implies the following:
For $E \subseteq {\set{0,1,\ldots,d}}^n$, every point $\mu \in E$ can be {\it $\epsilon$-approximated} by a $k$-uniform convex combination of elements of $V(E)$, where $k = \BigO(d^2 \log n)$. 
Again, (and this time truly) one can conclude that  $\size{E} \leq \size{V(E)}^k$, since each subset of $V(E)$ of size $k$ could ``approximately recover" at most one element of $E$ via a k-uniform convex combination (by the triangle inequality the same point cannot approximate two different points of $E$), and each element of $E$ must be approximately recovered by some subset of $V(E)$ of size $k$. 
See Theorem~\ref{thm:approx-caratheodory} for the statement of the approximate Carath\'eodory theorem that we use.

\subsubsection{Proof Overview for the Factoring Algorithm: Theorem \ref{THM:Main2}}
\label{sec:proof 2}

Let $f \in \Fny$ be a multivariate polynomial with individual degrees at most $d$. While in general $f$ could have as many as $d(n+1)$ factors, our starting point is an observation that if $f$ is monic\footnote{a polynomial is \emph{monic} in a variable $x_i$ if the leading coefficient of highest degree of $x_i$ in $f$ is equal to $1$.See definition \ref{def:deg} for more details.} in $y$, then every factor of $y$ must also be monic in $y$. Consequently, $f$ has at most $d$ factors (total). 
This makes the monic case much easier to handle, and we first show how to factorize $f$ when $f$ is monic, and then we show how to extend our algorithm to the general non-monic case. 

In the monic case, there are at most $d$ factors. How would we identify these factors? The traditional approach  
\cite{GathenKaltofen85,Kaltofen89,KaltofenTrager90} suggests projecting the polynomial into a low-dimensional space, where the factorization problem is easy. Yet, in order to recover the original factors, the factorization ``pattern'' of $f$ should stay the same upon the projection. That is, every irreducible factor should remain irreducible upon the projection. This is typically achieved by the Hilbert Irreducibility Theorem, which shows that a random projection would achieve this goal.  Nonetheless, derandomizing the Irreducibility Theorem appears to be a challenging task. Instead, we take a somewhat different approach. 

\paragraph{Finding a ``good'' Projection}

First, we relax the requirement of maintaining the same factorization ``pattern'' to a requirement that different irreducible factors do not ``overlap'' upon projection (i.e. have no non-trivial gcd). This is a standard processing step in many factorization algorithms and it is usually taken care of by hitting the Discriminant of the polynomial $f$ (i.e. $\Delta_{y}(f)$). Yet this approach for obtaining our deterministic algorithm presents its challenges, and it is particularly tricky in the case that the characteristic of the ambient field $\F$ is finite (i.e. $\mathrm{char}(\F) > 0$). We show how to go around these problems. 
%Given the Factor Sparsity Bound (Theorem \ref{THM:Main1}) we can hit the factors ``directly''.

Formally, let $f \in \Fny$  be monic in $y$ and let $f(y,\xb) =  h_1^{e_1}(y,\xb)  \ldots  h_k^{e_k} (y,\xb)$ be the factorization of $f(y,\xb)$. We will project $f$ to a univariate polynomial in $y$ by setting all the variables in $\xb$ to elements of $\F$. In order to guarantee that different irreducible factors have no non-trivial gcd after projection, it suffices to find an assignment $\ab \in \F^n$ such that $$\forall  i \neq j: \gcd \left( h_i(y,\ab) ,  h_j(y,\ab) \right) = 1.$$ 
This condition translates into finding a single assignment $\ab$ that hits (i.e. is a nonzero assignment for) the \emph{Resultant}, $\Res_y (h_i,h_j)$, for all $i \neq j$ (see Section \ref{sec:res} for more details). As $f$ is an $s$-sparse polynomial, by Theorem \ref{THM:Main1}, each $h_i$ is an $s^{\BigO({d^2\log{n}})}$-sparse polynomial. Hence, by the properties of the Resultant (Lemma \ref{lem:res}), $\Res_y (h_i,h_j)$ is $s^{\BigO({d^3\log{n}})}$-sparse polynomial. Consequently, hitting all the pairwise resultants corresponds to hitting their product, which is a (somewhat) sparse polynomial. We handle this in a ``black-box'' fashion. That is, we iterate over all the points in a hitting set for (somewhat) sparse polynomials (for example using the hitting set of \cite{KlivansSpielman01}). 

\paragraph{Finding the ``right'' Partition}
As the projection we obtain is no longer required to maintain the same factorization ``pattern'', irreducible factors could split into ``pieces'' (i.e. further factorize upon projection) in a way that the same set of ``pieces'' can emerge from different polynomials. For example, consider the polynomials $f(y,x) = (y^2-x)y$ and $g = y(y-x)(y+x)$. These two polynomials have different factorization patterns. However observe that $f(y,1) = g(y,1) = y(y-1)(y+1)$.
While in both cases, the different ``pieces''  of the irreducible factors of $f$ and $g$ do not overlap (i.e. no nontrivial gcd), it is not clear how to group the pieces together to recover the factorization pattern of the original polynomial. I.e. just by examining the pieces, we cannot determine what the right partition of the set of factors of $f(y,1)$ and $g(y,1)$ should be.

We address this problem by recalling and taking advantage of the fact that a monic polynomial of degree $d$ can split into at most $d$ pieces! Therefore, the are at most $d^{\BigO(d)}$ possible partitions. We find the ``right'' partition by iterating over all of them till we find the right one.

\paragraph{Reconstructing the Factors}
As before, let $f \in \Fny$ be monic in $y$ and let $f(y,\xb) =  h_1^{e_1}(y,\xb)  \ldots  h_k^{e_k} (y,\xb)$.
Given a ``good'' projection $\ab$ and the ``right'' partition, we will show how to obtain oracle (i.e. ``black-box'') access the polynomials $h_1, \ldots h_k$. Once we can do this, as Theorem \ref{THM:Main1} provides us an upper bound on the sparsity of $h_i$-s, we can use a reconstruction algorithm for sparse polynomials to reconstruct $h_1, \ldots h_k$, given via an oracle access.

We obtain oracle access to $h_1, \ldots h_k$ by mirroring the factorization algorithm of \cite{KaltofenTrager90}. Given an input point $\bb \in \F^n$ at which we want to compute $h_1 (y, \bb), \ldots h_k(y, \bb)$, the algorithm uses $\ab$ as an anchor point and draws a line to $\bb$. We then obtain a problem of bi-variate factorization, which we know how to solve efficiently. The non-overlapping property of the ``pieces'' makes it possible to group the pieces together in the same consistent way for every choice of $\bb$. Once we can do this, this allows us to evaluate the individual factors at $\bb$. 

\paragraph{Testing the Purported Factors}
As was discussed earlier, given a polynomial $f$, the algorithm will proceeding by trying to reconstruct the factors of $f$ for every projection and every partition. Some of these projections and partitions will return valid factorizations of $f$ and some might return garbage. We need to prune out the garbage solutions, which we can do as follows: As each factor of $f$ is ``somewhat'' sparse 
(Theorem \ref{THM:Main1}) and there are at most $d$ of them, given a purported factorization, we can test if it is a good and valid factorization it by explicitly multiplying out the polynomials. 

Clearly, this algorithm will pick up \emph{any} valid factorization of $f$ (not just the irreducible one). We will select the irreducible factorization using the simple characterization given in Lemma \ref{lem:max fact}.

\paragraph{Factoring General Sparse Polynomials}
In order to the extend the above algorithm that works in the monic case to the more general case of non-monic polynomials, we use a standard reduction that transforms a general polynomial $f \in \Fn$ into a monic one $\hat{f}$. 

More formally, write:  $f = \sum \limits _{j=0}^k f_j \cdot x_{n}^j$ such that $f_k \nequiv 0$ and the $f_j$-s do not depend on $x_n$.
Consider the polynomial $\hat{f}(y, x_1, \ldots, x_{n-1}) =  
f_{k}^{k-1} \cdot f(x_1, \ldots ,x_{n-1}, \frac{y}{f_k}).$
We show that if $f$ is an $s$-sparse polynomial with individual degrees at most $d$, then $\hat{f}$ is an $(s^d)$-sparse polynomial, monic in $y$, with individual degrees at most $d^2$.

Finally, we show that $\hat{f}$ contains all the factors of $f$ that depend on $x_n$, while $f_k$ contains the remaining factors. We recover these remaining factors by recursively factoring $f_k$. Observe that $f_k$ depends on at most $n-1$ variables.

\paragraph{Organization of Paper}
In the next section, we recall some algebraic tools and algebraic algorithms that will be useful for us. 
%In Sec.~\ref{sec:sparse}, we discuss some properties of sparse polynomials and related algorithms. 
In Section~\ref{sec:Polytope}, we discuss properties of polytopes and their relation to factor sparsity. Section~\ref{sec:sparsity} contains the proof of the sparsity bound along with a discussion on its tightness.  
We present and analyze the deterministic factoring algorithm in Section~\ref{sec:factAlgo}.
We conclude with some open questions in Section~\ref{sec:open}.

\section{Preliminaries} 

\subsection{Algebraic Tool Kit}

\label{sec:prelim}

Let $\F$ denote a field, finite or otherwise, and let $\cF$ denote
its algebraic closure.  %We assume that elements of $\F$ are
% represented in binary using some standard encoding.  Moreover, we
% assume that there is an algorithm that given an integer $r$, outputs in
% time $\poly(r)$ a set of $r$ distinct elements in $\F$ (or an
% extension field of $\F$) each of which is represented in this encoding
% using $O(\log r)$ bits. 

\subsection{Polynomials}

A polynomial $f \in \Fn$ \emph{depends} on a variable
$x_i$ if there are two inputs $\bar\alpha, \bar\beta \in \bar{\F}^n$
differing only in the $i^{th}$ coordinate for which
$f(\bar\alpha) \neq f(\bar\beta)$.  We denote by $\var(f)$ the set of
variables that $f$ depends on. We say that $f$ is $g$ are \emph{similar} and denote by it $f \sim g$ if $f = \alpha g$ for some $\alpha \neq 0 \in \F$.

For a polynomial $f(x_1,\ldots,x_n)$, a variable $x_i$ and
a field element $\alpha$, we denote with $f \restrict{x_i = \alpha}$ the polynomial resulting from substituting $\alpha$ to
$x_i$. Similarly given a subset $I\subseteq [n]$ and an
assignment $\bar{a}$ $\in \F^{n}$, we define $f \restrict{\xb_I = \bar{a}_I}$ to be the polynomial resulting from substituting $a_i$
to $x_i$ for every $i \in I$.

\begin{definition}[Line]\label{def:line}
Given $\ab, \bb \in \F^n$ we define a \emph{line} passing through $\ab$ and $\bb$ as $\ell_{\ab,\bb} : \F \to \F^n$,
$\ell_{\ab,\bb}(t) \eqdef (1-t) \cdot \ab + t \cdot \bb$. In particular, $\ell_{\ab,\bb}(0) = \ab$
and $\ell_{\ab,\bb}(1) = \bb$.
\end{definition}

\begin{definition}[Degrees, Leading Coefficients]
\label{def:deg}
%The \emph{total degree} of $f$ is the largest total degree of a monomial in $f$.
%defined as the degree of $\lm(f)$. 
Let $x_i \in \var(f)$. We can write:  $f = \sum _{j=0}^d f_j \cdot x_i^j$ such that $\forall j: x_i \not \in \var(f_j)$
and $f_d \nequiv 0$. The \emph{leading coefficient} of $f$ w.r.t to $x_i$ is defined as $\lc_{x_i}(f) \eqdef f_d$.
The \emph{individual degree} of $x_i$ in $f$ is defined as $\deg_{x_i}(f) \eqdef d$. We say that $f$ is \emph{monic} in a variable $x_i$
if $\lc_{x_i}(f) = 1$. We say that $f$ is monic if it is monic in some variable. 
%and let $d$ be the degree of $x_i$ in $f$.
%$f = f_d \cdot x_i^d + f_{d-1} \cdot x_i^{d=1} + \ldots + f_1 \cdot x_i + f_0$
\end{definition}

It easy to see that for every $f,g \in \Fn$ and $i \in [n]$ it holds: 
%$\lm(f \cdot g) = \lm(f) \cdot \lm(g)$ and
$\lc_{x_i}(f \cdot g) = \lc_{x_i}(f) \cdot \lc_{x_i}(g)$.

\subsection{Factors and Divisibility}

Let $f,g \in \Fn$ be polynomials. We say that $g$ \emph{divides} $f$, or 
equivalently $g$ is a factor of $f$, and denote it by $g \divs f$
if there exists a polynomial $h \in \Fn$ such that $f = g \cdot h$.
We say that $f$ is \emph{irreducible} if $f$ is non-constant
and cannot be written as a product of two non-constant polynomials.
% For $e \in \N$, we say that $f$ is a \emph{perfect $e$-th power} 
% if there exists a polynomial $h \in \Fn$ such that $f = h^e$.
% Equivalently, we say that $h$ is $f$'s $e$-th root.

%Otherwise, we say that $f$ does not divide $Q$ 
%(or $Q$ is not divisible by $P$) and denote it
%by $P \ndivs Q$. 
Given the notion of divisibility, we define the gcd
of a set of polynomials in the natural way: we define it to be the highest degree polynomial dividing them all (suitably scaled)\footnote{Such a polynomial is unique up to scaling, and one can fix a canonical polynomial in this class for instance by requiring that the leading monomial has coefficient 1. With this definition, two polynomials are pairwise coprime if their gcd is of degree $0$, and in particular the gcd equals $1$.}. 
Given the notion of irreducibility we can state the important property of the uniqueness of factorization.

\begin{lemma}[Uniqueness of Factorization]
\label{lem:ufd}
Let $h_1^{e_1} \cdot \ldots \cdot h_k^{e_k} = g_1^{e'_1} \cdot \ldots \cdot g_{k'}^{e'_{k'}}$ be
two factorizations of the same non-zero polynomial into irreducible, pairwise coprime factors. Then $k=k'$
and there exists a permutation $\sigma:[k] \to [k]$ such that $h_i \sim g_{\sigma(i)}$ 
and $e_i = e'_{\sigma(i)}$ for $i \in [k]$.
\end{lemma}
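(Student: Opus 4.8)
The plan is to derive the statement from the standard fact that $\Fn$ is a unique factorization domain, whose essential consequence for us is that every irreducible polynomial is \emph{prime}: if $p$ is irreducible and $p \divs fg$ then $p \divs f$ or $p \divs g$. I would establish this by induction on $n$ via Gauss's lemma: $\F[x_1]$ is a Euclidean domain and hence a UFD, and if $R = \F[x_1,\ldots,x_{n-1}]$ is a UFD then so is $R[x_n] = \Fn$; in any UFD irreducibles are prime. (Alternatively one may simply cite this classical fact.) The only other ambient fact I need is that $\Fn$ is an integral domain, so cancellation of a nonzero polynomial is legitimate.

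Granting the prime property, I would first match the two lists of irreducibles up to similarity. Since $h_1$ is prime and $h_1 \divs g_1^{e'_1} \cdots g_{k'}^{e'_{k'}}$, it divides some $g_j$; as $g_j$ is irreducible and $h_1$ is non-constant, $h_1 \sim g_j$. This index $j$ is unique: if $h_1 \sim g_j$ and $h_1 \sim g_{j'}$ with $j \neq j'$ then $g_j \sim g_{j'}$, contradicting that the $g$'s are pairwise coprime. Define $\sigma(1) = j$, and obtain $\sigma(i)$ for every $i \in [k]$ the same way. The map $\sigma \colon [k] \to [k']$ is injective (if $\sigma(i) = \sigma(i')$ then $h_i \sim h_{i'}$, contradicting pairwise coprimality of the $h$'s), and running the identical argument with the two factorizations interchanged produces an injection $[k'] \to [k]$; hence $k = k'$ and $\sigma$ is a bijection.

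It remains to show $e_i = e'_{\sigma(i)}$ for all $i$. Writing $g_{\sigma(i)} = \alpha_i h_i$ with $\alpha_i \in \F \setminus \{0\}$ and substituting into the given identity yields $h_1^{e_1} \cdots h_k^{e_k} = \beta \cdot h_1^{e'_{\sigma(1)}} \cdots h_k^{e'_{\sigma(k)}}$ for a nonzero scalar $\beta$. Suppose some exponent differs, say (after relabeling) $e_1 > e'_{\sigma(1)}$. Cancelling $h_1^{e'_{\sigma(1)}}$ — legitimate since $\Fn$ is a domain — gives $h_1^{e_1 - e'_{\sigma(1)}} h_2^{e_2} \cdots h_k^{e_k} = \beta \cdot h_2^{e'_{\sigma(2)}} \cdots h_k^{e'_{\sigma(k)}}$. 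The left side is divisible by the prime $h_1$, so $h_1$ divides the right side and hence some $h_i$ with $i \geq 2$, forcing $h_1 \sim h_i$ — again contradicting pairwise coprimality. Thus $e_1 \leq e'_{\sigma(1)}$, and by symmetry $e_1 = e'_{\sigma(1)}$; cancelling $h_1^{e_1}$ and inducting on $k$ finishes the argument.

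The main obstacle is really the single imported ingredient — that irreducibles in $\Fn$ are prime, i.e.\ Gauss's lemma together with the inductive UFD argument. Once that is in hand, the matching of the factors and of the exponents is routine divisibility bookkeeping, made clean by the pairwise-coprimality hypothesis. If a shorter write-up is preferred, this ingredient is entirely standard and can simply be cited.
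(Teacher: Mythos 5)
Your proof is correct: it is the standard UFD argument (Gauss's lemma to get that $\Fn$ is a UFD, irreducibles are prime, match factors up to $\sim$ using pairwise coprimality, then cancel to equate exponents), and the paper itself states \cref{lem:ufd} without proof, treating it as exactly this classical fact. So your write-up fills in the standard citation-level argument; the only cosmetic remark is the trivial edge case $k=1$ in the exponent step, where after cancellation $h_1$ would have to divide a nonzero constant, which is an even more immediate contradiction.
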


Suppose that $f$ is monic in $x_i$. It is easy to see $f$ can be written as a product of monic factors. Therefore, we can specialize Lemma \ref{lem:ufd} to consider the \emph{unique monic factorization} of $f$ as: $f =  h_1^{e_1} \cdot \ldots \cdot h_k^{e_k}$
where $h_i$-s are irreducible, monic, pairwise coprime factors. 

The following lemma provides a characterization of all irreducible, pairwise coprime factorizations of any polynomial.

\begin{lemma}
\label{lem:max fact}
Consider the function $\Phi: \N^{*} \to \N$: given $\bar{e} = (e_1, \ldots, e_k)$, $\Phi(\bar{e}) \eqdef 2 \cdot \sum \limits_{i=1}^k e_i - k$. Let $f \in \Fn$ be a polynomial and let $f = h_1^{e_1} \cdot \ldots \cdot h_{k}^{e_{k}}$ a factorization of $f$ (not necessarily irreducible or coprime), where $h_i$-s are non-constant and $e_i \geq 1$. Then all irreducible, pairwise coprime factorizations of $f$ correspond to those that maximize $\Phi(\bar{e})$.
\end{lemma}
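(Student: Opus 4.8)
\textbf{Proof proposal for Lemma~\ref{lem:max fact}.}

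The plan is to show that the quantity $\Phi(\bar e)$ strictly increases whenever we take a factorization that is \emph{not} both irreducible and coprime and refine it one step, and that all irreducible coprime factorizations give the same value of $\Phi$; together with the fact that $\Phi$ is bounded above on the (finite) set of factorizations of $f$, this pins down the maximizers exactly as the irreducible coprime ones. First I would observe that if $f = h_1^{e_1}\cdots h_k^{e_k}$ is \emph{the} irreducible coprime factorization (unique by Lemma~\ref{lem:ufd}), then for any factorization $f = g_1^{a_1}\cdots g_m^{a_m}$ into non-constant $g_j$'s with $a_j \ge 1$, each $g_j$ is itself a product of powers of the $h_i$'s, say $g_j = \prod_i h_i^{c_{ij}}$ with $c_{ij} \ge 0$, and matching exponents gives $\sum_j a_j c_{ij} = e_i$ for every $i$. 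I would then compute $\Phi(\bar a) = 2\sum_j a_j - m = 2\sum_j a_j - \sum_j 1$, and relate $\sum_j a_j$ to $\sum_i e_i$ and $m$ to $k$ via these relations, showing $\Phi(\bar a) \le \Phi(\bar e)$ with equality iff the factorization is (up to reordering and scaling) the irreducible coprime one.

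The cleaner way to organize the inequality is via a \emph{refinement} argument. I would define a partial order on factorizations of $f$ where one factorization refines another if it is obtained by either (i) replacing a reducible factor $g^a$ with $(g_1 g_2)^a = g_1^a g_2^a$ (splitting a reducible factor), or (ii) replacing $g^a \cdot g^b$ (or more precisely two similar factors, or a common factor appearing in two blocks) by $g^{a+b}$ (merging non-coprime pieces), or more simply: given $f = g_1^{a_1}\cdots g_m^{a_m}$ not yet irreducible-coprime, either some $g_j$ is reducible, or some pair $g_j, g_{j'}$ share a non-trivial common factor. In the first case, writing $g_j = uv$ with $u,v$ non-constant, the new exponent vector replaces $a_j$ by the pair $(a_j, a_j)$, changing $\Phi$ by $2(a_j + a_j) - (a_j) \;-\; 1 = 2a_j - 1 \ge 1 > 0$ (here I'm using that the number of parts goes up by one and the sum of parts goes up by $a_j$). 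In the second case one extracts the common factor and re-groups; I would check (a short case analysis, the routine part I won't grind through here) that $\Phi$ again strictly increases. Since every factorization can be refined down to \emph{the} irreducible coprime one by finitely many such steps, and each step strictly increases $\Phi$, the irreducible coprime factorization is the unique maximizer of $\Phi$ among all refinements of $\bar e$ — and since every factorization of $f$ refines to the same irreducible coprime factorization, it is the global maximizer.

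The one subtlety — and the step I expect to be the main obstacle — is the merging/regrouping case: when the $g_j$'s are non-constant and distinct but not pairwise coprime, ``refining toward'' the irreducible factorization is not a single clean local move, because a shared irreducible factor $h$ may be distributed across several blocks with various multiplicities, and pulling it out changes several $a_j$'s and $c_{ij}$'s at once. To handle this cleanly I would instead avoid the step-by-step refinement in that case and argue directly with the relations $\sum_j a_j c_{ij} = e_i$: since each $g_j$ is non-constant, $\sum_i c_{ij} \ge 1$ for every $j$, so $m = \sum_j 1 \le \sum_j \sum_i c_{ij} = \sum_i \sum_j c_{ij} \le \sum_i \sum_j a_j c_{ij} = \sum_i e_i$ (using $a_j \ge 1$); and $\sum_j a_j \le \sum_j a_j \sum_i c_{ij} = \sum_i e_i$ as well, with equality in the latter iff every $g_j$ is irreducible (i.e. $\sum_i c_{ij} = 1$). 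Then $\Phi(\bar a) = 2\sum_j a_j - m$; I would bound this above by $2\sum_i e_i - k$ (need: $m \ge k$ whenever $\Phi(\bar a)$ is to be maximal — if the factorization is coprime then distinct $g_j$'s hit disjoint sets of $h_i$'s so $m \le k$, and one shows $m = k$ exactly when coprime and irreducible) and verify equality forces $\bar a = \bar e$ up to permutation. Assembling these inequalities carefully — tracking exactly when each becomes an equality — gives that $\Phi(\bar a) = \Phi(\bar e)$ iff $f = g_1^{a_1}\cdots g_m^{a_m}$ is an irreducible, pairwise coprime factorization, which is the claim.
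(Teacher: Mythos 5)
Your first route (the split/merge refinement) is essentially the paper's argument, but the step you flag as ``the main obstacle'' --- the non-coprime case --- is exactly where the paper has a short trick you are missing: one only needs to examine a factorization that \emph{maximizes} $\Phi$, and for such a factorization the splitting move already forces every factor to be irreducible; two non-coprime \emph{irreducible} factors are then scalar multiples of one another, so replacing $h_i^{e_i}\cdot h_j^{e_j}$ by the single factor $(\alpha^{e_i/(e_i+e_j)}\cdot h_j)^{e_i+e_j}$ is one clean local move that increases $\Phi$ by exactly $1$. Combined with the observation that, by Lemma~\ref{lem:ufd}, all irreducible pairwise coprime factorizations of $f$ give the same value of $\Phi$, this finishes the proof; no global refinement order, termination argument, or regrouping of a shared factor spread over several blocks is needed.

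Your fallback direct route has a genuine gap at the comparison step. Writing $g_j=\alpha_j\prod_i h_i^{c_{ij}}$, $t_j=\sum_i c_{ij}\ge 1$, $S_a=\sum_j a_j$, $S_e=\sum_i e_i$, the two inequalities you actually establish ($m\le S_e$ and $S_a\le S_e$) do not yield $2S_a-m\le 2S_e-k$, and the condition you say you need, $m\ge k$, is false in general: for $f=h_1h_2$ viewed as the single factor $g_1=h_1h_2$ one has $m=1<k=2$ (here $\Phi(\bar a)<\Phi(\bar e)$ only because $S_a<S_e$ compensates). The missing ingredient is that lumping several $h_i$'s into one $g_j$ costs at least as much in $S_a$ as it saves in the factor count: $k\le\sum_j t_j=m+\sum_j(t_j-1)\le m+\sum_j a_j(t_j-1)=m+(S_e-S_a)$. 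With this, $\Phi(\bar e)-\Phi(\bar a)=2(S_e-S_a)+(m-k)\ge S_e-S_a\ge 0$, and equality forces $S_a=S_e$ (so every $t_j=1$, i.e.\ every $g_j$ is irreducible) and then $m=k$, i.e.\ the irreducible $g_j$'s are pairwise non-similar and hence pairwise coprime. If you supply that inequality, your second route is correct and genuinely different from (indeed quantitatively stronger than) the paper's local-move argument; as written, however, the ``careful assembly'' you defer is precisely the missing step.
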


\begin{proof}
First, observe that by Uniqueness of Factorization, all the all irreducible, pairwise coprime factorizations of $f$ result in the same value of $\Phi(\bar{e})$. Next, we show that in the factorization that maximize $\Phi(\bar{e})$, all the $h_i$-s must be irreducible and coprime.
Assume the contrary. We have two possible cases:

\begin{itemize}
\item There exists $i$ such that $h_i$ is reducible. That is, $h_i$ can be written as $h_i = u_i \cdot v_i$, where $u_i, v_i$ are non-constant polynomials. Now, consider a different factorization of $f$ where we replace $h_i^{e_i}$ by $u_i^{e_i}$ and $v_i^{e_i}$. The value of $\Phi$ under the new factorization will increase by $2e_i - 1 \geq 1$. 

\item There exists $i$ and $j$ such that $h_i$ are $h_j$ are not coprime. We can assume w.l.o.g that both $h_i$ and $h_j$ are irreducible. Therefore, $h_i = \alpha \cdot h_j$ for some $\alpha \in \F.$ Consider a different factorization of $f$ where we replace $h_i^{e_i}$ and $h_j^{e_j}$ by a single factor: $(\alpha^{\frac{e_i}{e_i+e_j}} \cdot h_j)^{e_i+e_j}$. The value of $\Phi$ under the new factorization will increase by $1$.
\end{itemize}
\end{proof}

%\subsubsection{Factor of Monic Polynomial}

\subsection{Sparse Polynomials}
\label{sec:sparse}

In this section we discuss sparse polynomials, their properties and some related efficient algorithms which leverage 
these properties.

An \emph{$s$-sparse polynomial} is polynomial with at most $s$ (non-zero) monomials. 
We denote by $\mon{f}$ the \emph{sparsity} of $f$.
In this section we list several results related to sparse polynomials.
We begin with an efficient reconstruction algorithm for sparse polynomials.

% We begin with a corollary from \cite{ShpilkaVolkovich10} that
% shows that a sparse multilinear polynomial can be factored efficiently.
% Moreover, all its factors are sparse. 

% Equivalently, it is a polynomial computed by
% a depth-$2$, $\SP$ circuit of size at most $s$. Sparse polynomials were deeply
% studied (see e.g. \cite{Ben-OrTiwari88, KlivansSpielman01, LiptonVishnoi03}) and, in fact, several polynomial time algorithms for
% both reconstruction and PIT were given, over sufficiently large fields (or extension fields). 
% Several such results are known, see \cite{KlivansSpielman01} and references within.

% \begin{lemma}[\cite{ShpilkaVolkovich10}]
% \label{lem:factorize sparse}
% Given a multilinear polynomial $f \in \Fn$,
% there is a $\poly(n, \mon{f})$ time deterministic algorithm that 
% outputs the \emph{irreducible} factors, $h_{1}, \ldots , h_{k}$ of $f$.
% Furthermore, $\mon{h_1} \cdot \mon{h_2} \cdot \ldots \cdot \mon{h_k} = \mon{f}$.
% \end{lemma}

\begin{lemma}[\cite{KlivansSpielman01}]
\label{lem:rec}
Let $n,s,d \in \N$. There exists a deterministic algorithm that given $n,s,d$ and an oracle access to an $s$-sparse polynomial $f \in \Fn$ of degree $d$, uses $\poly(n,s,d,\log \size{\F})$ field operations and outputs $f$ (in its monomial representation). 
\end{lemma}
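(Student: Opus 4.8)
The final statement is Lemma~\ref{lem:rec}, the deterministic sparse-polynomial reconstruction result attributed to \cite{KlivansSpielman01}. Since this is a cited result rather than something the paper proves from scratch, my proposal is to recall the standard interpolation-via-evaluation approach and sketch how the deterministic version is obtained.

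\textbf{Proof proposal.} The plan is to reconstruct $f$ by evaluating it on a carefully chosen deterministic set of points and solving for the coefficients, using the fact that an $s$-sparse polynomial of degree $d$ is determined by a small amount of evaluation data. First I would reduce the multivariate problem to a univariate one via a Kronecker-type substitution: set $x_i \mapsto y^{(d+1)^{i-1}}$, which maps distinct monomials of individual degree at most $d$ (hence total degree at most $nd$, but more importantly each exponent below $d+1$) to distinct powers of $y$, turning $f$ into an $s$-sparse univariate polynomial $\tilde f(y)$ of degree at most $(d+1)^n$. The catch is that this degree is exponential, so one cannot naively interpolate; instead one works over a suitable extension and uses the structure of sparse univariate polynomials. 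The key classical tool is that an $s$-sparse univariate polynomial is uniquely recoverable from its evaluations at $1, \omega, \omega^2, \ldots, \omega^{2s-1}$ for an element $\omega$ of high enough multiplicative order, via the Berlekamp--Massey / Prony method: the evaluation sequence satisfies a linear recurrence of order $s$ whose characteristic polynomial has roots exactly the ``monomial values'' $\omega^{e}$, and from these one reads off the exponents $e$ by taking discrete logarithms.

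The main steps, in order, would be: (1) perform the Kronecker substitution to get a univariate sparse polynomial, tracking that sparsity is preserved and exponents stay bounded; (2) choose a deterministic $\omega$ — over $\F_{p^\ell}$ one moves to an extension field large enough to contain an element of order exceeding the largest exponent, and such an $\omega$ can be found deterministically in time $\poly(n,d,\log|\F|)$ (this is where the determinism is nontrivial but classical); over $\Q$ one picks a suitable integer or prime-power evaluation point; (3) evaluate $f$ (through the oracle, after undoing the substitution) at the $2s$ required points; (4) run Berlekamp--Massey to find the order-$\le s$ linear recurrence, factor its characteristic polynomial to get the monomial-values $\omega^{e_j}$, recover each exponent $e_j$ by a discrete-log computation (feasible because $\omega$ has controlled order and there are only $s$ of them), and then solve the resulting $s \times s$ (transposed Vandermonde) linear system for the coefficients; (5) invert the Kronecker map on each recovered exponent to get back the multivariate monomial. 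Each stage costs $\poly(n,s,d,\log|\F|)$ field operations.

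\textbf{Main obstacle.} The genuinely delicate point — and the reason this is a theorem rather than an exercise — is step (2): finding a good evaluation point \emph{deterministically}. Over large finite fields one must be careful that $\omega$ has large enough multiplicative order to separate all the $\le (d+1)^n$ possible exponents while still being constructible without randomness; \cite{KlivansSpielman01} handle this by passing to an extension field and exploiting that the number of ``bad'' evaluation points (those causing exponent collisions) is small, so a deterministic search over an explicit polynomial-size family succeeds. The discrete-logarithm recovery in step (4) also needs care over finite fields, but it is manageable because $\omega$'s order is kept polynomially bounded in the relevant parameters (or one works in a controlled cyclic subgroup). Since the present paper only \emph{invokes} this lemma as a black box, I would simply cite \cite{KlivansSpielman01} for the full argument and not reproduce these finite-field technicalities in detail.
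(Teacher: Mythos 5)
The paper does not actually prove this lemma; it is invoked as a black box with a citation to \cite{KlivansSpielman01}, so the only meaningful comparison is between your sketch and the technique of that cited work. Your route is the Ben-Or--Tiwari/Prony pipeline (Kronecker substitution, Berlekamp--Massey, root finding, discrete logarithms, transposed Vandermonde solve), which is \emph{not} the Klivans--Spielman approach: they construct a small explicit family of degree-\emph{reducing} substitutions $x_i \mapsto y^{a_i}$ with $a_i \le \poly(n,s,d)$ such that for every set of at most $s$ (or $s^2$) monomials at least one substitution is injective on it; the image is then a univariate polynomial of only polynomial degree, which is interpolated \emph{densely} from $\poly(n,s,d)$ evaluations, and the multivariate exponents are recovered combinatorially from several such maps --- no high-order elements, no Berlekamp--Massey, and no discrete logarithms are ever needed.

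This difference matters because your sketch has a genuine gap over finite fields, exactly at the point you flag as "manageable." After the Kronecker substitution the exponents range up to $(d+1)^n$, so any $\omega$ that separates monomials must have multiplicative order exceeding $(d+1)^n$, i.e.\ exponentially large; your later claim that "$\omega$'s order is kept polynomially bounded in the relevant parameters" contradicts this separation requirement. With an element of exponential order, step (4) requires computing discrete logarithms in a group of exponential size, which is not known to be doable in deterministic (or even randomized) polynomial time, so the stated $\poly(n,s,d,\log\size{\F})$ bound does not follow from your outline. There are ways to salvage Prony-type interpolation deterministically (e.g.\ working modulo many small primes in the exponents, or over fields with smooth structure), but they are substantially more involved; the clean way to obtain the lemma as stated, over an arbitrary field, is the Klivans--Spielman low-degree-substitution argument, which sidesteps both the high-order-element and the discrete-log obstacles. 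Since the paper only cites the result, simply attributing the lemma to \cite{KlivansSpielman01} (as you ultimately propose) is fine, but the proof sketch you give is not a correct justification of it as written.
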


In particular the above lemma shows the existence of an efficient hitting set for sparse polynomials. We now give a lemma that shows the existence of an efficient hitting set for a product of sparse polynomials. 
Indeed it was shown in~\cite{ShpilkaVolkovich09} that if there is an efficient hitting set for any class of polynomials, then one can construct an efficient hitting set for a product of few polynomials from that class. Thus we immediately get the following lemma. 

\begin{lemma}[\cite{KlivansSpielman01,ShpilkaVolkovich09,SarafVolkovich11}]
\label{lem:pit-sparse-ks}
There exists a deterministic algorithm that given $n,s,d,k \in \N$  outputs a set $\SP_{(n,s,d,k)}$ of size $\poly(n,s,d,k)$ such that any set of (at most) $k$ non-zero $s$-sparse polynomials $f_1, \ldots, f_k \in \Fn$ with individual degrees at most $d$  have a common non-zero in $\SP_{(n,s,d,k)}$.  In other words, there exists $\ab \in \SP_{(n,s,d,k)}$ such that $\forall i: f_i(\ab) \neq 0$.
\end{lemma}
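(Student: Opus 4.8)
The statement is essentially a ``known reduction'' result, so the plan is to assemble it from the two cited ingredients rather than build anything from scratch. First, by Lemma~\ref{lem:rec} (the Klivans--Spielman result), the class of $s$-sparse, degree-$d$ polynomials in $n$ variables admits a deterministic reconstruction algorithm, and in particular a deterministic \emph{hitting set} $H_{(n,s,d)}$ of size $\poly(n,s,d)$: any single nonzero $s$-sparse polynomial of degree at most $d$ has a nonzero in $H_{(n,s,d)}$. (One can extract such a hitting set directly from the reconstruction algorithm, or invoke the standard explicit Klivans--Spielman hitting set, which is what the citation points to.) This handles the $k=1$ case.

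Next I would invoke the generic amplification of Shpilka--Volkovich~\cite{ShpilkaVolkovich09}: if a class $\mathcal{C}$ of polynomials has an explicit hitting set of size $m$, then the class of products of $k$ polynomials from $\mathcal{C}$ has an explicit hitting set of size $\poly(m,k)$ (roughly, one takes a sufficiently generic combination / a small number of independent shifts of the original hitting set, exploiting that a product $f_1\cdots f_k$ vanishes at a point iff some $f_i$ does, and that the number of ``bad'' coordinates is controlled). Applying this with $\mathcal{C}$ equal to the class of $s$-sparse degree-$d$ polynomials in $n$ variables and $m = \poly(n,s,d)$ yields a set $\SP_{(n,s,d,k)}$ of size $\poly(n,s,d,k)$ such that for any nonzero $f_1,\dots,f_k$ in $\mathcal{C}$, the product $\prod_i f_i$ is nonzero somewhere on $\SP_{(n,s,d,k)}$; since a product of polynomials over an integral domain is zero exactly when one factor is zero, this point $\ab$ satisfies $f_i(\ab)\neq 0$ for all $i$ simultaneously. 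The role of \cite{SarafVolkovich11} in the citation is to supply the quantitatively clean / explicit version of this product-hitting-set construction, so I would cite it for the precise bound.

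The only mild subtlety worth spelling out is a degree bookkeeping point: a product of $k$ polynomials of individual degree at most $d$ has individual degree at most $kd$, so when feeding the product into any degree-dependent hitting-set bound one must track this — but since the generic reduction of~\cite{ShpilkaVolkovich09,SarafVolkovich11} is stated exactly for products and already absorbs this into its $\poly(\cdot,k)$ dependence, no extra work is needed. I do not expect a genuine obstacle here; the ``hard part'' is purely expository, namely citing the black-box amplification in the right form so that the output size is $\poly(n,s,d,k)$ and the guaranteed point is common to all $k$ polynomials. Hence the proof is a two-line deduction: (i) sparse polynomials have an explicit hitting set by~\cite{KlivansSpielman01}; (ii) apply the product-hitting-set transformation of~\cite{ShpilkaVolkovich09,SarafVolkovich11}.
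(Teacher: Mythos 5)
Your proposal is correct and follows essentially the same route as the paper: the paper also obtains the lemma by combining the Klivans--Spielman hitting set for sparse polynomials with the Shpilka--Volkovich (and Saraf--Volkovich) transformation that turns a hitting set for a class into one for products of few polynomials from that class, using that a common nonzero for $f_1,\dots,f_k$ is exactly a nonzero of the product. Nothing further is needed.
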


Note that in the above lemma, we could have replaced individual degree by total degree, and the result would have still held, since the total degree is at most a factor of $n$ more than the individual degree. However, in our applications, we will usually use an individual degree bound, and hence we stated the lemma in terms of individual degree.

% As a corollary we obtain an efficient algorithm for testing identity and, more generally, similarity between sparse polynomials.
% We leave the proof of the corollary as an easy exercise for the reader.

% \begin{corollary}
% \label{cor:sparse sim}
% Let $f,g \in \Fn$ be $s$-sparse polynomials of degree at most $d$.
% Then there exists an algorithm that given $f,g$ uses $\poly(n,d,s,\log \size{\F})$ field operations
% and tests if $f \sim g$. If yes, the algorithm also outputs $\alpha \in \F$ such that
% $f = \alpha g$. 
% \end{corollary}

As another simple corollary of \cite{KlivansSpielman01}, we obtain an efficient algorithm for sparse polynomial division, given an upper bound
on the sparsity of the quotient polynomial. In other words, if $f, g$ are sparse polynomials such that $f = g\cdot h$, then given black-box access to $f$ and $g$, one can recover $h$ (as long as it is also sparse). This is because given black-box access to $f$ and $g$, one can simulate black-box access to $h$. One can then use~\cite{KlivansSpielman01} to interpolate and recover $h$. If $h$ ends up being not sparse, then this algorithm would just reject. Moreover, given a candidate sparse polynomial $h$, it is easy to verify whether it is indeed the quotient polynomial of $f$ and $g$, but just multiplying out $h\cdot g$ and comparing with $f$.

\begin{lemma}[Corollary of \cite{KlivansSpielman01}]
\label{lem:sparse division}
Let $n,s,d,t \in \N$.
Let $f,g \in \Fn$ be $s$-sparse polynomials of degree at most $d$.
Then there exists an algorithm that given $f,g$ and $t$ uses $\poly(n,d,s,t)$ field operations
and computes the quotient polynomial of $f$ and $g$, if it is a $t$-sparse polynomial.
That is, if $f = g h$ for some $h \in \Fn$, $\mon{h} \leq t$, then the algorithm outputs $h$.
Otherwise, the algorithm rejects.
\end{lemma}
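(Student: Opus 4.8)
The plan is to derive this as a direct corollary of the deterministic sparse interpolation algorithm (Lemma~\ref{lem:rec}) together with the observation that black-box access to $f$ and $g$ yields black-box access to their quotient, provided the quotient is a genuine polynomial. First I would describe how to simulate oracle access to the candidate quotient $h$. Given an evaluation point $\bar{a} \in \F^n$ (or, to be safe, over a suitable extension $\F'$ of $\F$ of size $\poly(n,d,s,t)$ so that Lemma~\ref{lem:rec}'s interpolation points are available), compute $f(\bar a)$ and $g(\bar a)$. If $g(\bar a) \neq 0$, return $f(\bar a)/g(\bar a)$; if $g(\bar a) = 0$ the naive quotient is undefined at that point, so this is the one place requiring care.

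To handle the zeros of $g$, I would pass to a line: pick a generic direction, or rather use the standard trick of replacing each variable $x_i$ by $x_i + \alpha_i t$ for a fresh variable $t$ and a carefully chosen shift $\bar\alpha$, so that $g$ restricted to the resulting line in the $x_i$'s becomes a nonzero univariate polynomial in an auxiliary variable, and then one can evaluate $h$ at $\bar a$ by univariate polynomial division along that line. Alternatively — and this is cleaner — since we only care about the case where $h$ is $t$-sparse and has degree at most $d$, we can simply run the interpolation algorithm of Lemma~\ref{lem:rec} using only evaluation points where $g$ does not vanish: the hitting-set machinery of Lemma~\ref{lem:pit-sparse-ks} guarantees we can find, within a set of size $\poly(n,s,d,t)$, points avoiding the zero set of $g$, and the interpolation scheme of~\cite{KlivansSpielman01} is flexible enough to work over such a grid (or one works over a large enough field extension where $g$'s vanishing is negligible). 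In all cases the point count and field-operation count stay $\poly(n,d,s,t)$.

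Next I would run the $t$-sparse, degree-$d$ interpolation algorithm with this simulated oracle, obtaining a candidate polynomial $\tilde h$ in explicit monomial form with at most $t$ monomials. Finally, verification: explicitly multiply $\tilde h \cdot g$ — this is a product of a $t$-sparse and an $s$-sparse polynomial, hence computable in $\poly(n,d,s,t)$ time and of sparsity at most $st$ — and compare monomial-by-monomial with $f$. If they agree, output $\tilde h = h$ (uniqueness of the quotient when it exists follows since $\F[x_1,\dots,x_n]$ is an integral domain); otherwise reject. If the true quotient $h$ exists and is $t$-sparse, the interpolation algorithm is guaranteed to recover it and verification passes; if no such $h$ exists, or $h$ is not $t$-sparse, then either interpolation fails outright or the verification step catches it, so the algorithm correctly rejects.

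The main obstacle is the division-by-zero issue: making sure that simulating the oracle for $h$ at enough points is legitimate even though $g$ may vanish on part of the evaluation grid. I expect this to be handled by one of the two standard devices above — either restricting attention to a hitting set for $g$ (so we never divide by zero) or working over a sufficiently large field extension — and in either case the overhead is only polynomial, so the stated $\poly(n,d,s,t)$ bound is maintained. Everything else is routine bookkeeping on sparsity and degree blow-ups, all of which stay polynomial.
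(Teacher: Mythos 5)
Your proposal is correct and follows essentially the same route the paper takes: simulate black-box access to the quotient from evaluations of $f$ and $g$, reconstruct it with the sparse interpolation algorithm of Lemma~\ref{lem:rec} (\cite{KlivansSpielman01}), and then verify the candidate by explicitly multiplying it with $g$ and comparing to $f$, rejecting otherwise. In fact you are more careful than the paper, which simply asserts that black-box access to $h$ can be simulated without discussing the points where $g$ vanishes; your hitting-set/line-restriction workaround addresses exactly that glossed-over detail.
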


\newpage

\subsection{GCD and Resultants}
\label{sec:res}

Let  $f = a_d y^d + a_{d-1} y^{d-1} + \cdots + a_0$ and $g = b_e y^e + b_{e-1} y^{e-1} + \cdots + b_0$ be polynomials of $y$-degree exactly $d$ and $e$, respectively. Consider the $(d+e)\times (d+e)$ Sylvester Matrix whose first $e$ columns contain $e$ shifts of the vector of coefficients $(a_d,\ldots,a_0,0,\ldots,0)$, and next $d$ columns contain $d$ shifts of the vector of coefficients $(b_e,\ldots,b_0,0,\ldots,0)$. That is,

\begin{frame}
\footnotesize
\arraycolsep=1pt % default: 5pt
\medmuskip = 0.5mu % default: 4mu plus 2mu minus 4mu
\[
\Res_y(f,g) \,=\, \left|
\begin{array}{cccccccc}
a_d & 0 & \cdots & 0 & b_e & 0 & \cdots & 0 \\
a_{d-1} & a_d & \cdots & 0 & b_{e-1} & b_{e} & \cdots & 0 \\
a_{d-2} & a_{d-1} & \ddots & 0 & b_{e-2} & b_{e-1} & \ddots & 0 \\
\vdots & \vdots & \ddots & a_d & \vdots & \vdots & \ddots & b_e \\
\vdots & \vdots & \cdots & a_{d-1} & \vdots & \vdots & \cdots & b_{e-1} \\
a_0 & a_1 & \cdots & \vdots & b_0 & b_1 & \cdots & \vdots \\
0 & a_0 & \ddots & \vdots & 0 & b_0 & \ddots & \vdots \\
\vdots & \vdots & \ddots & a_1 & \vdots & \vdots & \ddots & b_1 \\
0 & 0 & \cdots & a_0 & 0 & 0 & \cdots & b_0 \\
\end{array}
\right|_{(d+e) \times (d+e)} \,
\]
\end{frame} 
\noindent This representation of resultant ensures that if $f$ and $g$ are sparse polynomials in $\Fn$ with small individual degree (in $y$), then sparsity of  $\Res_y(f,g)$ is bounded. We will use the following properties of resultant (For more info, see \cite[Chap.~7]{CGL92})  %Not sure if Sud is the best source.. 

\begin{lemma}[Resultant Properties]
\label{lem:res}
Let $f,g \in \Fny$ be monic in $y$, $s$-sparse polynomial with individual degrees at most $d$. Then:

\begin{enumerate}
\label{P1}
\item $\Res_y(f,g)(\xb)$ is an $ (2ds)^{2d}$-sparse polynomial over $\Fn$ with individual degrees at most $2d^2$.

\item For every $\ab \in \F^n$:
$\Res_y \left( f \restrict{\xb = \ab}  ,g \restrict{\xb = \ab} \right) = \Res_y (f,g) (\ab)$.

\item $\gcd(f,g) \neq 1$ iff $\Res_y(f,g) \equiv 0$.
\label{P3}

% \item For $a,b \in \N$: $\Res_y(f^a,g^b) = \Res_y(f,g)^{ab}$.
% \label{P4}
\end{enumerate}
\end{lemma}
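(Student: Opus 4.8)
I would establish the three properties in order, treating them largely independently.

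For Property 1, the plan is to expand $\Res_y(f,g)$ via the Leibniz formula for the determinant of the Sylvester matrix. Since $f$ and $g$ are monic in $y$ with individual degrees at most $d$, their $y$-degrees are at most $d$, so the Sylvester matrix has dimension at most $2d \times 2d$. Each of its entries is either $0$ or a coefficient of $f$ or of $g$ regarded as a polynomial in $y$ over $\Fn$, and each such coefficient is an $s$-sparse polynomial in $\Fn$ of individual degree at most $d$. Expanding the determinant as a signed sum of at most $(2d)!$ terms, each term a product of at most $2d$ such entries, each term is a polynomial that is $s^{2d}$-sparse with individual degree at most $2d \cdot d = 2d^2$. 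Summing over the $(2d)!$ terms gives that $\Res_y(f,g)(\xb)$ is $(2d)! \cdot s^{2d} \le (2ds)^{2d}$-sparse with individual degree at most $2d^2$. This step is pure bookkeeping.

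For Property 2, the key point is that the specialization map $\sigma_{\ab}\colon \Fn \to \F$, $\xb \mapsto \ab$, is a ring homomorphism, and determinants commute with ring homomorphisms applied entrywise. Because $f$ and $g$ are \emph{monic} in $y$, we have $\deg_y(f\restrict{\xb=\ab}) = \deg_y(f)$ and $\deg_y(g\restrict{\xb=\ab}) = \deg_y(g)$: the leading coefficient, being $1$, cannot vanish under $\sigma_{\ab}$. Hence the Sylvester matrix of the pair $(f\restrict{\xb=\ab}, g\restrict{\xb=\ab})$ has the same shape as that of $(f,g)$ and is obtained from it by applying $\sigma_{\ab}$ to every entry; taking determinants yields $\Res_y(f\restrict{\xb=\ab}, g\restrict{\xb=\ab}) = \sigma_{\ab}(\Res_y(f,g)) = \Res_y(f,g)(\ab)$. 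I would flag that the monic hypothesis is essential here, since otherwise the $y$-degree could drop under specialization and the identity would hold only up to a power of the vanishing leading coefficient.

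For Property 3, the plan is to pass to the rational function field $K = \F(\xb)$ and invoke the classical characterization of the resultant over a field (as in [CGL92, Chap.~7]): for $f, g \in K[y]$ of positive $y$-degree with nonvanishing leading coefficients, $\Res_y(f,g) = 0$ in $K$ iff $f$ and $g$ share a common factor of positive $y$-degree in $K[y]$ (the degenerate case $\deg_y f = 0$ or $\deg_y g = 0$ forces the polynomial to equal $1$ and makes both sides trivially consistent). Since $\Res_y(f,g) \in \Fn \subseteq K$, its vanishing in $K$ is the same as $\Res_y(f,g) \equiv 0$ as a polynomial. To translate "common factor of positive $y$-degree in $K[y]$" into "$\gcd(f,g) \neq 1$ in $\Fny$", I would invoke Gauss's lemma: $f$ and $g$ are primitive (indeed monic) in $\Fn[y]$, so they share a factor of positive $y$-degree over $K$ iff they share one over $\Fn$, i.e. in $\Fny$. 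Finally, any \emph{non-constant} common factor $h$ in $\Fny$ must have positive $y$-degree: from $h \divs f$ and multiplicativity of $\lc_y$, $\lc_y(h)$ divides $\lc_y(f) = 1$, so $\lc_y(h) \in \F^{*}$, and if $\deg_y(h) = 0$ then $h = \lc_y(h)$ would be a nonzero constant; conversely a common factor of positive $y$-degree is non-constant. Chaining these equivalences gives $\gcd(f,g) \neq 1 \iff \Res_y(f,g) \equiv 0$.

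I expect Property 3 to be the only real obstacle: one must be careful about which ring "common factor" and "gcd" are interpreted in, and must use the monic hypothesis both to invoke Gauss's lemma (primitivity) and to rule out degree-zero common factors. Properties 1 and 2 are routine, the sole subtlety being the use of monicity in Property 2 to ensure $y$-degrees are preserved under specialization.
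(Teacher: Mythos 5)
Your proposal is correct, and it follows the same route the paper intends: the paper itself gives no proof of this lemma, relying on the Sylvester-determinant representation (which it displays explicitly to justify the sparsity claim of Property 1) and citing [CGL92, Chap.~7] for the remaining classical properties. Your Leibniz-expansion bookkeeping for Property 1, the entrywise-specialization/homomorphism argument (using monicity to preserve $y$-degrees) for Property 2, and the passage to $\F(\xb)[y]$ with Gauss's lemma and the monicity argument ruling out $y$-degree-zero common factors for Property 3 are exactly the standard details the citation is standing in for, and they are carried out correctly.
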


\begin{definition}
For a field $\F$ we denote by $\cost_{\F}(d)$ the time of the best known algorithm that factors a univariate polynomial of degree $d$ over $\F$.
\end{definition}

\begin{lemma}[Univariate factoring] % have to be more precise here
\label{lem:uni}
Let $f(x)\in \F[x]$ be a univariate polynomial of  degree $d$ then by the well known algorithms of Lenstra-Lenstra-Lovasz \cite{LLL82} and Berlekamp \cite{Berlekamp70, Shoup91,GathenGerhard99,GaoKL04}, $f$ can be factorized in time $\cost_{\F}(d)$. 
where: 
\begin{enumerate}
\item $\cost_{\F}(d) = \poly(\ell \cdot p,d)$, if $\F=\F_{p^\ell}$.
\item $\cost_{\F}(d) = \poly(d, t)$,  where $t$ is maximum bit-complexity of the coefficients of $f$, if $\F=\mathbb{Q}$.
\end{enumerate}
\end{lemma}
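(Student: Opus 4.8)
The statement is in essence a pointer to two classical univariate factorization procedures, so the plan is to recall which algorithm is invoked in each case and to track its running time against the claimed bound $\cost_{\F}(d)$.

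Over $\F = \mathbb{Q}$, I would first invoke Gauss's lemma to reduce to factoring a primitive polynomial $\tilde f \in \Z[x]$ of degree $d$ whose coefficients have bit-length $\poly(d,t)$, and then run the Lenstra--Lenstra--Lov\'asz algorithm \cite{LLL82}: Hensel-lift a factorization of $\tilde f$ modulo a suitably large prime power (Mignotte's bound shows that $\poly(d,t)$ bits suffice, since the coefficients of any true integer factor are bounded in terms of $d$ and the height of $\tilde f$), and then recover each genuine irreducible factor as a short vector in an $\BigO(d)$-dimensional lattice with $\poly(d,t)$-bit entries, produced by the LLL basis-reduction step. Every step is $\poly(d,t)$ arithmetic operations on $\poly(d,t)$-bit integers, which gives $\cost_{\F}(d) = \poly(d,t)$.

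Over $\F = \F_{p^\ell}$, I would run Berlekamp's algorithm \cite{Berlekamp70}: compute the square-free decomposition of $f$ using $\gcd$'s with the derivative and, in characteristic $p$, $p$-th root extractions; for each square-free piece build the Berlekamp matrix $Q - I$, where $Q$ represents the Frobenius map $x \mapsto x^{p^\ell}$ on $\F[x]/(f)$, compute its kernel by Gaussian elimination, and split using $\gcd(f, v - c)$ over kernel elements $v$ and scalars $c \in \F_{p^\ell}$. Each of these subroutines costs $\poly(d, \ell, \log p)$ except the enumeration over $c \in \F_{p^\ell}$, i.e.\ the deterministic root-extraction step inside the prime field, which costs $\poly(\ell, p)$; collecting the estimates yields $\cost_{\F}(d) = \poly(\ell p, d)$. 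I expect the only conceptual obstacle to be precisely this last point: with current knowledge there is no deterministic $\poly(\log p)$ root-finding algorithm over $\F_p$, so the $\poly(p)$ factor is unavoidable in the deterministic regime (a randomized variant would replace it by random sampling and give $\poly(d,\ell,\log p)$). The remainder is a routine verification of textbook subroutines, so I would keep it brief and defer the detailed complexity analyses to \cite{Shoup91, GathenGerhard99, GaoKL04}.
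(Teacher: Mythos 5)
The paper itself gives no proof of this lemma: it is stated as a recollection of known results, with the citations to \cite{LLL82} and \cite{Berlekamp70,Shoup91,GathenGerhard99,GaoKL04} serving as the entire justification. Your expansion of the $\F=\Q$ case is correct and standard: Gauss's lemma, Hensel lifting modulo a prime power sized via Mignotte's bound, and LLL basis reduction, all in $\poly(d,t)$ bit operations.

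In the finite-field case, however, the algorithm as you wrote it does not achieve the claimed bound. You take $Q$ to be the matrix of the $q$-power Frobenius $x\mapsto x^{q}$ with $q=p^{\ell}$ (the $\F_{q}$-linear Berlekamp subalgebra) and then split by computing $\gcd(f,v-c)$ for all $c\in\F_{p^\ell}$; that enumeration has $p^{\ell}$ iterations, which is exponential in $\ell$ and not $\poly(\ell\cdot p,d)$ (take $p=2$ and $\ell$ large). Your parenthetical remark that the expensive step is ``root extraction inside the prime field, costing $\poly(\ell,p)$'' is the right intuition, but it is inconsistent with the algorithm you actually describe. The standard fix is to work with the absolute (prime-field) Berlekamp subalgebra: compute the kernel of the $\F_p$-linear map $v\mapsto v^{p}-v$ on $\F_{q}[x]/(f)$, viewed as an $\F_p$-vector space of dimension $d\ell$. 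Kernel elements reduce to elements of $\F_p$ modulo each irreducible factor, so the splitting gcds only range over $c\in\F_p$, i.e.\ $p$ values, and every step (square-free decomposition, computing the Frobenius matrix by repeated squaring, Gaussian elimination over $\F_p$, and the gcd splits) runs in $\poly(d,\ell,p)$ field/bit operations, matching $\cost_{\F}(d)=\poly(\ell\cdot p,d)$. With that substitution your argument is complete; alternatively you could simply defer to the cited deterministic algorithms of \cite{Berlekamp70,Shoup91}, which is all the paper does.
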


The next result which is implicit in many factorization algorithms, exhibits an efficient
factorization algorithm for certain regime of parameters. In particular, for polynomials with
constantly-many variables and a polynomial degree.  

\begin{lemma}[Implicit in \cite{Kaltofen89}, see also~\cite{Sudan98}]
There exists a deterministic algorithm that given a $r$-variate,
degree $d$ polynomial $f$ over $\F$ outputs its irreducible factors. The runtime of the algorithm is 
$(\cost_{\F}(d))^{\BigO(r)}$.
\end{lemma}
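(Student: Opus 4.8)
The plan is to reduce the $r$-variate problem to a single univariate factorization---handled by \cref{lem:uni} in time $\cost_\F(d)$---then to lift the resulting factorization back to $r$ variables by Hensel's lemma, and to brute-force over a grid of substitutions large enough that at least one of them preserves the factorization pattern. Because $r$ is treated as small, this grid has only $(\cost_\F(d))^{\BigO(r)}$ points, which is the whole source of the running time.

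\textbf{Preprocessing.} First I would reduce to the case where $f$ is squarefree and monic in $x_1$. Squarefreeness comes from a derivative-based squarefree decomposition; in characteristic $p>0$ one additionally strips inseparable parts by extracting $p$-th roots of coefficients (the fields in the statement are perfect) and by permuting variables so that some partial derivative is nonzero. Monicity, with $\deg_{x_1} f$ equal to the total degree of $f$, is obtained by a linear change of variables $x_i\mapsto x_i+c_i x_1$: the bad choices of $(c_i)_{i\ge 2}$ are the zero set of a polynomial of degree $\le d$, so a good one is found by scanning a grid of size $\poly(d)^{r-1}$. All of this preserves the factorization pattern and inflates degrees and running time by only $\poly(d)$ factors. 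By multiplicativity of the leading coefficient ($\lc_{x_i}(fg)=\lc_{x_i}(f)\lc_{x_i}(g)$), every irreducible factor of this $f$ then has a constant $\lc_{x_1}$ and may be normalized to be monic in $x_1$ as well.

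\textbf{Reduction, Hensel lifting, and search.} Fix $S\subseteq\F$ (passing to an extension of $\F$ of degree $\BigO(\log d)$ if $\F$ is too small) with $|S|=\poly(d)$ large enough to be a hitting set for $(r-1)$-variate polynomials of degree $\poly(d)$, and enumerate $\gamma=(\gamma_2,\dots,\gamma_r)\in S^{r-1}$. For each $\gamma$, let $I_\gamma=(x_2-\gamma_2,\dots,x_r-\gamma_r)$, form $f_\gamma(x_1)=f(x_1,\gamma_2,\dots,\gamma_r)$, and discard $\gamma$ unless $f_\gamma$ is squarefree. Factor $f_\gamma=\prod_j u_j(x_1)$ into monic irreducibles using \cref{lem:uni} (cost $\cost_\F(d)$). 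The $u_j$ are pairwise coprime, so by Hensel's lemma over the complete local ring $\F[[x_2-\gamma_2,\dots,x_r-\gamma_r]]$ (residue field $\F$) the factorization $f_\gamma=\prod_j u_j$ of the monic polynomial $f$ modulo $I_\gamma$ lifts \emph{uniquely} to a factorization over $\F[[x_2-\gamma_2,\dots,x_r-\gamma_r]][x_1]$. I would compute this lift to $I_\gamma$-adic order $d+1$ (finitely many steps, each solving a linear system over $\F$ in the $\poly(d)^{\BigO(r)}$ coefficients of total degree $\le d$), truncate, read off polynomials $v_1,\dots,v_m$ of total degree $\le d$, multiply them out, and keep the factorization $f=\prod_j v_j$ only if this identity genuinely holds. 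Finally, among all retained factorizations (over all $\gamma$), output the one maximizing the functional $\Phi$ of \cref{lem:max fact}.

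\textbf{Correctness and the main obstacle.} The single ingredient that makes this work---the content that is ``implicit in \cite{Kaltofen89}''---is the \emph{effective Hilbert irreducibility theorem}: writing $f=\prod_j f_j$ for the irreducible factorization, there is a nonzero polynomial $H$ in the substitution variables, of degree $\poly(d)$, such that for every $\gamma$ with $H(\gamma)\neq 0$ the specialization is ``parallel''---each $f_j(x_1,\gamma)$ stays irreducible and $f_\gamma$ stays squarefree. We never need $H$ explicitly: since $\deg H=\poly(d)$ and $|S|>\deg H$, some $\gamma^\star\in S^{r-1}$ lies off the zero set of $H$ (together with the similarly low-degree leading-coefficient and discriminant conditions). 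For that $\gamma^\star$ the $u_j$ are, up to reordering, exactly the $f_j(x_1,\gamma^\star)$; since the normalized $f_j$ are genuine polynomial factors of $f$ reducing to these modulo $I_{\gamma^\star}$, uniqueness of the Hensel lift forces the lifted factors to equal the $f_j$, which---having total degree $\le d$---are recovered exactly after truncation at order $d+1$, so the multiply-out test passes. Hence at least one enumerated $\gamma$ yields the true irreducible factorization, and by \cref{lem:max fact} it is precisely the retained factorization maximizing $\Phi$; wrong (non-parallel) substitutions are caught by the verification step. The running time is $|S|^{r-1}=\poly(d)^{\BigO(r)}$ times the cost $\cost_\F(d)+\poly(d)^{\BigO(r)}$ of one univariate factorization plus one lift-and-verify, i.e.\ $(\cost_\F(d))^{\BigO(r)}$. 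The main difficulty is, as always in deterministic factoring, the derandomization of Hilbert irreducibility; it is affordable here only because $r$ is small, so a grid hitting set of size $\poly(d)^{r-1}$ is within budget, and because a bad substitution need only be \emph{detected} a posteriori rather than excluded in advance.
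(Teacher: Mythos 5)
The paper does not prove this lemma at all---it imports it from \cite{Kaltofen89} (see also \cite{Sudan98})---so the only question is whether your sketch is a valid proof, and I don't think it is. The step your whole argument leans on, the ``effective Hilbert irreducibility theorem'' in the form you state it, is false: when you specialize \emph{all} variables except $x_1$ to constants, the set of specializations that destroy irreducibility of a factor is in general \emph{not} contained in the zero set of any nonzero polynomial $H$ of degree $\poly(d)$. Already $f_1(x_1,x_2)=x_1^2-x_2$ is a counterexample: over $\Q$ the bad set contains all rational squares (Zariski dense in the line), and over $\F_q$ it is about half the field. Worse, over a finite field a good point may not exist at all: take $f=(x_1^2-x_2)(x_1^2-ax_2)$ with $a$ a non-square in $\F_q$; for every $\gamma\in\F_q$ either $f(x_1,\gamma)$ is non-squarefree ($\gamma=0$) or exactly one of the two factors splits, so no specialization is ``parallel''. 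In that situation your algorithm's soundness check works (bad $\gamma$'s are indeed caught, since lifting an irreducible piece of a true factor individually gives a genuine power series, not a polynomial), but completeness fails: the true irreducible factorization is never retained for any $\gamma$ in your grid, and maximizing $\Phi$ over an empty or deficient list of candidates cannot repair this. This is precisely why classical deterministic reductions (Kaltofen, and the exposition in Sudan's notes) do \emph{not} specialize down to one variable and hope irreducibility survives: they either keep a second variable alive (reduce to the \emph{bivariate} case, where Hensel lifting is combined with a linear-algebra recombination step that does not require irreducibility to be preserved), or they explicitly search over groupings of the lifted pieces. Note that the paper itself is organized around exactly this obstacle: its Algorithm~2 restricts $f$ to a line $\ell_{\ab,\bb}(t)$, i.e.\ invokes the present lemma only with $r=2$, and it only demands that distinct factors stay \emph{coprime} after projection (a resultant condition, which \emph{is} a low-degree hypersurface condition), while the loss of irreducibility is handled by enumerating all partitions of the specialized factors---the ingredient your sketch omits.

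A secondary, smaller issue: when $\F$ is too small you propose passing to an extension field, but irreducible factors over the extension are not in general factors over $\F$, so you would additionally need a descent (e.g.\ norms or conjugate recombination) to return factors in $\F[x_1,\dots,x_r]$; as written the output may live in the wrong ring.
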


We will use this lemma for $r=2$ (i.e. bivariate factoring) in our deterministic factorization algorithm.

\section{Polytopes and Polynomials} 
\label{sec:Polytope}

In this section we will discuss various properties of polytopes, in particular the Newton polytope. These will be crucial ingredients in our proof of the sparsity bound for factors of sparse polynomials. The main results that we will discuss and develop are:
\begin{enumerate}
\item If $f,g,h$ are polynomials such that $f = g \cdot h$ then the sparsity of $f$ is lower bounded by $\max\set{\size{V(P_g)}, \size{V(P_h)}}$, where $P_g$ and $P_h$ are the Newton polytopes of $g$ and $h$ respectively, and where for a polytope $P$, $V(P)$ denotes the set of vertices of $P$. 
\item The convex hull of any subset of ${\set{0,1,\ldots, d} }^n$ must have ``many'' vertices (i.e. corner points). We will prove this as a corollary of an approximate version of Carath\'eodory's theorem. 
\end{enumerate}

Our approach to bounding the sparsity of factors of a polynomial using the theory of polytopes, and in particular Item 1 (as stated above) was inspired by a connection of the theory of polytopes to sparsity bounds that was observed by Dvir and Oliveira \cite{DvirOliveira14}. 
 
For a finite set of points $v_1, v_2, \ldots, v_k \in \R^n$, their \emph{convex span}, which we denote by $CS(v_1, \ldots, v_k)$ is the set defined by
$$CS(v_1, v_2, \ldots, v_k)= \condset{  \sum_{i= 1}^k \lambda_iv_i \,} { \, \lambda_i \geq 0 \mbox{ and } \sum_{i=1}^k \lambda_i = 1}.$$

A set $P \subseteq \R^n$ is a called a \emph{polytope} if there is a finite set of points $v_1, v_2, \ldots, v_k \in \R^n$ such that $P = CS(v_1, v_2, \ldots, v_k)$.
For a polytope $P$, and a point $a \in P$, we say that $a$ is a \emph{vertex} of $P$ if it \textbf{cannot} be written as $a = \lambda u + (1-\lambda) v$ for any $u, v \in P \setminus \{a\}$ and $\lambda \in [0,1]$. Alternatively, a vertex of $P$ is face of dimension $0$. We let $V(P)$ denote the set of vertices of $P$. 

It is an easy to verify, and a basic fact about polytopes, that if $P $ is a polytope, then $P = CS(V(P))$. Moreover, is $P = CS(v_1, v_2, \ldots, v_k)$ then $V(P) \subseteq \set{v_1, v_2, \ldots, v_k}$. 

(For more details see \cite{Ziegler12} Propositions 2.2 and 2.3)

\newpage

\subsection{The Newton Polytope and Minkowski Sum }

\begin{definition}
Given two polytopes $P_1$ and $P_2$ in $\R^n$, we define their \emph{Minkowski Sum} $P_1 + P_2$ to be the set of points given by $$P_1 + P_2 = \condset{v_1 + v_2} {v_1 \in P_1 \mbox{ and } v_2 \in P_2}.$$ 
\end{definition}

The following is a classic fact about the Minkowski sum of two polytopes. It basically says that the Minkowski sum of two polytopes is itself a polytope, and the number of vertices of each of the original polytopes is a lower bound on the number of vertices of the Minkowski sum. See~\cite{DvirOliveira14} (Theorem 3.12, Corollary 3.13), and~\cite{Schinzel00} for the formal details of a proof. After we state the result, we will provide an informal proof sketch which also gives some intuition for why the result holds. 

\begin{proposition} \label{main:polytope}
Let $P_1$ and $P_2$ be polytopes in $\R^n$. Then their Minkowski sum $P_1 + P_2$ is a polytope and $$\size{V(P_1 + P_2)} \geq \max\set{\size{V(P_1)}, \size{V(P_2)}}.$$
\end{proposition}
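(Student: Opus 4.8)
The plan is to establish both assertions through the classical fact that the vertices of a polytope are exactly the points that are the \emph{unique} maximizer of some linear functional over the polytope. For the first assertion, write $P_1 = CS(S_1)$ and $P_2 = CS(S_2)$ for finite sets $S_1, S_2 \subseteq \R^n$; then one checks directly that $P_1 + P_2 = CS\left(\condset{u + w}{u \in S_1,\ w \in S_2}\right)$, since a point $\left(\sum_i \lambda_i u_i\right) + \left(\sum_j \mu_j w_j\right)$ of $P_1 + P_2$ equals $\sum_{i,j} \lambda_i \mu_j (u_i + w_j)$, and the weights $\lambda_i \mu_j$ are nonnegative and sum to $\left(\sum_i \lambda_i\right)\left(\sum_j \mu_j\right) = 1$; the reverse inclusion is immediate. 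Hence $P_1 + P_2$ is a polytope.

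For the vertex bound, I will use the following elementary observation about linear functionals: for any $c \in \R^n$, the maximum of $\langle c, \cdot \rangle$ over $P_1 + P_2$ equals $\max_{x \in P_1}\langle c, x\rangle + \max_{x \in P_2}\langle c, x\rangle$, and the set of maximizers over $P_1 + P_2$ is precisely $M_1(c) + M_2(c)$, where $M_i(c) \subseteq P_i$ denotes the (nonempty) face of maximizers of $\langle c, \cdot\rangle$ over $P_i$. Indeed, any $x \in P_1 + P_2$ decomposes as $x_1 + x_2$ with $x_i \in P_i$, and $\langle c, x\rangle = \langle c, x_1\rangle + \langle c, x_2\rangle$ is maximal iff each summand is maximal over its own polytope.

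Using this, I will build a surjection $\pi_1 \colon V(P_1 + P_2) \to V(P_1)$, which immediately gives $\size{V(P_1+P_2)} \ge \size{V(P_1)}$, and by the symmetric argument also $\ge \size{V(P_2)}$, proving the proposition. Given a vertex $u$ of $P_1 + P_2$, pick $c$ with $M_1(c) + M_2(c) = \{u\}$; then $M_1(c) = \{u_1\}$ and $M_2(c) = \{u_2\}$ are forced to be singletons with $u = u_1 + u_2$, so $u_1 \in V(P_1)$ and $u_2 \in V(P_2)$. Moreover, for \emph{any} decomposition $u = u_1' + u_2'$ with $u_i' \in P_i$, the equality $\langle c, u_1'\rangle + \langle c, u_2'\rangle = \max_{P_1}\langle c,\cdot\rangle + \max_{P_2}\langle c,\cdot\rangle$ forces $u_i' \in M_i(c)$, so the decomposition is unique and $\pi_1(u) \defeq u_1$ is well defined. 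To see $\pi_1$ is onto, fix $v \in V(P_1)$ and a functional $c_0$ with $v$ its unique maximizer over $P_1$. The set of functionals $c$ for which $v$ remains the unique maximizer over $P_1$ is open (a small perturbation cannot reverse the finitely many strict inequalities $\langle c_0, v - v'\rangle > 0$ over $v' \in V(P_1)\setminus\{v\}$), hence it is not contained in the finite union of hyperplanes $\condset{c}{\langle c, w - w'\rangle = 0}$ taken over distinct pairs $w, w' \in V(P_2)$; choosing $c$ in the open set but off all these hyperplanes makes $M_2(c)$ a single vertex $w$, while $M_1(c) = \{v\}$ still holds. By the observation above, $M_1(c) + M_2(c) = \{v + w\}$, so $v + w \in V(P_1 + P_2)$ with $\pi_1(v + w) = v$.

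I expect the surjectivity step to be the main obstacle: one must argue that a functional singling out $v$ over $P_1$ can be perturbed so as to also single out a unique vertex of $P_2$ without disturbing its behaviour on $P_1$, which is exactly the genericity argument above. An alternative, more structural route is to observe that the normal fan of $P_1 + P_2$ is the common refinement of the normal fans of $P_1$ and $P_2$, so it has at least as many full-dimensional cones --- equivalently, vertices --- as either summand; but this requires developing normal fans, which is more machinery than the elementary argument above needs.
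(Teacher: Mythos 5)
Your proposal is correct and takes essentially the same approach as the paper: both arguments rest on the observation that the maximizers of a linear functional over $P_1+P_2$ decompose as $M_1(c)+M_2(c)$, combined with a genericity perturbation of the functional so that it singles out a unique vertex of $P_2$ while still exposing the chosen vertex of $P_1$. The only difference is bookkeeping: you package this as a surjection $V(P_1+P_2)\to V(P_1)$ using the uniqueness of the decomposition, whereas the paper builds the corresponding injection $V(P_1)\to V(P_1+P_2)$ by sliding a generic supporting hyperplane of $P_1$ at the given vertex until it first touches $P_2$.
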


\begin{proof}[Proof sketch] Let $u_1, u_2, \ldots, u_{k_1}$ be the vertices of $P_1$ and $v_1, v_2, \ldots v_{k_2}$ be the vertices of $P_2$. Now, any element of $P_1 + P_2$ is of the form $\mu_1$ + $\mu_2$, where $\mu_1$ is a convex combination of $u_1, u_2, \ldots, u_{k_1}$ and $\mu_2$ is a convex combination of $v_1, v_2, \ldots v_{k_2}$. It follows easily from this that $\mu_1$ + $\mu_2$ is a convex combination of $V(P_1) + V(P_2) = \condset{ u+ v }{ u \in V(P_1), v \in V(P_2)}$.
Thus $P_1 + P_2 \subseteq CS(V(P_1) + V(P_2))$ and it is also easily to see that $CS(V(P_1) + V(P_2)) \subseteq P_1 + P_2$. Thus $P_1 + P_2 = CS(V(P_1) + V(P_2))$, and hence it is a polytope. 

We will now show that for every $u \in V(P_1)$, there exists $v \in V(P_2)$ such that $u+ v \in V(P_1 + P_2)$. 

Fix $u \in P_1$. Since $u \in V(P_1)$, there exists a hyperplane $H$ that passes through $u$ and such that all the rest of $P_1$ lies on one side of $H$. In particular there is a degree one polynomial $h$ such that $h(u) = 0$ and for every $u' \in P_1$ such that $u' \neq u$, $h(u') > 0$. (The hyperplane $H$ is the zero set of $h$.)
Moreover such an $h$ and $H$ can be chosen that are ``generic'' in the sense that none of the one-dimensional or higher faces of $P_1$ or $P_2$ can be translated to lie within $H$. (Such an $H$ can be obtained by doing a small random perturbation to the original $H$ about the point $u$.) 

Now for any real number $a$, consider the polynomial $h_a = h + a$. Let $H_a$ be the zero set of $h_a$. If $a$ is a large enough real valued number, then for any $v' \in P_2$, $h_a(v') >0$. Now slowly decrease the value of $a$ till for the first time, for some value $b$, $H_b$ touches $P_2$ at a single point, which will be some vertex $v$.  Since $H$ was a generic hyperplane, this we can ensure that $H_b$ only touches $P_2$ at a single point. 
Thus we will have the property that $h_b(v) = 0$ and for all $v' \in P_2$ such that $v' \neq v$, $h_b(v') > 0$. 

We will now show that for this choice of $v$, $u+v \in V(P_1 + P_2)$. 
Let $c$ be the constant term of $h$. Then $h = h' + c$, where $h'$ is a homogeneous degree one polynomial. Then $h_b$ = $h' + c + b$. 
Consider the degree one polynomial $h^* = h' + 2c + b$, and observe that $h^*(u+v) = h'(u+v) + 2c + b = (h'(u) + c) + (h'(v) + c + b) = 0 $. 
Moreover for any $u' \in P_1$, $v' \in P_2$ such that $(u,v) \neq (u', v')$, it must hold that $h^*(u'+v') > 0$. 
Thus it must be that $u+v \in V(P_1 + P_2)$. 

Observe also that the vertex $u+v$ of $P_1+P_2$ cannot be expressed as $u'+v'$ for any other $u' \in P_1$ and $v' \in P_2$ such that $(u',v') \neq (u,v)$. This is because $h^*(u+v) = 0$ but for  $(u',v') \neq (u,v)$, $h^*(u'+v') > 0$. 
Thus corresponding to the vertex $u \in P_1$, we have identified a vertex $u+v$ of $P_1+ P_2$ which cannot be expressed in any other way as a sum of a vertex of $P_1$ and a vertex of $P_2$. Since we can do this for each vertex of $P_1$, it follows that $\size{V(P_1)} \leq \size{V(P_1 + P_2)}$. By symmetry, $\size{V(P_2)} \leq \size{V(P_1 + P_2)}$, and the result follows. 

\end{proof}

For a polynomial $f \in \F[x_1, x_2, \ldots, x_n]$, suppose that $$f = \sum a_{i_1i_2 \ldots i_n}x_1^{i_1}x_2^{i_2}\cdots x_n^{i_n}.$$ 
For each coefficient $a_{i_1i_2 \ldots i_n} \neq 0$, we say that the exponent vector $(i_1, i_2, \ldots, i_n)$ is in the \emph{support} of $f$, when viewed as a vector in $\R^n$.  
We define $\supp(f)$ to be the set of all support vectors of $f$, i.e. $$\supp(f) = \condset{(i_1, i_2, \ldots, i_n) }{ a_{i_1i_2 \ldots i_n} \neq 0 }.$$

The convex hull of the set $\supp(f)$ is defined to be the {\it Newton polytope} of $f$, which we denote by $P_f$.

The following classic fact was observed by Ostrowski \cite{Ostrowski1921} in 1921. 
It states that if a polynomial $f$ factors as $g \cdot h$, then the Newton polytope of $f$ is the Minkowski sum of the Newton polytopes of $g$ and $h$. (See also~\cite{DvirOliveira14} (Proposition 3.16) for a proof.)  

\begin{proposition}
\label{prop:newton}

Let $f, g, h \in \Fn$ be polynomials such that $f = g \cdot h$. Then $$P_f = P_g + P_h.$$
\end{proposition}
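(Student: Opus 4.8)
Proof Plan for Proposition \ref{prop:newton} ($P_f = P_g + P_h$ when $f = g\cdot h$)

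The plan is to show the two set inclusions $P_f \subseteq P_g + P_h$ and $P_g + P_h \subseteq P_f$ separately, working at the level of support sets and then passing to convex hulls. The key structural fact I would isolate first is the standard observation that the Minkowski sum operation commutes with taking convex hulls: for finite sets $A, B \subseteq \R^n$, $\mathrm{conv}(A+B) = \mathrm{conv}(A) + \mathrm{conv}(B)$, where $A + B = \{a+b : a\in A, b \in B\}$. This is elementary — one inclusion is immediate since $A + B \subseteq \mathrm{conv}(A) + \mathrm{conv}(B)$ and the right side is convex; the other follows by writing a point of $\mathrm{conv}(A)+\mathrm{conv}(B)$ as $\sum_i \lambda_i a_i + \sum_j \mu_j b_j = \sum_{i,j}\lambda_i\mu_j(a_i+b_j)$ and noting $\sum_{i,j}\lambda_i\mu_j = 1$. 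Given this, since $P_g = \mathrm{conv}(\supp(g))$ and $P_h = \mathrm{conv}(\supp(h))$, it suffices to prove that $\supp(f)$ and $\supp(g) + \supp(h)$ have the same convex hull.

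Next I would relate $\supp(f)$ to $\supp(g)+\supp(h)$ directly. The easy direction is that every exponent vector appearing in $f = g\cdot h$ is a sum of an exponent vector of $g$ and one of $h$ (since monomials of the product are products of monomials of the factors), so $\supp(f) \subseteq \supp(g) + \supp(h)$, hence $P_f \subseteq \mathrm{conv}(\supp(g)+\supp(h)) = P_g + P_h$. The harder direction is the reverse: a priori, some sum $\alpha + \beta$ with $\alpha \in \supp(g)$, $\beta \in \supp(h)$ might fail to lie in $\supp(f)$ because of cancellation of coefficients. The standard fix is to show that the \emph{extreme points} survive: I would argue that every vertex $v$ of $P_g + P_h$ lies in $\supp(f)$. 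By the Minkowski-sum structure, such a vertex $v$ decomposes \emph{uniquely} as $v = \alpha + \beta$ with $\alpha$ a vertex of $P_g$ and $\beta$ a vertex of $P_h$ — this is exactly the uniqueness observation proved in the sketch of Proposition \ref{main:polytope} (there is a linear functional maximized/minimized uniquely at $\alpha$ on $\supp(g)$ and at $\beta$ on $\supp(h)$). Because the decomposition is unique, the coefficient of the monomial $x^v$ in $f$ is precisely $a_\alpha \cdot b_\beta \neq 0$, where $a_\alpha, b_\beta$ are the (nonzero) coefficients of $x^\alpha$ in $g$ and $x^\beta$ in $h$; there is no other pair of monomials whose product contributes to $x^v$, so no cancellation can occur. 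Hence $v \in \supp(f)$.

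To finish, I would note that $V(P_g + P_h) \subseteq \supp(f) \subseteq P_g + P_h$, and since a polytope equals the convex hull of its vertex set, taking convex hulls gives $P_g + P_h = \mathrm{conv}(V(P_g+P_h)) \subseteq \mathrm{conv}(\supp(f)) = P_f \subseteq P_g + P_h$, forcing equality. The main obstacle — and really the only subtle point — is the no-cancellation argument for vertices, which hinges on the uniqueness of the Minkowski decomposition at extreme points; everything else is routine convexity bookkeeping. I would cite \cite{Ostrowski1921} and \cite{DvirOliveira14} (Proposition 3.16) for this classical statement and keep the write-up to the two inclusions plus the uniqueness lemma already available from the proof of Proposition \ref{main:polytope}.
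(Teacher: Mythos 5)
Your proposal is correct, but note that the paper itself does not prove Proposition~\ref{prop:newton}: it treats it as a classical fact of Ostrowski and points to~\cite{DvirOliveira14} (Proposition 3.16) for a proof. What you have written is essentially that classical argument, and it is sound: the inclusion $\supp(f) \subseteq \supp(g)+\supp(h)$ gives $P_f \subseteq P_g + P_h$ via $\mathrm{conv}(A+B)=\mathrm{conv}(A)+\mathrm{conv}(B)$, and the no-cancellation claim at vertices (the unique decomposition $v=\alpha+\beta$ forces the coefficient of $x^v$ in $f$ to be the single product $a_\alpha b_\beta \neq 0$, using that $\F$ is a field) gives $V(P_g+P_h) \subseteq \supp(f)$ and hence the reverse inclusion. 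The one place to tighten the write-up is your citation of the sketch of Proposition~\ref{main:polytope} for the uniqueness of the decomposition: that sketch proves the different statement that \emph{each vertex of $P_1$} gives rise to \emph{some} vertex of $P_1+P_2$ with a unique decomposition, not that \emph{every} vertex of $P_1+P_2$ decomposes uniquely into a vertex of $P_1$ plus a vertex of $P_2$, which is what your argument needs. Fortunately the fact you need has a one-line proof you should include instead: if $v \in V(P_1+P_2)$ and $v=\alpha+\beta=\alpha'+\beta'$ with $\alpha,\alpha' \in P_1$, $\beta,\beta' \in P_2$, then $v=\tfrac12(\alpha+\beta')+\tfrac12(\alpha'+\beta)$ with both summands in $P_1+P_2$, so extremality forces $\alpha+\beta'=v$, whence $\beta'=\beta$ and $\alpha'=\alpha$; the same averaging argument shows $\alpha$ and $\beta$ are themselves vertices, hence lie in $\supp(g)$ and $\supp(h)$ respectively. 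With that substitution your proof is complete and self-contained, which is arguably a service the paper leaves to its references.
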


\begin{remark}
This result will eventually play a crucial role in the proof of our sparsity bound. Note that we want to show that if a certain polynomial $f$ is sparse, then $g$ and $h$ are also sparse. We will show that if $g$ (or $h$) is ``dense", $f$ must also be ``dense". If we can show that $P_f$ has many vertices (i.e. corner points), then this will give us a lower bound on the number of monomials in $f$. Since $P_f = P_g + P_h$, a lower bound on $\size{V(P_g)}$ (or $\size{V(P_h)}$) is a lower bound on $\size{V(P_f)}$. Thus we then only need to lower bound $\size{V(P_g)}$, which we will show how to do using the results of the next section.  
\end{remark}

As an immediate corollary of the above two propositions, we easily recover the following basic bound relating the sparsity of polynomials to the Newton polytopes of its factors. (This bound was observed by Dvir and Oliveira in~\cite{DvirOliveira14}). 

\begin{corollary}
\label{cor:sparsity-newton} 
Let $f, g, h \in \Fn$ be polynomials such that $f = g \cdot h$. Then 
$$\mon{f} \geq \size{V(P_f)} \geq \max \set{\size{V(P_g)}, \size{V(P_h)}}.$$
\end{corollary}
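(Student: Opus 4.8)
The plan is to simply chain together the two propositions just established with one elementary observation about Newton polytopes; the corollary is a formal consequence and there is no real obstacle. The statement to prove is the chain $\mon{f} \geq \size{V(P_f)} \geq \max\set{\size{V(P_g)}, \size{V(P_h)}}$, so I would prove the two inequalities separately.

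For the first inequality, $\mon{f} \geq \size{V(P_f)}$, I would recall that by definition $P_f = CS(\supp(f))$, i.e. $P_f$ is the convex span of the finite point set $\supp(f) \subseteq \R^n$. By the basic fact about polytopes recorded earlier in this section (if $P = CS(v_1, \ldots, v_k)$ then $V(P) \subseteq \set{v_1, \ldots, v_k}$), every vertex of $P_f$ must be an element of $\supp(f)$; hence $V(P_f) \subseteq \supp(f)$. Since $\mon{f} = \size{\supp(f)}$ by the definition of sparsity, this yields $\mon{f} = \size{\supp(f)} \geq \size{V(P_f)}$.

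For the second inequality, I would invoke Proposition~\ref{prop:newton} (Ostrowski's observation): since $f = g \cdot h$, we have $P_f = P_g + P_h$. Then Proposition~\ref{main:polytope}, applied to the polytopes $P_g$ and $P_h$, gives directly $\size{V(P_f)} = \size{V(P_g + P_h)} \geq \max\set{\size{V(P_g)}, \size{V(P_h)}}$. Combining the two displays finishes the proof.

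The only point worth stating explicitly — and it is really the conceptual heart of why this works — is why $\mon{f}$, which counts the nonzero monomials of $f$ (a statement about its coefficients), can be compared with $\size{V(P_f)}$, which is a purely combinatorial-geometric quantity: it is precisely because a vertex of the convex hull of a point set is itself one of those points, so a vertex of $P_f$ can never be created through cancellation of monomials and must correspond to an actual nonzero monomial of $f$. All of the genuine work is deferred to Proposition~\ref{main:polytope} (already proved) and, in the next section, to the lower bound on $\size{V(P_g)}$ for polynomials $g$ of bounded individual degree.
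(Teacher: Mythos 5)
Your proposal is correct and follows essentially the same route as the paper: the first inequality comes from $V(P_f) \subseteq \supp(f)$ (a vertex of the convex hull of $\supp(f)$ is itself a support point), and the second from combining Proposition~\ref{prop:newton} with Proposition~\ref{main:polytope}. Nothing further is needed.
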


\begin{proof}
By Proposition~\ref{prop:newton}, $P_f = P_g + P_h$, and hence by Proposition~\ref{main:polytope}, $$\size{V(P_f)} \geq \max\set{\size{V(P_g)}, \size{V(P_h)}}.$$

Since $P_f = \convhull(\supp(f))$, thus $V(P_f) \subseteq \supp(f)$. 
Hence $\size{V(P_f)} \leq \mon{f}$ and the result follows. 
\end{proof}

It is worth noting that if $d=1$, that is when $g$ ( or $h$) is multilinear, then every point in $P_g$ is a corner point. Hence,  $\size{V(P_g)}=\mon{g}$ and by Prop.~\ref{main:polytope} $\mon{f} \geq \mon{g}$.

\subsection{An approximate Carath\'eodory's Theorem}

Carath\'eodory's theorem is a fundamental result in convex geometry, and it states that if a point $\mu \in \R^n$ lies in the convex hull of a set $U$, then $\mu$ can be written as the convex combination of at most $n +1$ points of $U$. 

In order to prove our sparsity bound, we will be using an ``approximate" version of Carath\'eodory's theorem. The version that we use appears in~\cite{Barman15}\footnote{There is actually a small typo in the version of the theorem in~\cite{Barman15}, and the statement below fixes it.}. It essentially states that  if a point $\mu \in \R^n$ lies in the convex hull of a set $U$, then $\mu$ can be {\it uniformly} $\epsilon$-approximated in the $\ell_\infty$ norm by a vector that is the convex combination of only $\frac{\log n}{\epsilon^2}$ points of $U$. %uniformly?

We first introduce some notation that we will use. For a set of vectors $U= \set{u_1, u_2, \ldots, u_m} \subseteq \mathbb R^n$, let $\convhull(U)$ denote the convex hull of $U$. (Note that for a finite set, the convex span of a set of vectors is the same as the convex hull of the vectors. Since in the rest of the paper we will only be dealing with finite sets, we will use the terms convex span and convex hull interchangeably).  A vector $\mu \in \convhull(U)$ is defined to be $k$-uniform with respect to $U$ if there exists a multiset $S$ of $[m]$ of size at most $k$ such that $\mu = \frac{1}{k}\sum _{i \in S} u_i$. 

 We present proof of the approximate Carath\'eodory theorem for completeness after stating the  theorem. There are other approximate versions of the Carath\'eodory Theorem that appear in the literature, often in terms of $\ell_p$ norms where $2 \leq p < \infty$. The version below is for the $\ell_{\infty}$ norm, and its proof is fairly straightforward.

\begin{theorem}[\cite{Barman15}, Theorem 3]
\label{thm:approx-caratheodory}

Given a set of vectors $U= \set{u_1, u_2, \ldots, u_m} \subseteq \mathbb R^n$ with $\max_{u \in U}  \left\|u\right\|_{\infty} \leq 1$, and $\epsilon > 0$. For every $\mu \in \convhull(U)$ there exists an $\BigO \left(\frac{\log n}{\epsilon^2}\right)$ uniform vector $\mu' \in \convhull(U)$ such that $\left\| \mu - \mu' \right\|_{\infty} \leq \epsilon$. 
\end{theorem}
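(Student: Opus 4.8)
The plan is to use the standard probabilistic argument: sample $k = \BigO(\log n/\epsilon^2)$ points from $U$ according to the convex-combination weights that express $\mu$, form the empirical average, and show that the expected $\ell_\infty$ distance to $\mu$ is at most $\epsilon$; then a point achieving at most the expectation exists. First I would write $\mu = \sum_{i=1}^m \lambda_i u_i$ with $\lambda_i \geq 0$ and $\sum_i \lambda_i = 1$, which is possible since $\mu \in \convhull(U)$. Let $Z_1, \ldots, Z_k$ be i.i.d.\ random vectors taking value $u_i$ with probability $\lambda_i$, so that $\E[Z_j] = \mu$ for each $j$, and set $\mu' \defeq \frac{1}{k}\sum_{j=1}^k Z_j$. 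By construction $\mu'$ is a $k$-uniform vector in $\convhull(U)$, so it only remains to control $\norm{\mu - \mu'}_\infty$ in expectation.

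The key step is a coordinate-wise concentration estimate combined with a union bound over the $n$ coordinates. Fix a coordinate $\ell \in [n]$. The random variables $(Z_j)_\ell - \mu_\ell$ are i.i.d., mean zero, and bounded in absolute value by $2$ (since $\norm{u_i}_\infty \leq 1$ forces $\abs{(Z_j)_\ell} \leq 1$ and $\abs{\mu_\ell}\leq 1$). Hence $\mu'_\ell - \mu_\ell = \frac{1}{k}\sum_{j=1}^k \big((Z_j)_\ell - \mu_\ell\big)$ is an average of $k$ bounded i.i.d.\ centered variables, and Hoeffding's inequality gives $\Pr\big[\abs{\mu'_\ell - \mu_\ell} > \epsilon\big] \leq 2\exp(-k\epsilon^2/2)$. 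A union bound over the $n$ coordinates yields $\Pr\big[\norm{\mu - \mu'}_\infty > \epsilon\big] \leq 2n\exp(-k\epsilon^2/2)$, which is strictly less than $1$ once $k > \frac{2\ln(2n)}{\epsilon^2} = \BigO(\log n/\epsilon^2)$. Therefore with positive probability $\norm{\mu - \mu'}_\infty \leq \epsilon$, so some realization of $\mu'$ works; this realization is a multiset $S \subseteq [m]$ of size $k$ with $\mu' = \frac{1}{k}\sum_{i \in S} u_i$, proving the claim.

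I expect the main (minor) obstacle to be bookkeeping rather than conceptual: one must be careful that the sampled $\mu'$ genuinely lies in $\convhull(U)$ (it does, being an average of elements of $U$, hence a convex combination with rational weights that are multiples of $1/k$), and that the boundedness constant fed into Hoeffding is handled correctly (range $2$, not $1$, which only affects the constant hidden in $\BigO(\cdot)$). An alternative deterministic route would replace the probabilistic selection by an explicit greedy/Frank--Wolfe-type iteration that at each step picks the vertex of $U$ best correlated with the current residual; this also gives the $\BigO(\log n/\epsilon^2)$ bound in $\ell_\infty$ and avoids any appeal to randomness, but the probabilistic proof above is the shortest and is the one I would present, following~\cite{Barman15}.
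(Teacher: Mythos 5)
Your proposal is correct and follows essentially the same argument as the paper: express $\mu$ as a convex combination, sample $\BigO(\log n/\epsilon^2)$ points i.i.d.\ from the corresponding distribution, and combine a coordinate-wise Chernoff--Hoeffding bound with a union bound over the $n$ coordinates to get a good realization with positive probability. The only difference is minor bookkeeping in the Hoeffding range constant, which only affects the hidden constant in the $\BigO(\cdot)$.
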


\begin{proof}
Since $\mu \in \convhull(U)$, thus $\mu = \sum_{i=1}^m a_iu_i$, where for each $i \in [m]$, $a_i \geq 0$ and $\sum_{i=1}^m a_i = 1$.
Now consider the following probability distribution on $U$, where the probability of sampling $u_i$ is $a_i$. 
Pick $t = \left(\frac{\log n}{\epsilon^2}\right)$ samples independently from this distribution and let the resulting vectors be $v_1, v_2, \ldots, v_t$. 
Let $$\mu' = \frac{\sum_{i=1}^t v_i}{t}.$$
\begin{claim}
For any coordinate $j \in [n]$, $\Pr[ \abs{\mu_j - \mu'_j} > \epsilon ] < 1/n.$
\end{claim}
\begin{proof}
It follows immediately from the Chernoff-Hoeffding bounds applied to $t$ independent samples $Y_1, Y_2, \ldots, Y_t$ of the random variable $Y$, where for each $i \in [m]$, $Y = (u_i)_j$ with probability $a_i$. Then clearly $\E[Y] = \mu_j$, and $\mu'_j = \frac{\sum_{i=1}^t Y_i}{t}.$
Then by the Chernoff-Hoeffding inequality, $$\Pr[ |\mu_j - \mu'_j| > \epsilon ] < e^{-2\epsilon^2t} < 1/n.$$
\end{proof}

Once we have the claim, then a simple union bound over the coordinates implies that with positive probability,   $\left\| \mu - \mu' \right\|_{\infty} \leq \epsilon,$ and hence a suitable $\frac{\log n}{\epsilon^2}$ uniform vector $\mu' \in \convhull(U)$ exists. 
\end{proof}

\section{Sparsity Bound} 
\label{sec:sparsity}

% \begin{theorem}
% Let $f, g, h \in \mathbb F[x_1, x_2, \ldots, x_n]$ such that $f = g \cdot h$. 
% If $f$ has sparsity $s$ and individual degree $d$, then $g$ has sparsity at most $s^{O(d^2 \log n)}$.
% \end{theorem}
In this section we prove the sparsity bound. 

\begin{theorem}[The Bound of Factor Sparsity]
\label{thm:sb}
There exists an non-decreasing function $\SB(n,s,d)\leq s^{\BigO({d^2\log{n}})}$ 
such that if $f \in \Fn$ is a polynomial of sparsity $s$ and individual degrees at most $d$, and if $f = g \cdot h$, for $g, h \in \Fn$, then the sparsity of $g$ is upper bounded by 
$\SB(n,s,d)$.
\end{theorem}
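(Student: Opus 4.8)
The plan is to reduce Theorem~\ref{thm:sb}, via the Newton polytope machinery of Section~\ref{sec:Polytope}, to the following purely combinatorial claim, and then to establish the claim using the approximate Carath\'eodory theorem (Theorem~\ref{thm:approx-caratheodory}): \emph{if $E \subseteq \{0,1,\ldots,d\}^n$ with $\size{E} = s'$, then the convex hull of $E$ has at least $\size{E}^{1/\BigO(d^2\log n)}$ vertices.}

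For the reduction, first note that since $f = g\cdot h$ (with $f \neq 0$, hence $g,h \neq 0$) and $\lc_{x_i}(g\cdot h)=\lc_{x_i}(g)\cdot\lc_{x_i}(h)\neq 0$, individual degrees add under multiplication, so $g$ has individual degrees at most $d$; thus $\supp(g) \subseteq \{0,1,\ldots,d\}^n$ and $\size{\supp(g)} = \mon{g}$. By Corollary~\ref{cor:sparsity-newton}, $\mon{f} \ge \size{V(P_g)} = \size{V(\convhull(\supp(g)))}$. Feeding $E = \supp(g)$ into the combinatorial claim then gives $\mon{f} \ge \mon{g}^{1/\BigO(d^2\log n)}$, i.e.\ $\mon{g} \le \mon{f}^{\BigO(d^2\log n)} = s^{\BigO(d^2\log n)}$. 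Taking $\SB(n,s,d) := s^{cd^2\log n}$ with $c$ the implied absolute constant then finishes the proof, and this function is manifestly non-decreasing in $n,s,d$. (When $n=1$, $g$ is univariate of degree at most $d$, so $\mon{g}\le d+1$, which is handled directly; hence assume $n\ge 2$.)

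To prove the combinatorial claim, write $V(E) \subseteq E$ for the set of vertices of $\convhull(E)$ and rescale: the set $U := \tfrac1d V(E) \subseteq [0,1]^n$ satisfies $\max_{u\in U}\norm{u}_\infty \le 1$. Fix $\epsilon := \frac{1}{3d}$. Each $\mu \in E$ satisfies $\frac1d\mu \in \convhull(U)$, so Theorem~\ref{thm:approx-caratheodory} produces a $k$-uniform vector $\mu' \in \convhull(U)$ with $\norm{\frac1d\mu - \mu'}_\infty \le \epsilon$ for some $k = \BigO(\log n/\epsilon^2) = \BigO(d^2\log n)$. The key observation is a separation bound: distinct points of $E$ have integer coordinates, hence differ by at least $1$ in some coordinate, so after rescaling they lie at $\ell_\infty$-distance at least $\frac1d = 3\epsilon > 2\epsilon$; by the triangle inequality a single $k$-uniform vector $\mu'$ can be within $\epsilon$ of $\frac1d\mu$ for at most one $\mu \in E$. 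Since every $k$-uniform vector with respect to $U = \{u_1,\ldots,u_m\}$, where $m = \size{V(E)}$, has the form $\frac1k\sum_{j=1}^k u_{\phi(j)}$ for some $\phi : [k] \to [m]$, there are at most $m^k = \size{V(E)}^k$ of them; hence $\size{E} \le \size{V(E)}^k$, i.e.\ $\size{V(E)} \ge \size{E}^{1/k} = \size{E}^{1/\BigO(d^2\log n)}$, which is the claim.

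The one genuinely delicate point — and where the $d^2$ in the exponent is forced — is balancing the two requirements in the last step: $\epsilon$ must be of order $1/d$ so that the $\epsilon$-balls around the rescaled lattice points of $E$ are pairwise disjoint (otherwise one approximating vector could be charged to several points of $E$), while $k = \Theta(\log n/\epsilon^2) = \Theta(d^2\log n)$ controls how many such approximating vectors exist. Everything else — additivity of individual degrees, the passage to $P_g$ via Propositions~\ref{prop:newton} and~\ref{main:polytope}, and the counting of $k$-uniform vectors — is routine given the tools already assembled in Sections~\ref{sec:prelim} and~\ref{sec:Polytope}. One should also dispose of the harmless edge cases ($\mon{g}\le 1$, $n\le 1$), and, for the cleanest constants, replace $\log n$ by $\log(n+1)$ throughout so the bound is nonvacuous for every $n$.
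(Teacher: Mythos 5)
Your proposal is correct and follows essentially the same route as the paper: reduce via Propositions~\ref{prop:newton} and~\ref{main:polytope} (i.e.\ Corollary~\ref{cor:sparsity-newton}) to a lower bound on the number of vertices of the convex hull of a subset of $\set{0,1,\ldots,d}^n$, and prove that bound exactly as in Theorem~\ref{cor:cornerpoints} by rescaling by $1/d$, taking $\epsilon = 1/(3d)$ in the approximate Carath\'eodory theorem, and using the triangle-inequality separation plus the $\size{V(E)}^k$ count of $k$-uniform vectors. Your explicit justification that $g$ inherits individual degree at most $d$ (via multiplicativity of leading coefficients) is a small point the paper leaves implicit, but the argument is the same.
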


Before presenting the proof of the sparsity bound, we first show how to apply the approximate Carath\'eordory's Theorem~\ref{thm:approx-caratheodory} to show that the convex hull of any subset of ${\set{0,1,\ldots, d}}^n$ must have {\it many} vertices (i.e. corner points). 

\begin{theorem}\label{cor:cornerpoints}
Let $E \subseteq {\set{0,1,\ldots, d}}^n$. Let $t = \size{V(CS(E))}$. 
Then there exists an absolute constant $C$ such that 
Then $t^{C d^2 \log n} \geq \size{E}$. 
\end{theorem}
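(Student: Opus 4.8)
The plan is to apply the approximate Carath\'eodory theorem (Theorem~\ref{thm:approx-caratheodory}) to the vertex set $V = V(CS(E))$, using the fact that $E \subseteq \{0,1,\ldots,d\}^n$ so all points have $\ell_\infty$ norm at most $d$. First I would normalize: set $U = \{v/d : v \in V\}$, so that $\max_{u\in U}\|u\|_\infty \le 1$ and $\size{U} = t$. For each point $\mu \in E$, since $\mu \in CS(E) = CS(V)$, the vector $\mu/d$ lies in $\convhull(U)$, so by Theorem~\ref{thm:approx-caratheodory} there is a $k$-uniform vector $\mu'/d \in \convhull(U)$ with $\|\mu/d - \mu'/d\|_\infty \le \epsilon$, where $k = \BigO(\log n/\epsilon^2)$. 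Equivalently, $\mu' = \frac{1}{k}\sum_{i\in S}v_i$ for a multiset $S$ of $V$ of size $k$, and $\|\mu - \mu'\|_\infty \le \epsilon d$.

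The key quantitative choice is $\epsilon = \frac{1}{3d}$ (any constant fraction of $1/d$ works), which gives $\|\mu - \mu'\|_\infty \le 1/3$ and $k = \BigO(d^2\log n)$. Now I would argue that the map $\mu \mapsto S$ is injective on $E$: if two distinct points $\mu_1, \mu_2 \in E \subseteq \Z^n$ gave rise to the same multiset $S$, they would have the same approximator $\mu'$, and then $\|\mu_1 - \mu_2\|_\infty \le \|\mu_1 - \mu'\|_\infty + \|\mu' - \mu_2\|_\infty \le 2/3 < 1$, forcing $\mu_1 = \mu_2$ since they are integer vectors — a contradiction. Hence $\size{E}$ is at most the number of size-$k$ multisets drawn from a set of size $t$, which is $\binom{t+k-1}{k} \le (t+k)^k$.

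Finally I would clean up the bound $\size{E} \le (t+k)^k$ with $k = \BigO(d^2\log n)$ into the claimed form $t^{Cd^2\log n} \ge \size{E}$. The routine point here is that $t \ge n+1$ whenever $E$ is not contained in a hyperplane (and in degenerate low-dimensional cases one can restrict to the affine span, or simply note $t \ge 2$ suffices to absorb the additive $k$ into the exponent by adjusting the constant $C$), so $(t+k)^k \le t^{\BigO(k)} = t^{\BigO(d^2\log n)}$. The main obstacle is not conceptual but bookkeeping: one must make sure the normalization by $d$ and the choice of $\epsilon$ interact so that the final approximation error is strictly below $1$ in $\ell_\infty$ (to exploit integrality), while keeping $k = \BigO(d^2\log n)$; and one must handle the edge case $t$ small (e.g. $E$ lying in a low-dimensional face) so that the additive $k$ term can legitimately be folded into the exponent. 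I expect the injectivity-via-integrality step to be the crux of the argument, and it is exactly the place where the bounded-individual-degree hypothesis $E \subseteq \{0,\ldots,d\}^n$ is used twice — once for the norm bound feeding into Carath\'eodory, and once for the integer-rounding that makes distinct points non-confusable.
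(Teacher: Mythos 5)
Your proposal follows essentially the same route as the paper: rescale by $d$, apply the approximate Carath\'eodory theorem with $\epsilon = 1/(3d)$ to get $k$-uniform approximators with $k = \BigO(d^2\log n)$, and use the triangle inequality together with the $\ell_\infty$-separation of distinct integer points to show distinct points of $E$ get distinct approximating multisets. That core argument is correct and matches the paper's proof.

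The one place you deviate is the final count, and there your cleanup is slightly off. You bound the number of multisets by $\binom{t+k-1}{k} \le (t+k)^k$ and then claim that $t \ge 2$ lets you absorb the additive $k$ by adjusting the absolute constant $C$; but $(t+k)^k$ with $t=2$ and $k = \Theta(d^2\log n)$ is $2^{\Theta(k\log k)}$, so this absorption costs an extra $\log(d\log n)$ factor in the exponent and $C$ would not be an absolute constant (your fallback $t \ge n+1$ also does not cover large $d$). The fix is immediate and is what the paper does implicitly: the number of $k$-uniform convex combinations of elements of $V(CS(E))$ is at most the number of ordered $k$-tuples, i.e.\ at most $t^k$, which gives $\size{E} \le t^{\BigO(d^2\log n)}$ directly with no case analysis on $t$.
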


\begin{proof}
Let $E_d \subseteq [0,1]^n$ be the set obtained by taking $E$ and scaling every member of it down coordinate-wise by a factor of $d$. 
Let $\epsilon = 1/3d$. 
Let $U = V(CS(E)) \subseteq E$ be the set of vertices of the convex span of $E$. 
Similarly, let $U_d = V(CS(E_d)) \subseteq E_d$. 
Then clearly $\size{U} = \size{U_d}$. 

By Theorem~\ref{thm:approx-caratheodory}, for every $u_d \in E_d$, since $u_d \in \convhull(U_d)$, thus there exists an $\BigO\left(\frac{\log n}{\epsilon^2}\right)  = \BigO(d^2 \log n)$ uniform vector $u_d' \in \convhull(U_d)$ such that $\left\| u_d - u_d' \right\|_{\infty} \leq 1/3d$.

Observe that  for two distinct vectors $u_d, v_d \in E_d$, $\left\| u_d - v_d \right\|_{\infty} \geq 1/d$. 
Hence if $u_d' \in \convhull(U_d)$ is an  $\BigO(d^2 \log n)$ uniform vector such that 
$\left\| u_d - u_d' \right\|_{\infty} \leq 1/3d$
and if $v_d' \in \convhull(U_d)$ is an  $\BigO(d^2 \log n)$ uniform vector such that 
$\left\| v_d - v_d' \right\|_{\infty} \leq 1/3d$,
Then by the triangle inequality, we must have that $u_d' \neq v_d'$. 

The total number of $\BigO(d^2 \log n)$ uniform vectors that can be generated by the set $U_d$ is $\size{U_d}^{\BigO(d^2 \log n)}$. 
Moreover we have just shown that one can generate $\size{E_d}$ distinct  $\BigO(d^2 \log n)$ uniform vectors from $U_d$. 

Thus there is an absolute constant $C$ such that 
Thus, $$\size{U_d}^{Cd^2 \log n} \geq \size{E_d}$$

and we thus conclude that $t^{Cd^2 \log n} \geq \size{E}$.

\end{proof}

\begin{remark}
In fact, the dependence on $\log n$ in the theorem above is necessary. In particular, there is a set $E  \subseteq \{ -1,0,1\}^n$ such that the number of corner points in the convex hull of $E$ is $n$, but $\size{E} = n^{\Omega(\log n)}$.  However, it is not clear if such polytopes yield a polynomial with $\SB(n,s,d) =  s^{\Theta({d^2\log{n}})}$.
 An example of such a set (and the resulting polytope) was shared with us in \cite{Sap18}, and we describe it below. 
\end{remark} 

\newpage

\begin{claim}[\cite{Sap18}]
\label{example:Sap}
There is a set $E \subseteq  \{ -1,0,1\}^n$  s.t. $\size{V(CS(E))} = n$ and  $\size{E} = n^{\Omega(\log n)}$. 
\end{claim}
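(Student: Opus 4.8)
\textbf{Proof plan for Claim~\ref{example:Sap}.}
The plan is to construct $E$ as the support of a tensor-power construction so that the Newton polytope is a ``cross-polytope-like'' object with few vertices but an exponentially (in $\log n$, hence quasi-polynomially) large number of lattice points inside. The natural candidate is to take a fixed small gadget in $\{-1,0,1\}^k$ whose convex hull has all of its points as non-vertices except for $O(k)$ of them, and then iterate the construction $\log n$ times. Concretely, I would start from the one-dimensional set $\{-1,0,1\}\subseteq\R$, whose convex hull is the segment $[-1,1]$ with exactly two vertices $\{-1,1\}$ but three lattice points. Then I would form $E$ by a recursive ``sum'' construction: given $E'\subseteq\{-1,0,1\}^{m}$, place several scaled translated copies of $E'$ along new coordinates so that the number of points multiplies while the number of vertices only adds.

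The key steps, in order, would be: (1) Identify the right recursive operation. I expect the cleanest choice is: if $E_1\subseteq\{-1,0,1\}^{n_1}$ has $v_1$ vertices and $N_1$ points, and $E_2\subseteq\{-1,0,1\}^{n_2}$ has $v_2$ vertices and $N_2$ points, then $E_1\times E_2\subseteq\{-1,0,1\}^{n_1+n_2}$ has at most $v_1 v_2$ vertices --- but that multiplies vertices too, which is no good. So instead I would use a ``join''-type or ``direct sum of polytopes'' operation: embed $E_1$ in the first block of coordinates at second-block value $0$, embed $E_2$ in the second block at first-block value $0$, giving a set with $N_1+N_2-1$ points and $v_1+v_2$ vertices --- that adds points too slowly. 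The correct trade-off must multiply points while only adding vertices; this is exactly what the iterated construction $\{-1,0,1\}$ applied ``in a tree'' achieves if the gadget has the property that its convex hull is a \emph{simplex} whose vertices are few but which contains many lattice points of $\{-1,0,1\}^k$ in its interior or on low-dimensional faces. (2) The standard such gadget: consider in $\R^2$ the points $\{(0,0),(1,0),(0,1),(1,1),\ldots\}$ --- more promisingly, take the $k$-dimensional hypersimplex or the convex hull of $\{0, e_1+\cdots+e_j : 1\le j\le k\}$-type chains. (3) Verify by an explicit hyperplane/supporting-functional argument which points of $E$ are vertices of $CS(E)$, showing the count is $n$. (4) Count $|E|$ by the recursion and check it reaches $n^{\Omega(\log n)}$, i.e. the recursion depth is $\Theta(\log n)$ and each level multiplies the point count by a constant $>1$ while adding only $O(1)$ or $O(\text{current dimension})$ vertices.

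The main obstacle I anticipate is step (1)/(3): finding a gadget and an iteration that genuinely \emph{multiplies} the number of interior lattice points at each level while keeping the vertex count additive, and then rigorously certifying the vertex set of the resulting high-dimensional polytope. The certification is the delicate part: one must exhibit, for each claimed vertex, a linear functional uniquely maximized there, and simultaneously argue every other point of $E$ lies strictly inside the convex hull of the claimed vertices (e.g.\ by writing it explicitly as a convex combination). Because the construction is recursive, I would set up both the vertex-certificates and the convex-combination-witnesses recursively, proving by induction on the number of levels that (i) $CS(E_\ell)$ has exactly the claimed $O(\text{level})$ vertices, (ii) every non-vertex point of $E_\ell$ is a convex combination of the vertices, and (iii) $|E_\ell|$ grows by a constant factor per level. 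The base case ($\ell=1$, the segment $[-1,1]$) is immediate, and the inductive step would transfer the supporting hyperplanes through the embedding maps. Once the induction closes, setting $\ell=\Theta(\log n)$ and tracking dimensions gives a set in $\{-1,0,1\}^n$ with $n$ vertices and $n^{\Omega(\log n)}$ points, as claimed. Since the paper states this only as a claim attributed to \cite{Sap18} and explicitly says ``we describe it below,'' I would keep the exposition brief, presenting the construction and the inductive invariants without belaboring the routine hyperplane computations.
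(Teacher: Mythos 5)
There is a genuine gap: your proposal is a search plan whose central object --- the gadget and the recursive operation that multiplies the number of lattice points while only adding vertices --- is never exhibited, and you yourself flag this as the main obstacle. Worse, the quantitative bookkeeping in your step (4) cannot reach the target as stated: if each of $\Theta(\log n)$ levels multiplies the point count by a constant $C>1$, you end with only $C^{\Theta(\log n)}=n^{O(1)}$ points, which is polynomial, not $n^{\Omega(\log n)}$. To reach $n^{\Omega(\log n)}=2^{\Omega(\log^2 n)}$ in $\Theta(\log n)$ levels, each level would have to multiply the point count by a factor polynomial in $n$ while contributing only about $n/\log n$ new vertices, and no candidate gadget with that behaviour is identified. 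Note also that the more ambitious variant you hint at (constant growth per level over $\Theta(n)$ levels, giving $2^{\Theta(n)}$ points with $O(n)$ vertices) is outright impossible: the paper's Theorem~\ref{cor:cornerpoints} (with $d=2$ after shifting $\{-1,0,1\}$ to $\{0,1,2\}$) forces $\size{E}\leq \size{V(CS(E))}^{O(\log n)}$, so $n$ vertices can never enclose more than $n^{O(\log n)}$ points of the grid --- the claimed example is extremal, which is a strong hint that a generic product/join recursion will not find it.

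The paper's actual construction is quite different and non-recursive in your sense. Take $n=2^m$ and let $V\subseteq\{-1,1\}^n$ be the set of columns of the $n\times n$ Hadamard matrix $H$, indexed by $\F_2^m$, with entries $(-1)^{\langle a,b\rangle}$. For every linear subspace $S\subseteq\F_2^m$, the uniform average $\frac{1}{\size{S}}H\cdot u_S$ of the columns indexed by $S$ (where $u_S$ is the indicator of $S$) is exactly the $0/1$ indicator vector of the orthogonal subspace $S^{\perp}$, by the character-sum cancellation $\sum_{a\in S}(-1)^{\langle a,b\rangle}=\size{S}$ if $b\in S^{\perp}$ and $0$ otherwise. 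Distinct subspaces give distinct $0/1$ vectors, and the number of subspaces of $\F_2^m$ is $n^{\Omega(\log n)}$. Taking $E$ to be $V$ together with all these averages gives $\size{E}=n^{\Omega(\log n)}$, while the only vertices of $CS(E)$ are the $n$ Hadamard columns (they are $\pm1$ vectors, hence extreme in $[-1,1]^n$, and every other point of $E$ is by construction a convex combination of them). So the quasi-polynomial count comes from counting subspaces of $\F_2^{\log n}$, not from a constant-factor-per-level recursion; if you want to salvage a recursive viewpoint, the right one is the tensor-power structure of $H=H_2^{\otimes m}$, but the point-counting step still has to go through the subspace enumeration rather than a per-level multiplicative invariant.
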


\begin{proof}
Let $m$ be a positive integer. Let $n = 2^m$ and let $H$ be the $n \times n$ Hadamard matrix. More precisely, $H$ is the matrix over the Reals with entries being $1$ or $-1$ such that if we index the rows and columns of $H$ by the elements of $\mb{F}_2^m$, and then the $(a,b)$ entry of $H$ is $(-1)^{\langle  a  , b\rangle }$, for all $a,b \in \mb{F}_2^m$. 

Let $V\subseteq \{-1, +1\}^n$ be the set of column vectors of $H$. We will show that the convex span of $V$ contains at least $n^{\Omega(\log n)}$ distinct elements of $\{ -1,0,1\}^n$, and this will suffice to prove the claim. 

Recall that each element of $V$ is indexed by an element of $\mb{F}_2^m$.

We will show that for each  $S \subseteq \mb{F}_2^m$ that is a linear subspace, if we take the uniform convex span of the elements of $V$ that correspond to the elements of $S$, then we get an element of $\{ 0,1\}^n$, Moreover, distinct subspaces give rise to distinct elements of $\{ 0,1\}^n$. Since the number of subspaces is $n^{\Omega(\log n)}$, the result then follows. 

Now, let $S \subseteq \mb{F}_2^m$ be a linear subspace, and let $u_S$ be the characteristic vector of this subspace. We need to show that, $$\frac{1}{\size{S}} H\cdot u_S \subseteq \{0,1\}^n.$$ 

Let $T=\{b \in \mb{F}_2^m : \langle  a,  b\rangle=0,  \, \, \forall \, a \in S\}$.
With slight abuse of notation let $(H \cdot {u}_S)_{b} $ corresponds to the entry in the b-th coordinate. Notice that, if $b \in T$, then $(H \cdot {u}_S)_{b}=\sum_{a\in S}(-1)^{\langle a, b \rangle } =\size{S}$.

On the other hand, if $b \not\in T$, then $\langle a_o,b \rangle  =1 $ for some $a_o \in S$. Thus, for each $a \in S$, we have that $(-1)^{\langle  a+ a_o , b\rangle }$ and $(-1)^{\langle  a,b \rangle }$ to have different signs. Hence, $(H \cdot {u}_S)_{b} =0$ in this case.

Thus,
$$\frac{1}{\size{S}}H \cdot {u}_S \subseteq \{ 0,1\}^n,$$ and the coordinates that equal $1$ are precisely those that correspond to the orthogonal subspace of $S$. Thus distinct subspaces $S$ give rise to distinct vectors in $\{ 0,1\}^n$.
\end{proof}

\begin{remark}
In order to obtain a polytope with non-negative coordinates, one can simply shift all the coordinates by $1$. 
\end{remark} 

\begin{comment}
It is possible that the bound obtained in the theorem above is not tight and the dependence on $\log n$ is not needed. In particular we are unaware of any examples of sets for which we can show that the $\log n$ factor is necessary. If we can remove any dependence on $n$ then this would immediately translate into polynomial (instead of quasi-polynomial time) sized sparsity bounds, and polynomial time deterministic factoring algorithms. Indeed, for $d=1$ and $d=2$, one can obtain tight bounds independent of $n$. 

\end{comment}

We now prove Theorem~\ref{thm:sb}.

\begin{proof}[Proof of Theorem~\ref{thm:sb}]

Let $\mon{g}$ denote the sparsity of $g$. Thus $g$ has $\mon{g}$ monomials.
Let $\supp(f), \supp(g) \subseteq {\set{0,1,\ldots, d}}^n$ denote the sets of exponent vectors of $f$ and $g$, respectively. 
%Let  \subseteq {\set{0,1,\ldots, d}}^n$ denote the set of exponent vectors of $f$.

Let $t_g =\size{V(CS(\supp(g)))}$. Thus $t_g$ denotes the number of vertices of the polytope which is the convex span of $\supp(g)$. Similarly let $t_f =\size{V(CS(\supp(f)))}$.
 By Theorem~\ref{cor:cornerpoints},
$$t_g\geq \mon{g}^{\frac{1}{Cd^2 \log n}}.$$
Now, by Corollary~\ref{cor:sparsity-newton}, $t_g \leq t_f$. Moreover, since $ V(CS(\supp(f))) \subseteq E_f$, thus $t_f \leq \size{E_f}$, which equals the sparsity of $f$, $\mon{f}$. 
Hence $$\mon{f} \geq \mon{g}^{\frac{1}{Cd^2 \log n}}$$
and the theorem follows.

\end{proof}

\begin{comment}
\section{Sparsity Bound}

\begin{enumerate}
 \item Theorem follows directly from approximate Carath\'eodory's theorem, we will write a prove for completeness.
\item We will prove this by proving a lower bound on number of corner points.
\item $\mc{S}={u_1,u_2 .... u_s}$, where $u_i$ are $n$ dimensional vectors with each coordinate lies in $[d]$. 

\item Consider the Convex Span(CS) of $\mc{S}$, let $v_1, v_2 ... v_t$ be corner points of CS. 

\item We will show that for any point $\mu \in CS$ there exist a point  $\mu'$ such that : 
\begin{enumerate}
\item $|\mu-\mu'|_{\infty} < \frac{1}{3} $ 
\item $\mu'$ is a uniform combination of atmost $\mc{O}(d^2 \log{n})$ corner points. 
\end{enumerate}
\item Let $ \mu =\sum_{i=1}^t \alpha_i v_i$, where $\sum_{i=1}^t \alpha_i=1$ 
Consider the random experiment, for picking $v_i$ with probability $\alpha_i$ (repeat $\mc{O}(d^2 \log{n})$ times). Let the output of this experiment be $v_{a_1},v_{a_2}, \ldots v_{a_\ell}$. $\mu'=\frac{\sum_{j=1}^{\ell}v_{a_j}}{\ell} $ 
\item Claim: $Pr(|\mu-\mu'|_{\infty} < \frac{1}{3}) >0$; Proof: Chernoff followed by union on each co-ordinate.
\item Notice that each $u_i$ has at-least one $\mc{O}(d^2 \log{n})$ uniform neighbor and this neighbor cant correspond to any  $u_{j\neq i} $.  Thus,  $t > s^{1/ \mc{O}(d^2 \log{n})}$.  
\end{enumerate}

\end{comment}

\subsection{Tightness of Sparsity Bound}
\label{sec:tightness}

In this section we see some examples of polynomials that have factors with a significantly larger number of monomials then the original polynomials. In the case where we do not bound the individual degree of the polynomials, the factors can have a superpolynomial number of monomials. 

%The following simple examples shows that there can be a super-polynomial blow-up in the
%sparsity of a factor of $f$. 
Interestingly, the examples we will see are also tight cases of Prop.~\ref{main:polytope}.

The following example was noted by von zur Gathen and Kaltofen~\cite{GathenKaltofen85}. 

\begin{example}[\cite{GathenKaltofen85}] 
\label{eg1}
Let \begin{align*}
f(x)=&\prod_{i=1}^n (x_i^d-1), \\
g(x)=&\prod_{i=1}^n (1+x_i+\ldots+x_i^{d-1})
\end{align*}
\end{example}

Notice that, $g$ is a factor of $f$, but $\mon{f}=2^n$ and $\mon{g}=d^n$. Thus letting $s$ denote the sparsity of $f$, notice that $\mon{g}=s^{\log{d}}$, where $d$ is the individual degree of $f$. 

Indeed, for fields of characteristic $0$, this is the best ``blow-up'' of the sparsity that we are aware of. 
%Taking $n = \log{d} $ the example shows that $\mon{g}=\Theta(\mon{f}^{\log{\mon{f}}})$ and thus demonstrate a possible \textit{quasi-polynomial} blowup in sparsity of factors. Also, it suggest an $\mon{g}=s^{\log{d}}$ (\textit{quasi-polynomial}) dependence for individual degree $d$.   

Our next example works for fields of positive characteristic, say $\mb{F}_p$, and uses the Frobenius action of powering by $p$. In this example we see a much bigger ``blow-up'' than in the previous example.     
%reference for this example??
\begin{example} \label{eg2}
Let $f \in \F_p[x_1,\ldots x_n]$, $p$-prime and let $0 < d < p$.  \begin{align*}
f(x)=& x_1^p + x_2^p + \ldots +x_n^p , \\
g(x)=&\big(x_1+x_2 + \ldots x_n \big)^{d}
\end{align*}
\end{example}

Notice again that $g$ is a factor of $f$, but $\mon{f}=n$ and $\mon{g}={{n+d-1}\choose{d}} \approx n^d$.  Thus if $s$ denotes sparsity of $f$, then $\mon{g}=s^{d}$. 
%(\textit{exponential}) dependence for individual degree $d$.   

The above example is particularly interesting because it shows that for general sparse polynomials, with no bound on the individual degree (for instance if the individual degree can be as large as $n$), the factors of a polynomial can have exponentially more monomials than the original polynomial! Thus there is no hope of proving an efficient sparsity bound for general sparse polynomials. 

Note however, that this example only applies to fields of certain characteristics. For instance for fields of characteristic $0$, the previous example might be the one with the worst possible blowup, and hence for such fields, an efficient sparsity bound for general sparse polynomials might still hold. However any proof of such a sparsity bound must be able to take advantage of the properties of the underlying field. The techniques for the sparsity bound proved in this paper are oblivious to the underlying field, and thus, given Example~\ref{eg2}, the best possible sparsity bound for factors of a polynomial that one can hope to show with such techniques is of the form $\BigO(s^{d})$.

\newpage
\section{Factoring Algorithm} 
\label{sec:factAlgo}

In this section, we give our deterministic factorization algorithm for sparse polynomials with small individual degree, thus proving Theorem \ref{THM:Main2}. The runtime of the algorithm strongly depends on the bound in Theorem \ref{thm:sb}. To emphasize this dependence, we state our results in terms of $\SB(n,s,d)$. Theorem \ref{THM:Main1} follows by instating the upper bound.  

As outlined in Section \ref{sec:proof 2}, we first focus on monic polynomials. Then we show how to extend the algorithm to general polynomials. 

% As the first step, we show that given a projection and a correct partition of the ``pieces'', we can evaluate the irreducible factors a monic polynomial, assuming that the projected irreducible factors do not overlap (i.e. have no non-trivial gcd). 

\subsection{Black-box Factoring of Sparse Monic Polynomials (given some advice)} 
\label{sec:BBfact}

In this section we give an algorithm that takes as input a sparse monic polynomial $f(y, \xb)$ of bounded individual degree, as well as some additional information about its factorization pattern, and then outputs (in some sense) blackbox access to its factors. 

The algorithm mirrors that black-box factorization algorithm of \cite{KaltofenTrager90}. 

The algorithm assumes that it is given an assignment $\ab \in \F^n$ for which no two distinct coprime factors of $f(y,\xb)$ have non-trivial gcd, when we set $\xb = \ab$,
and it is given the correct partition of the factors of $f(y,\ab)$ (i.e. the partition gives the grouping of the factors of $f(y,\ab)$ that will correspond to the factors of $f$). The algorithm outputs evaluations of the irreducible factors of $f$ at any input $(y_0, \bb) \in \F^{n+1}$. More precisely, for any $\bb \in \F^n$, and any irreducible factor $h_i(y, \xb)$ of $f$, the algorithm will output the univariate polynomial $h_i(y, \bb)$ which can then be evaluated at $y_0$. 

Given an input point $\bb \in \F^n$, the algorithm uses $\ab$ as an anchor point and draws a line to $\bb$. Next, the algorithm computes a bi-variate factorization of 
the polynomial $f(y, \ell_{\ab,\bb}(t))$ (see Definition~\ref{def:line}). Finally, the algorithm outputs the black-boxes for each factor of $f$ by matching the factors of 
$f(y, \ell_{\ab,\bb}(t))$ to the factors of $f(y,\ab)$. We will describe our black-box factoring algorithm below:
%discuss in what is our output.

\begin{algorithm}[H] 
\label{alg:black-box factorization}
    \KwIn{$s$-sparse monic (in $y$) polynomial $f \in \Fny$ of individual degree at most  $d$. \\
    Assignments: $\ab, \bb \in \F^n$ \\ %be such that $\Res_y(h_i, h_{i'})(\ab) \nequiv 0$ \\
    Univariate Polynomials: $g_1(y), g_2(y), \ldots , g_r(y)$ \\
    Partition: $A_1 \dot{\bigcup} A_2 \dot{\bigcup} \cdots 
    \dot{\bigcup} A_m = [r]$ \\ 
    Exponent Vector: $\bar{e} =  (e_1, e_2, \ldots, e_m) \in [d]^{m}$}
%    \KwOut{ $h_{1}(y,\bb), h_{2}(y,\bb), \ldots, h_{m}(y,\bb)$ TBD }
\KwOut{Univariate Polynomials: $\varphi_1(y), \varphi_2(y), \ldots, \varphi_m(y)$}

    \bigskip    
    $\tilde{f}(y,t) \leftarrow f(y, \ell_{\ab,\bb}(t))$;
    
    Compute the bi-variate factorization of $\tilde{f}(y,t) = f_1^{v_1}(y,t) \cdot f_2^{v_2}(y,t) \cdots f_{r'}^{v_{r'}}(y,t)$;
%we have already reserved s for sparsity    
    \tcc{wlog the polynomials are monic in $y$}
    
    %$ U \leftarrow [r']$; 
    
     \For{$i \leftarrow 1$ \KwTo $m$}{
   %$\hat{f_i}(y) \leftarrow \prod \limits _{j \in A_i} g_j(y)$; 
   
    %$B_i \leftarrow \emptyset$;
     $\tilde{h}_i(y,t) \leftarrow 1$;
    
     \For{$k \leftarrow 1$ \KwTo $r'$}{
      %\lIf{$h_k(y,0)$ divides $\hat{f}_i(y)$}{$B_i = B_i \cup \set{k}$; $U = U \setminus \set{k}$}{} 
    
      \lIf{there exists $j \in A_i$ s.t $g_j(y) \divs f_k(y,0)$}
       {$\tilde{h}_i(y,t) \leftarrow \tilde{h}_i(y,t) \cdot f_k^{v_k / e_i}(y,t)$}{} %$U = U \setminus \set{k}$}{} 
     }
     %$\tilde{f_i}(y,t) \leftarrow \prod \limits _{k \in B_i} h_k(y,t)$; 
     }   
     \Return $\tilde{h}_1(y,1), \tilde{h}_2(y,1), \ldots, \tilde{h}_m(y,1)$;
     \caption{Black-Box Evaluation of Factors}
\end{algorithm}

%We will now show that correct partition of $[r]$ will give us blackboxes of all factors.  %We will heavily use the fact that $f$ is monic in $y$. 
%We embed $\ab$ in our bivariate polynomial for ensuring square-freeness.    

\begin{lemma}[Black-box Factorization]
\label{lem:black-box factorization}
Let $f(y,\xb) \in \Fny$ be a polynomial monic in $y$ with individual degrees at most $d$. %and let $\ab \in \F^n$ be such that $\Disc_yf(\ab) \neq 0$. 
Suppose $f(y,\xb)$ can be written as
$f(y,\xb) = \prod \limits _{i=1}^m h_i^{e_i}(y,\xb)$ 
such that $\gcd(h_i,h_{i'}) = 1$ for $i \neq i'$.
Then given:
\begin{enumerate}
\item 
\label{C1}
a point $\ab \in \F^n$ such that $\forall i \neq i': \Res_y(h_i, h_{i'})(\ab) \neq 0$

\item 
\label{C2}
monic irreducible polynomials $g_1(y), g_2(y), \ldots , g_r(y)$

\item  
\label{C3}
a partition $A_1 \dot{\bigcup} A_2 \dot{\bigcup} \cdots 
    \dot{\bigcup} A_m = [r]$ such that for all $i \in [m]: h_i(y,\ab) = \prod \limits _{j \in A_i} g_j(y)$

\item
\label{C4}
exponent vector $\bar{e} =  (e_1, e_2, \ldots, e_m) \in [d]^{m}$ 
\end{enumerate}
and a point $\bb \in \F^n$,  Algorithm \ref{alg:black-box factorization} computes $h_{1}(y,\bb), h_{2}(y,\bb), \ldots, h_{m}(y,\bb)$, using $\poly(n,\cost_{\F}(d))$ field operations.
 
%   monic irreducible polynomials $g_1(y), g_2(y), \ldots , g_r(y)$ with a partition $A_1 \dot{\bigcup} A_2 \dot{\bigcup} \cdots 
%     \dot{\bigcup} A_m = [r]$ such that for all $i \in [m]: f_i(y,\ab) = \prod \limits _{j \in A_i} g_j(y)$, 

%such that $f(y,\xb) = \prod \limits _{i=1}^k f_i(y,\xb)$.

\end{lemma}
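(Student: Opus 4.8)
The plan is to run Algorithm~\ref{alg:black-box factorization} in parallel with the ``true'' line restrictions $H_i(y,t) := h_i\bigl(y,\ell_{\ab,\bb}(t)\bigr)$ of the irreducible factors of $f$, and to prove that the polynomial $\tilde h_i(y,t)$ assembled by the inner loop equals $H_i(y,t)$; since $\ell_{\ab,\bb}(1)=\bb$ (Definition~\ref{def:line}), the output $\tilde h_i(y,1)$ is then exactly $h_i(y,\bb)$, as required. The proof rests on three structural facts I would establish first. (a) Since $f$ is monic in $y$ we may take each $h_i$ monic in $y$, so $\tilde f(y,t)=f\bigl(y,\ell_{\ab,\bb}(t)\bigr)=\prod_i H_i(y,t)^{e_i}$ is again monic in $y$; hence every bivariate irreducible factor $f_k$ of $\tilde f$ has positive $y$-degree, may be normalized to be monic in $y$ (each $\lc_y(f_k)$ is a unit in $\F[t]$ because the $\lc_y(f_k)^{v_k}$ multiply to $1$), and satisfies $\deg_y f_k(y,0)=\deg_y f_k\ge 1$. (b) From the Sylvester-matrix description, $\Res_y(H_i,H_{i'})$ is obtained from $\Res_y(h_i,h_{i'})\in\Fn$ by the substitution $\xb\mapsto\ell_{\ab,\bb}(t)$ (here one uses that monic-in-$y$ keeps $\deg_y H_i=\deg_y h_i$), so by the second property of Lemma~\ref{lem:res} it specializes at $t=0$ to $\Res_y(h_i,h_{i'})(\ab)\ne 0$ (hypothesis~\ref{C1}); thus $\Res_y(H_i,H_{i'})$ is a nonzero polynomial in $t$, and for $i\ne i'$ the polynomials $H_i,H_{i'}$ share no factor of positive $y$-degree in $\F[y,t]$. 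With (a) this forces each $f_k$ to divide exactly one $H_i$ --- call its index $i(k)$ --- and, by unique factorization in $\F[y,t]$, $H_i=\prod_{k:\,i(k)=i}f_k^{v_k/e_i}$, so each $e_i\mid v_k$ with $i(k)=i$ and the exponents used by the algorithm are honest integers. (c) For $j\in A_i$ and $j'\in A_{i'}$ with $i\ne i'$ the monic irreducibles $g_j,g_{j'}$ are distinct: otherwise a common $g$ would divide $\gcd\bigl(h_i(y,\ab),h_{i'}(y,\ab)\bigr)$, which is $1$ since $\Res_y\bigl(h_i(y,\ab),h_{i'}(y,\ab)\bigr)=\Res_y(h_i,h_{i'})(\ab)\ne 0$ (Lemma~\ref{lem:res}, property 2, plus the classical univariate resultant/coprimality criterion, see \cite{CGL92}).

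With (a)--(c) in hand, the core step is the correctness of the matching test ``there exists $j\in A_i$ with $g_j\mid f_k(y,0)$'' used in the inner loop: I would show it holds for $i=i(k)$ and fails for every $i\ne i(k)$. For $i=i(k)$: $f_k(y,0)$ divides $H_{i(k)}(y,0)=h_{i(k)}(y,\ab)=\prod_{j\in A_{i(k)}}g_j(y)$ by hypothesis~\ref{C3}, and since $f_k(y,0)$ is nonconstant and $\F[y]$ is a UFD, some $g_j$ with $j\in A_{i(k)}$ divides it. For $i\ne i(k)$: if some $g_j$ with $j\in A_i$ divided $f_k(y,0)$, then $g_j$ would divide $\prod_{j''\in A_{i(k)}}g_{j''}$, forcing $g_j=g_{j''}$ for some $j''\in A_{i(k)}$ and contradicting (c). Hence for each $k$ the inner loop multiplies $f_k^{v_k/e_i}$ into $\tilde h_i$ precisely when $i=i(k)$, so at termination $\tilde h_i(y,t)=\prod_{k:\,i(k)=i}f_k(y,t)^{v_k/e_i}=H_i(y,t)$ and $\tilde h_i(y,1)=h_i(y,\bb)$.

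For the running time, $\deg_y\tilde f\le d$ and $\deg_t\tilde f\le\deg_{x_1}f+\dots+\deg_{x_n}f\le nd$, so $\tilde f$ has total degree $\BigO(nd)$; its bivariate factorization costs $\bigl(\cost_{\F}(\BigO(nd))\bigr)^{\BigO(1)}=\poly(n,\cost_{\F}(d))$ by the bivariate case ($r=2$) of the factorization routine implicit in \cite{Kaltofen89}, using $\cost_{\F}(nd)\le\poly(n,\cost_{\F}(d))$ for $\F$ finite or $\F=\Q$. The remaining work --- substituting $t=0$, running the $\BigO(d^2)$ univariate divisibility tests among degree-$\le d$ polynomials, and forming the products --- costs $\poly(d)$ field operations, so the total is $\poly(n,\cost_{\F}(d))$.

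The step I expect to demand the most care is (b)--(c): transferring the single coprimality hypothesis~\ref{C1} about the multivariate factors $h_i$ both \emph{downward} to the bivariate $\tilde f$ (so the bivariate factorization refines, rather than crosses, the grouping $\{H_i\}$, which is exactly what makes the per-$f_k$ assignment well defined) and \emph{to the fibre} at $t=0$ (so the $g_j$'s attached to distinct parts are genuinely distinct and the divisibility test is unambiguous). One must also check that normalizing the $f_k$ to be monic in $y$ and reading off the exponents $v_k/e_i$ are legitimate, which is precisely what fact (a) secures.
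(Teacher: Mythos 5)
Your proposal is correct and follows essentially the same route as the paper's proof: restrict to the line $\ell_{\ab,\bb}$, factor the bivariate polynomial, and use the hypothesis $\Res_y(h_i,h_{i'})(\ab)\neq 0$ together with unique factorization to show that the $g_j$-divisibility test at $t=0$ assigns each bivariate irreducible factor $f_k$ to the unique $h_i(y,\ell_{\ab,\bb}(t))$ it divides, that $e_i \mid v_k$, and hence that $\tilde h_i(y,t)=h_i(y,\ell_{\ab,\bb}(t))$, which evaluated at $t=1$ gives $h_i(y,\bb)$. The only differences are cosmetic: you route the pairwise-coprimality of the restricted factors through the nonvanishing of $\Res_y$ as a polynomial in $t$ rather than arguing only in the fibre $t=0$, and you spell out the monic normalization of the $f_k$ and the bivariate-factoring cost more explicitly than the paper does.
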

\begin{proof}
We claim that for each $i \in [m]: \tilde{h}_{i}(y,t) = h_i(y,\ell_{\ab,\, \bb}(t))$ and hence $$\tilde{h}_{i}(y,1) = h_i(y,\ell_{\ab,\, \bb}(1)) = h_i(y,\bb).$$

Since, $f(y,\xb) = \prod \limits _{i=1}^m h_i^{e_i}(y,\xb)$, substituting 
$\xb=\ell_{\ab,\, \bb}(t)$ implies that $\tilde{f}(y,t) = \prod \limits _{i=1}^m h_i^{e_i}(y,\ell_{\ab,\, \bb}(t))$. Fix $k \in [r']$. By Uniqueness of Factorization (Lemma \ref{lem:ufd}), 
$$\exists i: f_k(y,t) \divs h_i(y,\ell_{\ab,\, \bb}(t)).$$
Thus setting $t= 0$, we get that 
$$f_k(y,0) \divs h_i(y,\ab).$$
Now since $h_i(y,\ab)$ is a product of irreducible polynomials $g_j(y)$, for $j \in A_i$, thus 
$$\exists j \in A_i: g_j(y) \divs f_k(y,0).$$

The last step follows from Pre-condition \ref{C3} and Uniqueness of Factorization (Lemma \ref{lem:ufd}). Now suppose $\exists i'$ and  $j' \in A_{i'}: g_{j'}(y) \divs f_k(y,0)$. It follows that $g_{j'}(y) \divs h_i(y,\ab)$. 

Thus, $ \gcd \left( h_i(y,\ab) , h_{i'}(y,\ab) \right) \neq 1.$
By Lemma \ref{lem:res}:
$\Res_y(h_i, h_{i'})(\ab) = 
\Res_y \left( h_i \restrict{\xb = \ab} , h_{i'} \restrict{\xb = \ab} \right) = 0.$
By  Pre-condition \ref{C1}, this is only possible if $i=i'$. 

Now, fix $i$. Let $f_k(y,t)$ and $u_k$ be an irreducible factor of $h_i(y,\ell_{\ab,\, \bb}(t))$ and its degree in the latter, respectively. Now observe that $f_k(y,t)$ doesn't divide any other 
 $h_{i'}(y,\ell_{\ab,\, \bb}(t))$ : suppose it did, then setting $t=0$ and repeating the previous argument we would get a contradiction.
 Consequently, $v_k = u_k \cdot e_i$ and hence: $$f_k^{u_k}(y,t) =
f_k^{v_k / e_i}(y,t) $$ and $$f_k^{v_k / e_i}(y,t)\divs \tilde{h}_i(y,t).$$
We conclude that $h_i(y,\ell_{\ab,\, \bb}(t)) \divs \tilde{h}_{i}(y,t)$. The claim follows by observing that :
$$\prod \limits _{i=1}^m \tilde{h}_i^{e_i}(y,t) =
f_1^{v_1}(y,t) \cdot f_2^{v_2}(y,t) \cdots f_{r'}^{v_{r'}}(y,t) =
\tilde{f}(y,t) = \prod \limits _{i=1}^m h_i^{e_i}(y,\ell_{\ab,\, \bb}(t)).$$
For the runtime, observe that $m,r,r' \leq d$ and clearly $d = \BigO(\cost_{\F}(d))$.
\end{proof}

\subsection{Factoring Sparse Monic Polynomials (without advice)}

With the black-box factoring algorithm of the previous subsection, we get blackbox access to the irreducible factors of the input monic sparse polynomial, and we can use a reconstruction algorithm to reconstruct the actual factors. The caveat is that black-box factorization algorithm of the previous section assumes that it is given some additional information: an assignment $\ab \in \F^n$ for which no two distinct factors of $f(y,\xb)$ have non-trivial gcd, when we set $\xb = \ab$, and the correct partition of the factors of $f(y,\ab)$. 

In this section we show that the advice is actually a member of a small set that can be computed, and hence one can just ``guess" the advice!
Since $f(y,\ab)$ has at most $d$ factors, the number of possible partition is $d^{\BigO(d)}$.
Hence we can ``guess" the correct partition by trying out all the possibilities. In terms of finding $\ab$ as above, the following lemma shows that there exists a small set of points $S \subseteq \F^n$ that contain a point $\ab$ with the required properties for every monic sparse polynomial of degree $d$.

% The caveat is that the black-box factorization algorithm requires a non-overlapping projection with a correct partition of the ``pieces''. Since a monic polynomial of degree $d$ has at most $d$ factors, the number of ``pieces'' is at most $d$. Hence we can ``guess'' the correct partition by trying out all $d^{\BigO(d)}$ possibilities. In terms of finding a non-overlapping projection, the following lemma show that there exists a small set the contain a non-overlapping projection for every monic sparse polynomial of degree $d$.

\begin{lemma}
\label{lem:sparse-res}
Let $f \in \Fny$  be monic in $y$, $s$-sparse polynomial with individual degrees at most $d$ and let $f(y,\xb) =  h_1^{e_1}(y,\xb)  \ldots  h_k^{e_k} (y,\xb)$ be the unique monic factorization of $f(y,\xb)$.
Then there exists a set $S$ of size $\size{S} = (n \cdot \SB(n,d,s))^{\BigO(d)}$ such that for any $f$ as above
there exists an assignment $\ab \in \F^n$ satisfying $\forall i \neq i': \Res_y(h_i, h_{i'})(\ab) \neq 0$.
\end{lemma}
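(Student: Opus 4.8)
The plan is to derive the statement from the product hitting set of Lemma~\ref{lem:pit-sparse-ks}, once the sparsity and individual degrees of the pairwise resultants $\Res_y(h_i,h_{i'})$ are under control. First I would record the structural facts about the factors. Since $f$ is monic in $y$, in its unique monic factorization $f = h_1^{e_1}\cdots h_k^{e_k}$ every $h_i$ is monic in $y$; moreover each $h_i$ genuinely depends on $y$, since otherwise $\lc_y(f) = \prod_i \lc_y(h_i)^{e_i}$ would have a non-constant factor, contradicting monicity of $f$ in $y$. From the identities $\deg_{x_j}(f) = \sum_i e_i\deg_{x_j}(h_i)$ (over every variable $x_j$, and over $y$) we get that each $h_i$ has individual degrees at most $d$ and that $k \le \deg_y(f) \le d$. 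Finally, each $h_i$ divides $f$, so the factor sparsity bound of Theorem~\ref{thm:sb} gives $\mon{h_i} \le \SB(n,s,d)$.

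Next I would analyze the resultants. As the $h_i$ are distinct irreducible, hence pairwise coprime, factors, part~(3) of Lemma~\ref{lem:res} gives $\Res_y(h_i,h_{i'}) \not\equiv 0$ for all $i \ne i'$. Applying part~(1) of Lemma~\ref{lem:res} to $h_i$ and $h_{i'}$ — which are monic in $y$, of individual degrees at most $d$, and of sparsity at most $\SB(n,s,d)$ — shows that each $\Res_y(h_i,h_{i'})$ is a \emph{nonzero} polynomial in $\Fn$ of sparsity at most $s' \defeq (2d\cdot\SB(n,s,d))^{2d}$ with individual degrees at most $d' \defeq 2d^2$. Since $\Res_y(h_i,h_{i'})$ and $\Res_y(h_{i'},h_i)$ agree up to sign, there are at most $\binom{k}{2}\le \binom{d}{2}\le d^2 \defeq k'$ distinct such polynomials to deal with.

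Finally I would take $S \defeq \SP_{(n,s',d',k')}$ from Lemma~\ref{lem:pit-sparse-ks}: a deterministically constructible set of size $\poly(n,s',d',k')$ which, crucially, depends only on the parameters and not on $f$, and is such that any collection of at most $k'$ nonzero $s'$-sparse polynomials in $\Fn$ of individual degree at most $d'$ has a common non-zero in $S$. By the previous paragraph, for every $f$ as in the statement the family $\{\Res_y(h_i,h_{i'}) : i \ne i'\}$ is precisely such a collection, so there is $\ab \in S$ with $\Res_y(h_i,h_{i'})(\ab)\ne 0$ for all $i \ne i'$. A routine estimate, using that $s' = (2d\cdot\SB(n,s,d))^{2d}$ dominates $n$, $d'$ and $k'$, gives $\poly(n,s',d',k') = (n\cdot\SB(n,s,d))^{\BigO(d)}$, the claimed bound on $\size{S}$.

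I do not expect a genuine obstacle: the lemma is a bookkeeping composition of the factor sparsity bound (Theorem~\ref{thm:sb}), the resultant sparsity/degree bound (Lemma~\ref{lem:res}), and the product hitting set (Lemma~\ref{lem:pit-sparse-ks}). The two points requiring care are (i) that a single set $S$ works simultaneously for \emph{all} admissible $f$, which is automatic since the hitting set of Lemma~\ref{lem:pit-sparse-ks} is agnostic to the particular sparse polynomials it is applied to; and (ii) correctly tracking the two successive blow-ups of the sparsity parameter (from $s$ to $\SB(n,s,d)$ when passing to the factors, then from $\SB(n,s,d)$ to $(2d\cdot\SB(n,s,d))^{2d}$ when passing to the resultants) so that the final size estimate collapses to $(n\cdot\SB(n,s,d))^{\BigO(d)}$.
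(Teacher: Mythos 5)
Your proposal is correct and follows essentially the same route as the paper's proof: bound $\mon{h_i}$ by $\SB(n,s,d)$ via Theorem~\ref{thm:sb}, note the pairwise resultants $\Res_y(h_i,h_{i'})$ are nonzero by coprimality and are $(2d\cdot\SB(n,s,d))^{2d}$-sparse of individual degree at most $2d^2$ by Lemma~\ref{lem:res}, and then take $S$ to be the hitting set $\SP_{(n,\,(2d\cdot\SB(n,s,d))^{2d},\,2d^2,\,d^2)}$ of Lemma~\ref{lem:pit-sparse-ks}. The extra bookkeeping you include (monicity and degree bounds on the $h_i$, $k\le d$, instance-independence of the hitting set) is left implicit in the paper but is consistent with its argument.
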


We defer the proof of the lemma to the end of the section.

Given a polynomial $f$, the algorithm will proceeding by trying to reconstruct the factors of $f$ for every projection in $S$ and every partition. Given a purported factorization, we can test it by explicitly multiplying out the polynomials. Clearly, the algorithm will pick up \emph{any} valid factorization of $f$ (not just the irreducible one). We will select the irreducible factorization using the simple characterization given in Lemma \ref{lem:max fact}.

\begin{algorithm}[H] 
\label{algo:factor monic sparse} 

 \KwIn{$s$-sparse polynomial $f \in \Fny$, monic in $y$, of individual degree at most  $d$.}
    \KwOut{monic irreducible factors  $h_{1}, h_{2}, \ldots, h_{m}$, and $e_1, e_2, \ldots, e_m$ such that $f=h_1^{e_1} \cdots h_m^{e_m}$ 
    }
     
    \bigskip

\begin{enumerate}

\item For each $\ab \in S$ (from Lemma \ref{lem:sparse-res}), subset $I \subseteq [d]$, $m' \in [d]$,
a non-empty \textit{partition} of $I$: $A_1 \dot{\bigcup} A_2 \dot{\bigcup} \cdots \dot{\bigcup} A_{m'} = I$,
and exponent vector $\bar{e}' = (e'_1, e'_2, \ldots, e'_{m'}) \in [d]^{m'}$:

% consider $$f(y,\bar{a})=\prod_{i=1}^m f_i(y,\ab) \qquad \qquad \Big[\mbox{here } f_i(y,\ab) =\prod \limits _{j \in A_i} g_j(y)\Big]$$
%f_i(y,\ab) = \prod \limits _{j \in A_i} g_j(y)$
\begin{enumerate}

\item Compute the monic univariate factorization  $f(y,\ab) = \prod \limits_ {j=1}^rg_j(y)$  (Using Lemma \ref{lem:uni})

\item Call Algorithm \ref{alg:black-box factorization} with $f, \ab, \set{A_i}_{i \in [m']}, \bar{e}$ and $\set{g_j(y)}_{j \in I}.$

\item Invoke the reconstruction algorithm from Lemma \ref{lem:rec}
with $n' = n, s' = \SB(n,d,s), d' = d$ using the above as an oracle to reconstruct the polynomials $h'_1(y,\xb), \ldots h'_m(y,\xb)$.

%\item Test that $\gcd (h'_i,h'_j) = 1$, for all $i \neq j$.
% (Using Lemma \ref{lem:res}, Part \ref{P3})

\item Test that $f \equiv {h'}_1^{e'_1} \cdot {h'}_2^{e'_2} \cdots {h'}_{m'}^{e'_{m'}}$
factorization. (Via explicit multiplication)

\end{enumerate}

\item Return a factorization that maximizes the expression $\Phi(\bar{e}) \eqdef 2 \cdot \sum \limits_{i=1}^{m'} {e'}_i - {m'}$.  \tcc{Pick the most ``refined'' factorization}

\end{enumerate}
\caption{Sparse Monic Polynomial Factorization}
\end{algorithm}

\begin{lemma}
\label{lem:monic sparse}
Let $f(y,\xb) \in \Fny$ be a polynomial, monic in $y$, with individual degrees at most $d$. Given $f$, Algorithm \ref{algo:factor monic sparse} computes the unique monic factorization of $f$.
That is, the algorithm outputs coprime, monic irreducible polynomials $h_{1}, h_{2}, \ldots, h_{m}$, and $e_1, e_2, \ldots, e_m$ such that $f=h_1^{e_1} \cdots h_m^{e_m}$, using at most $(n \cdot \SB(n,d,s))^{\BigO(d)} \cdot\poly(\cost_{\F}(d))$ field operations.
\end{lemma}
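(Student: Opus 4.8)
The plan is to prove two complementary facts and combine them with Lemmas~\ref{lem:max fact} and~\ref{lem:ufd}. First, I will exhibit one choice of the guessed data $(\ab, I, m', \set{A_i}_i, \bar e')$ in the outer loop of Algorithm~\ref{algo:factor monic sparse} for which the inner loop reconstructs the genuine monic factorization $f=h_1^{e_1}\cdots h_k^{e_k}$ exactly and the verification in step~1(d) succeeds. Second, I will observe that \emph{every} tuple surviving step~1(d) yields an honest (though not necessarily irreducible) factorization of $f$ into non-constant monic polynomials with positive exponents. Granting these, step~2 picks a surviving factorization maximizing $\Phi$; since the $\Phi$-value of the genuine factorization is the global maximum over all non-constant factorizations of $f$ and that factorization is among the survivors, the maximum over survivors is attained and equals that value, so by Lemma~\ref{lem:max fact} the output is an irreducible, pairwise coprime monic factorization of $f$, which by Lemma~\ref{lem:ufd} together with monicity is \emph{the} unique monic factorization.

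To build the good guess I would argue as follows. By Lemma~\ref{lem:sparse-res} the set $S$ contains a point $\ab$ with $\Res_y(h_i,h_{i'})(\ab)\neq 0$ for all $i\neq i'$; by Lemma~\ref{lem:res} this is equivalent to $\gcd\bigl(h_i(y,\ab),h_{i'}(y,\ab)\bigr)=1$, which is Pre-condition~\ref{C1} of Lemma~\ref{lem:black-box factorization}. Since $f$ is monic in $y$ of $y$-degree at most $d$, so is $f(y,\ab)$, hence it has $r\le d$ distinct monic irreducible factors $g_1,\dots,g_r$; take $I=[r]$, $m'=k$ and $\bar e'=(e_1,\dots,e_k)\in[d]^k$. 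Every irreducible $g_j$ divides $f(y,\ab)=\prod_i h_i(y,\ab)^{e_i}$, and, the $h_i(y,\ab)$ being pairwise coprime, $g_j$ divides exactly one of them; so $A_i\defeq\set{j\in[r] : g_j\mid h_i(y,\ab)}$ is a genuine partition of $[r]$ meeting Pre-condition~\ref{C3}. With these choices Lemma~\ref{lem:black-box factorization} guarantees that Algorithm~\ref{alg:black-box factorization}, with anchor $\ab$ and any query point $\bb$, outputs $h_1(y,\bb),\dots,h_k(y,\bb)$, i.e.\ step~1(c) has black-box access to each $h_i$. Each $h_i$ divides $f$, so it has individual degree at most $d$, and by Theorem~\ref{thm:sb} its sparsity is at most $\SB(n,d,s)$; hence the deterministic sparse-reconstruction algorithm of Lemma~\ref{lem:rec}, run with sparsity parameter $\SB(n,d,s)$ and degree parameter $d$, recovers each $h_i$ exactly and the explicit-multiplication test in step~1(d) passes. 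The second fact is almost immediate: step~1(d) literally checks $f\equiv\prod_i (h_i')^{e_i'}$ by multiplying out, so any surviving tuple does give a factorization of $f$; one only discards tuples containing a unit factor (the good guess never produces one), so Lemma~\ref{lem:max fact} applies to every survivor retained. For a wrong guess the reconstruction either fails (its oracle is not an $\SB(n,d,s)$-sparse degree-$d$ polynomial) or returns polynomials failing the verification, and the tuple is dropped, so no incorrect factorization is ever output.

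For the runtime, the number of tuples $(\ab, I, m', \set{A_i}_i, \bar e')$ is $\size{S}\cdot 2^{d}\cdot d\cdot d^{\BigO(d)}\cdot d^{\BigO(d)}=(n\cdot\SB(n,d,s))^{\BigO(d)}$, since the number of set partitions of an at-most-$d$-element set is at most $d^{d}$ and $m'\le d$. For each tuple, the univariate factorization of $f(y,\ab)$ costs $\cost_{\F}(d)$ (Lemma~\ref{lem:uni}); the reconstruction of Lemma~\ref{lem:rec} uses $\poly(n,\SB(n,d,s),d)$ operations, each oracle call to Algorithm~\ref{alg:black-box factorization} costing $\poly(n,\cost_{\F}(d))$ (Lemma~\ref{lem:black-box factorization}); and the test in step~1(d) is carried out by multiplying the at most $d$ reconstructed factors one at a time, aborting the tuple the moment a partial product exceeds $\SB(n,d,s)$ monomials---which is sound because, whenever the tuple is valid, every partial product divides $f$ and so is $\SB(n,d,s)$-sparse by Theorem~\ref{thm:sb}---for a cost of $\poly(n,d,\SB(n,d,s))$. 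Multiplying these bounds gives the claimed $(n\cdot\SB(n,d,s))^{\BigO(d)}\cdot\poly(\cost_{\F}(d))$ field operations.

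The step I expect to be the crux is establishing that the partition is genuinely recoverable: that a good anchor $\ab$ really does force each irreducible factor of $f(y,\ab)$ to ``belong'' to a single $h_i$. That is exactly what Pre-condition~\ref{C1} buys (supplied by Lemma~\ref{lem:sparse-res} via the resultant bound of Lemma~\ref{lem:res}), and it is also where one must verify that over fields of positive characteristic the univariate and bivariate factorization routines and the resultant identities behave as expected; Theorem~\ref{thm:sb} is what keeps both the reconstruction and the verification efficient throughout.
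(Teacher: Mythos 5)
Your overall route is the same as the paper's: exhibit one enumerated guess satisfying the pre-conditions of Lemma~\ref{lem:black-box factorization} (via Lemma~\ref{lem:sparse-res}, Theorem~\ref{thm:sb} and Lemma~\ref{lem:rec}), observe that every surviving tuple is an honest factorization, and select among survivors via $\Phi$ and Lemma~\ref{lem:max fact}; your runtime accounting also matches (and your explicit partial-product cap in step~1(d) and your pruning of unit factors before applying Lemma~\ref{lem:max fact} are, if anything, more careful than the paper). However, your concrete witness for the ``good guess'' is wrong in general, and this is a genuine gap. In step~1(a) the $g_j$'s satisfy $f(y,\ab)=\prod_{j=1}^r g_j(y)$, so repeated irreducible factors occur repeatedly in the list; you instead treat them as the \emph{distinct} irreducible factors and set $I=[r]$, $A_i=\{j: g_j \divs h_i(y,\ab)\}$. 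Pre-condition~\ref{C3} demands $h_i(y,\ab)=\prod_{j\in A_i}g_j(y)$ exactly, whereas your choice gives $\prod_{j\in A_i}g_j(y)=h_i(y,\ab)^{e_i}$ under the with-multiplicity reading (e.g.\ for $f=h^2$ with $h(y,\ab)=(y-1)(y-2)$ one gets $(y-1)^2(y-2)^2\neq h(y,\ab)$), and under the distinct-factors reading it fails whenever some $h_i(y,\ab)$ is non-squarefree, which the resultant condition from Lemma~\ref{lem:sparse-res} does not rule out (it only controls pairs $i\neq i'$). So for non-squarefree $f$ (or non-squarefree projections) Lemma~\ref{lem:black-box factorization} cannot be invoked for the guess you construct.

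The repair is precisely the reason Algorithm~\ref{algo:factor monic sparse} enumerates subsets $I\subseteq[d]$ rather than always taking all of $[r]$ --- a feature you use in the runtime count but not in the correctness argument. By uniqueness of factorization, for each $i$ and each monic irreducible $q$ dividing $h_i(y,\ab)$ with multiplicity $\mu$ in $h_i(y,\ab)$, the list $g_1,\dots,g_r$ contains at least $\mu$ (indeed $e_i\mu$) indices $j$ with $g_j=q$, and by the coprimality at $\ab$ these indices are claimed by no other $i'$; choosing exactly $\mu$ of them for $A_i$ and letting $I=\bigcup_i A_i$ (possibly a proper subset of $[r]$), together with $m'=m$ and $\bar e'=(e_1,\dots,e_m)$, yields an enumerated guess that does satisfy Pre-conditions~\ref{C1}--\ref{C4}. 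With this correction the remainder of your argument goes through and coincides with the paper's proof.
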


\begin{proof}
Let $f=h_1^{e_1} \cdots h_m^{e_m}$ be the unique monic factorization of $f$. By definition, $\forall i \neq i': \gcd(h_i, h_{i'}) = 1$,
$\forall i: e_i \leq d$ and $m \leq d$.  
We first claim that as the algorithm iterates over all settings of $\ab, I, m', \bar{e}'$ and the partition, one of these ``guesses" satisfies the pre-conditions of Lemma \ref{lem:black-box factorization}.

By Lemma \ref{lem:sparse-res}, there exists $\ab \in S$ (where $S$ is the set from Lemma \ref{lem:sparse-res}) such that $\forall i \neq i': \Res_y(h_i, h_{i'})(\ab) \neq 0$.
Consider the monic univariate factorization:
$$\prod \limits_ {j=1}^r g_j(y) = f(y,\ab) =  \prod \limits_ {i=1}^m h_i^{e_i}(y,\ab).$$ 
Clearly, $r \leq d$, and the claim follows from the uniqueness of factorization (Lemma \ref{lem:uni}). Therefore, by Lemma \ref{lem:black-box factorization}, given this guess, Algorithm \ref{alg:black-box factorization} will produce oracle access for the polynomials $h_1,\cdots, h_m$. By Theorem \ref{thm:sb}, $\forall i: \mon{h_i} \leq \SB(n,s,d)$. Therefore, the reconstruction algorithm from Lemma \ref{lem:rec} will, indeed, output the polynomials $h'_1, \ldots, h'_m$ such that $\forall i: h'_i \equiv h_i$, which will pass the subsequent tests. 

Let $f \equiv {h'}_1^{e'_1} \cdot {h'}_2^{e'_2} \cdots {h'}_{m'}^{e'_{m'}}$ be the factorization returned by the algorithm.
By Lemma \ref{lem:max fact}, the polynomials ${h'}$ are irreducible and pairwise coprime. The final claim follows by uniqueness of factorization.

% First, observe that the polynomials are coprime. Next, we claim that each $h'_{i}$-s must be irreducible. Indeed, observe that if there exists an irreducible $h'_{i}$, we can obtain a factorization of $f$ with a value larger than $\sum \limits _{i=1}^{m'} e'_i$.

\noindent \textbf{Runtime Analysis:}
By Lemma \ref{lem:sparse-res}, there are $\poly(n \cdot \SB(n,d,s))^{\BigO(d)} \cdot d^{\BigO(d)}$ iterations. We outline the runtime of each step in a iteration:

\begin{enumerate}

\item By Lemma \ref{lem:uni} - $\cost_{\F}(d)$.

\item By Lemma \ref{lem:black-box factorization} - $\poly(n,\cost_{\F}(d))$ per query.

\item By Lemma \ref{lem:rec} - $\poly(n,\SB(n,d,s),d)$ time and queries.

%\item By Lemma \ref{lem:res} - $\SB(n,d,s)^{\BigO(d)}$. 

\item By Theorem \ref{thm:sb} -  $\SB(n,d,s)^{\BigO(d)}$.
\end{enumerate}
Putting all together: $n^{\BigO(d)} \cdot \SB(n,d,s)^{\BigO(d)} \cdot
\poly(\cost_{\F}(d))$.
%$d^{\BigO(d)} \cdot \poly(n, \cost_{\F}(d),\SB(n,d,s)) = \poly(n, \cost_{\F}(d),\SB(n,d,s)^d)$.

\end{proof}

We now give the proof of Lemma \ref{lem:sparse-res}.

\begin{proof}[Proof of Lemma \ref{lem:sparse-res}]
By Theorem \ref{thm:sb}, for each $i \in [k]: \mon{h_i} \leq \SB(n,d,s)$. For $i,i' \in [k]$, consider the polynomials: $$f_{(i,i')}(\xb) \eqdef \Res_y (h_i,h_{i'})(\xb).$$ Fix $i, i' \in [k]$ such that $i \neq i'$.
By definition, $f_{(i,i')} \nequiv 0$. Moreover, by Lemma \ref{P1}, $f_{(i,i')}$ is $(2d \cdot \SB(n,d,s))^{2d}$-sparse polynomial with individual degrees at most $2d^2$. 
As $k \leq d$, by Lemma \ref{lem:pit-sparse-ks}, 
$\SP_{\left( n, \; (2d \cdot \SB(n,d,s))^{2d}, \; 2d^2, \; d^2 \right)}$ contains a common non-zero for all $f_{(i,i')}$-s. %Property \ref{P4} completes the proof.
The claim about the size follows from Lemma \ref{lem:pit-sparse-ks}. 
\end{proof}

% We give the basic outline of our algorithm below. We will elaborate on the various steps of the algorithm in the later sections. 

%\textbf{Remark}: The variable $y$ is not a special variable here. Our pf works if idc in just one variable (if we assume some sparsity bounds). 

\subsection{Factoring General Sparse Polynomials}

In this section we show how to extend the factorization algorithm for monic sparse  polynomials to general sparse polynomials. We begin by showing how to convert a (general)  sparse polynomial with ``small'' individual degrees into a ``somewhat'' sparse monic polynomial of a ``slightly larger'' individual degrees. 

\begin{definition}
\label{def:make monic}
Let $f(x_1, \ldots, x_n, x_{n+1}) \in \Fna$ and let $k \leq d$ denote the degree of $x_{n+1}$ in $f$. Let $f_{k} \eqdef \lc_{x_{n+1}}(f)$. We define: $\hat{f}(y, x_1, \ldots, x_n) \eqdef 
f_{k}^{k-1} \cdot f(x_1, \ldots ,x_n, \frac{y}{f_k}).$
\end{definition}

\begin{lemma}
\label{lem:monic sparsity and degree}
Suppose $f$ is an $s$-sparse polynomial with individual degrees at most $d$. Then function $\hat{f}$ is an $(s^d)$-sparse polynomial in $\Fny$, monic in $y$ with individual degrees at most $d^2$.
\end{lemma}
%Not said that f^ is monic in y?

\begin{proof}
Write:  $f = \sum _{j=0}^k f_j \cdot x_{n+1}^j$ such that $\forall j, x_{n+1} \not \in \var(f_j).$ Then
$$\hat{f} = \sum \limits _{j=0}^k f_j \cdot f_k^{k-1} \cdot y^j / f_k^j = y^k + \sum \limits _{j=0}^{k-1} f_j \cdot f_k^{k-1-j} \cdot y^j.$$
Observe that for every $x_i: 
\deg_{x_i}(f_j \cdot f_k^{k-1-j}) \leq d + d(k-1) \leq d^2$.
For the sparsity of $\hat{f}:$
$$\mon{\hat{f}} = 1 + \sum \limits _{j=0}^{k-1} \mon{f_j \cdot f_k^{k-1-j}} 
\leq \sum \limits _{j=0}^{k} \mon{f_j} \cdot \mon{f_k}^{k-1} 
\leq \sum \limits _{j=0}^{k} \mon{f_j} \cdot s^{k-1} = \mon{f} \cdot s^{k-1} \leq s^k \leq s^d.$$
\end{proof}

In addition to the question regarding the sparsity of the polynomial $\hat{f}$, there are two follow-up questions we need to address: 
\begin{enumerate}
\item How are the factors of $\hat{f}$ related the original factors of $f$? 

\item As the degree of $y$ in $\hat{f}$ is at most $d$, we can recover at most $d$ factors, while $f$ could potentially have $dn$ factors!
How can we recover the remaining factors?
\end{enumerate}

The following lemma provides the answers to both questions.

\begin{lemma}
\label{lem:monic factorization}
Let $f(\xb,x_{n+1}) = \prod \limits _{i=1}^{m'} h_i^{e_i}(\xb,x_{n+1}) \cdot \prod \limits _{l=m'+1}^{m} h_l^{e_l}(\xb)$ and $f_k(\xb) = \prod \limits _{j=1}^r w_j^{\beta_j}(\xb)$ be pair-wise coprime, irreducible factorizations of $f$ and $f_k$, respectively such that $x_{n+1} \in \var(h_i)$ iff $i \in [m']$.
Furthermore, let $\hat{f}(y,\xb) = \prod \limits _{j=1}^{\hat{m}} \hat{h}_j^{\hat{e}_j}(y,\xb)$ be the unique monic factorization of $\hat{f}$. Then 

\begin{enumerate}
\item $\hat{m} = m'$ and there exist polynomials $u_1(\xb), \ldots, u_{m'}(\xb) \in \Fn$ and a permutation $\sigma:[m'] \to [m']$ such that: $\hat{h}_i \left(f_k \cdot x_{n+1},\xb \right) = h_{\sigma(i)} \left( \xb,x_{n+1} \right) \cdot u_i(\xb)$  and $\hat{e}_i = e_{\sigma(i)}$ for $i \in [m']$.

\item $m - m' \leq r$. Moreover, there exists an injective map $\tau : \set{m'+1, \ldots, m} \to [r]$ such that $h_l$ and $w_{\tau(l)}$ are nonzero scalar multiples of each other (i.e. $h_l \sim w_{\tau(l)})$, for $l \in \set{m'+1, \ldots, m}$.
\end{enumerate}
\end{lemma}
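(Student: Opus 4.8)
The plan is to reduce everything to unique factorization over the field of rational functions $\F(\xb)$, where the substitution $x_{n+1}\mapsto y/f_k$ becomes an isomorphism. Concretely I would introduce the $\F(\xb)$-algebra isomorphism $\Psi\colon \F(\xb)[x_{n+1}]\to\F(\xb)[y]$ sending $p(\xb,x_{n+1})\mapsto p(\xb,y/f_k)$, whose inverse sends $q(y,\xb)\mapsto q(f_k x_{n+1},\xb)$; directly from Definition~\ref{def:make monic} one has $\hat f=f_k^{k-1}\cdot\Psi(f)$, so $\hat f$ and $\Psi(f)$ are associates in $\F(\xb)[y]$. The one nontrivial ingredient beyond this is Gauss's Lemma, which lets me move factorizations between $\F[\xb,x_{n+1}]$ (resp. $\F[\xb,y]$) and $\F(\xb)[x_{n+1}]$ (resp. $\F(\xb)[y]$).

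For Item 1 I would first note that each $h_i$ with $i\in[m']$ is \emph{primitive} as a polynomial in $x_{n+1}$ over $\F[\xb]$: otherwise its $\F[\xb]$-content (a non-unit) times its primitive part (of positive $x_{n+1}$-degree) would contradict irreducibility. Hence, by Gauss, the $h_i$ are irreducible in $\F(\xb)[x_{n+1}]$ and, being pairwise coprime, pairwise non-associate there; applying $\Psi$ makes $\{\Psi(h_i)\}_{i\in[m']}$ pairwise non-associate irreducibles of $\F(\xb)[y]$, while $\Psi(h_l)=h_l$ is a unit of $\F(\xb)[y]$ for $l>m'$. Thus $\hat f=f_k^{k-1}\Psi(f)=\bigl(f_k^{k-1}\prod_{l>m'}h_l^{e_l}\bigr)\prod_{i\le m'}\Psi(h_i)^{e_i}$ is a factorization of $\hat f$ into irreducibles of $\F(\xb)[y]$; since the $\hat h_j$ are monic, hence primitive, in $y$, Gauss also makes $\prod_j\hat h_j^{\hat e_j}$ such a factorization. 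Uniqueness over $\F(\xb)[y]$ then gives $\hat m=m'$, a permutation $\sigma$ of $[m']$ with $\hat e_i=e_{\sigma(i)}$, and $\hat h_i=c_i(\xb)\,\Psi(h_{\sigma(i)})$ for units $c_i\in\F(\xb)^{*}$. Applying $\Psi^{-1}$ (i.e.\ substituting $y=f_k x_{n+1}$) yields $\hat h_i(f_k x_{n+1},\xb)=c_i(\xb)\,h_{\sigma(i)}(\xb,x_{n+1})$, and the final step is to see $c_i$ is a polynomial: the left side lies in $\F[\xb,x_{n+1}]$, so writing $c_i=p_i/q_i$ in lowest terms gives $q_i\mid p_i h_{\sigma(i)}$, hence $q_i\mid h_{\sigma(i)}$, and primitivity of $h_{\sigma(i)}$ in $x_{n+1}$ forces $q_i\in\F^{*}$; take $u_i:=c_i$.

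For Item 2 I would take $\lc_{x_{n+1}}$ of both sides of $f=\prod_{i\le m'}h_i^{e_i}\prod_{l>m'}h_l^{e_l}$; since $\lc_{x_{n+1}}$ is multiplicative and $\lc_{x_{n+1}}(h_l)=h_l$ for $l>m'$, this shows $\prod_{l>m'}h_l^{e_l}$ divides $f_k$ in $\F[\xb]$. Each $h_l$ ($l>m'$) is therefore an irreducible of $\F[\xb]$ dividing $f_k=\prod_{j=1}^r w_j^{\beta_j}$, so by unique factorization in $\F[\xb]$ it is associate to a (unique, since the $w_j$ are pairwise coprime) $w_j$; set $\tau(l):=j$. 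Injectivity of $\tau$ is immediate from pairwise coprimality of the $h_l$, and injectivity gives $m-m'\le r$.

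The bulk of this is routine ring-theoretic bookkeeping; the step I expect to need the most care — and the main place a naive argument goes wrong — is the passage between factorization rings: one must verify primitivity of the $h_i$ in $x_{n+1}$ and of the monic $\hat h_j$ in $y$ so that Gauss's Lemma applies in both directions, and one must upgrade the associate $c_i$ produced over $\F(\xb)$ to an honest polynomial $u_i\in\F[\xb]$. Skipping the primitivity observations would only give the conclusions over $\F(\xb)$, which is too weak for the downstream use of the lemma (reconstructing the genuine polynomial factors of $f$).
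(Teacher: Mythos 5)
Your proposal is correct and follows essentially the same route as the paper: both parts reduce to unique factorization over the rational function field $\F(\xb)$ (with the $x_{n+1}$-free factors absorbed as units, via the substitution $y \leftrightarrow f_k\cdot x_{n+1}$), and Item 2 is proved identically by taking $\lc_{x_{n+1}}$ of the factorization of $f$. The only difference is that you make explicit the Gauss's-Lemma/primitivity bookkeeping and the upgrade of the unit $c_i\in\F(\xb)^{*}$ to a polynomial $u_i\in\Fn$, which the paper's proof states more tersely.
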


We defer the proof of the lemma to the end of the section.

% Given a $s$-sparse polynomial $f \in \Fna$ of individual degrees bounded by $d$, we will convert it into an $s^d$-sparse \emph{m}
% choose a ``special" variable (say $y$) out of these $n+1$ variables and make $f$ monic in  $y$. 

In light of the above, the algorithm proceeds by first converting a given polynomial $f$ into a monic polynomial $\hat{f}$ to recover the factors that depend on $x_{n+1}$. Next, the algorithm recursively factors $f_k$ (that does not depend on $x_{n+1}$) to recover the factors that \emph{do not} depend on $x_{n+1}$ (if any). 

\begin{algorithm}[H] 
\label{alg:factor sparse}

 \KwIn{$s$-sparse polynomial $f \in \Fna$ with individual degrees at most  $d$.}
    \KwOut{irreducible factors  $h_{1}, h_{2}, \ldots, h_{m}$, and $e_1, e_2, \ldots, e_m$ such that $f=h_1^{e_1} \cdots h_m^{e_m}$ 
    }
     
    \bigskip 

\begin{enumerate}
\item \lIf{$n \leq 1$}{Return the bi-variate factorization of $f$}

\item $k = \deg_{x_{n+1}}(f)$; $f_{k} \leftarrow \lc_{x_{n+1}}(f)$;

\item Compute $\hat{f}(y,\xb)$ (Using Definition \ref{def:make monic})

\item Compute the unique monic factorization $\hat{f}(y,\xb) = \prod \limits _{i=1}^{\hat{m}} \hat{h}_i^{e_i}(y,\xb)$ (Using Algorithm \ref{algo:factor monic sparse})

\item 
\label{Step restore}
\lForEach{$i \in [\hat{m}]$}{$h_i(x_1, \ldots, x_n, x_{n+1}) \leftarrow \hat{h}_i (f_{k} \cdot x_{n+1},\xb)$}

\item Recursively compute a factorization of $f_{k}(x_1, \ldots, x_n) = \prod \limits _{j=1}^r w_j^{\beta_j}(x_1, \ldots, x_n)$

\item \For{$j \leftarrow 1$ \KwTo $r$}{
$\alpha_j \leftarrow -\beta_j \cdot ( k -1 )$;

\For{$i \leftarrow 1$ \KwTo $\hat{m}$}{
Find the maximal $d_{ij}$ such that $w_j^{d_{ij}} \divs h_i$;
\tcc{By iteratively applying Lemma \ref{lem:sparse division} with $t = \SB(n,d^2,s^d)$}

$\alpha_j \leftarrow \alpha_j + {d_{ij}} \cdot e_i$; $h_i \leftarrow h_i / w_j^{d_{ij}}$; } 
}

\item \Return $h_1, \ldots, h_{\hat{m}}, w_1, \ldots, w_r$ and $e_1, \ldots, e_{\hat{m}}, \alpha_1, \ldots, \alpha_r$; \tcc{Return only those where $\alpha_j > 0$}

\end{enumerate}
\caption{Main Algorithm: overview}
\end{algorithm}

\begin{theorem}
\label{thm:fact sparse}
Let $f(\xb) \in \Fn$ be a polynomial with individual degrees at most $d$. Given $f$, Algorithm \ref{alg:factor sparse} outputs pairwise coprime, irreducible polynomials $h_{1}, h_{2}, \ldots, h_{m}$, and $e_1, e_2, \ldots, e_m$ such that $f=h_1^{e_1} \cdots h_m^{e_m}$, using  
$\left( n \cdot \SB(n,d^2,s^d) \right)^{\BigO(d^2)} \cdot\poly(\cost_{\F}(d^2))$ field operations.
\end{theorem}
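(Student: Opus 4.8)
The plan is to prove correctness and the running‑time bound simultaneously, by induction on the number of variables $N$ on which $f$ depends. The base case $N \le 2$ is disposed of by Step~1: the bivariate factorization algorithm (the lemma implicit in~\cite{Kaltofen89}, with $r=2$) runs in $\poly(\cost_{\F}(d^2))$ time, since a two‑variable polynomial of individual degree at most $d$ has total degree at most $2d \le d^2$. For the inductive step I will assume the theorem for all polynomials in fewer than $N$ variables and trace the execution of Algorithm~\ref{alg:factor sparse} on an $N$‑variable input $f$, writing $k = \deg_{x_{n+1}}(f)$ and $f_k = \lc_{x_{n+1}}(f)$ (assuming, as we may, that $f$ depends on $x_{n+1}$, for otherwise we simply recurse with one fewer variable).

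For correctness I will follow how the true factorization $f = \prod_{i\le m'} h_i^{e_i}\cdot\prod_{l>m'} h_l^{e_l}$ — with $h_1,\dots,h_{m'}$ the irreducible factors genuinely depending on $x_{n+1}$ — is reconstructed. By Lemma~\ref{lem:monic sparsity and degree}, $\hat{f}$ is $s^d$‑sparse, monic in $y$, of individual degree at most $d^2$, so Step~4, via the correctness of Algorithm~\ref{algo:factor monic sparse} (Lemma~\ref{lem:monic sparse}), returns the unique monic factorization $\hat{f} = \prod_i \hat{h}_i^{\hat{e}_i}$. Part~1 of Lemma~\ref{lem:monic factorization} gives $\hat{m} = m'$ and, after relabeling by the permutation $\sigma$ furnished there, shows that Step~\ref{Step restore} produces $h_i := \hat{h}_i(f_k x_{n+1},\xb) = h_{\sigma(i)}\cdot u_i$ with $\hat{e}_i = e_{\sigma(i)}$, where each $u_i$ is a ``spurious'' factor not depending on $x_{n+1}$. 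Since $f_k$ depends on at most $N-1$ variables, is $s$‑sparse, and has individual degree at most $d$, the recursive call in Step~6 correctly returns $f_k = \prod_j w_j^{\beta_j}$; by Part~2 of Lemma~\ref{lem:monic factorization} the remaining factors $h_l$ ($l>m'$) appear (up to scalars) among the $w_j$. The crux is to show Step~7 cancels exactly the spurious parts: substituting $y = f_k x_{n+1}$ into $\hat{f}=\prod_i\hat{h}_i^{\hat{e}_i}$ and using $\hat{f}(f_k x_{n+1},\xb) = f_k^{k-1} f$ (Definition~\ref{def:make monic}), then cancelling $\prod_{i\le m'}h_i^{e_i}$ in the UFD $\F[x_1,\dots,x_{n+1}]$, yields $\prod_i u_i^{\hat{e}_i} = f_k^{k-1}\prod_{l>m'} h_l^{e_l}$, which is a product of powers of the $w_j$. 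Hence every irreducible factor of every $u_i$ is some $w_j$; the exponents $d_{ij}$ extracted via Lemma~\ref{lem:sparse division} equal the multiplicities of $w_j$ in $u_i$ (using $w_j\nmid h_{\sigma(i)}$, since $h_{\sigma(i)}$ is irreducible and depends on $x_{n+1}$); dividing these out leaves $h_{\sigma(i)}$ (up to a nonzero scalar, which we normalize); and the accumulator $\alpha_j = -(k-1)\beta_j + \sum_i \hat{e}_i\, d_{ij}$ collapses to $e_l$ when $w_j \sim h_l$ for the (unique) $l > m'$, and to $0$ otherwise. After the $\alpha_j > 0$ filter, Step~8 therefore outputs precisely the irreducible factors of $f$ with their correct multiplicities, completing the induction for correctness.

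For the running time I will bound one recursive call and multiply by the recursion depth $N \le n+1$. Within a call, Step~4 dominates: applying Lemma~\ref{lem:monic sparse} to $\hat{f}$ (which is $s^d$‑sparse of individual degree $d^2$) costs $(n\cdot\SB(n,d^2,s^d))^{\BigO(d^2)}\cdot\poly(\cost_{\F}(d^2))$. Steps~2, 3 and~5 are polynomial arithmetic on polynomials with at most $\SB(n,d^2,s^d)\cdot s^{d^2}$ monomials, and Step~7 makes at most $r\hat{m}d^2 = \poly(n,d)$ calls to Lemma~\ref{lem:sparse division} with $t = \SB(n,d^2,s^d)$, each $\poly(n,\SB(n,d^2,s^d))$. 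To license those calls I will note that every intermediate $h_i$ occurring in Step~7 divides $f_k^{k-1}f$, which is $s^d$‑sparse of individual degree at most $d^2$, so by Theorem~\ref{thm:sb} it is $\SB(n,d^2,s^d)$‑sparse and each $d_{ij}\le d^2$. Summing over the $\le n+1$ levels and absorbing polynomial factors gives the claimed $(n\cdot\SB(n,d^2,s^d))^{\BigO(d^2)}\cdot\poly(\cost_{\F}(d^2))$ bound. I expect the main obstacle to be the bookkeeping in the correctness step: verifying that the spurious factors $u_i$ consist precisely of powers of the $w_j$ and are removed exactly in Step~7, and that the accumulator $\alpha_j$ genuinely computes the exponent of $w_j$ in $f$ — this requires combining the two parts of Lemma~\ref{lem:monic factorization} carefully with uniqueness of factorization, alongside the separate verification that no intermediate polynomial ever becomes too dense for Lemma~\ref{lem:sparse division} to apply.
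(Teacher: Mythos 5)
Your proposal is correct and follows essentially the same route as the paper: correctness via Lemma~\ref{lem:monic sparsity and degree} and the two parts of Lemma~\ref{lem:monic factorization} (with the sparsity bound of Theorem~\ref{thm:sb} licensing the sparse-division calls), and the runtime via the recursion $T(n,s,d)=T(n-1,s,d)+(n\cdot\SB(n,d^2,s^d))^{\BigO(d^2)}\cdot\poly(\cost_{\F}(d^2))$ dominated by the monic factorization step. In fact you spell out details (the identity $\prod_i u_i^{\hat e_i}=f_k^{k-1}\prod_{l>m'}h_l^{e_l}$, the bound $d_{ij}\le d^2$, and the bookkeeping showing $\alpha_j$ equals the true exponent) that the paper's proof leaves implicit, and these check out.
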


\begin{proof}
The correctness of the algorithm follows from Lemmas \ref{lem:monic sparsity and degree} and \ref{lem:monic factorization}. In particular,
by Theorem \ref{thm:sb}: $\mon{\hat{h}_i}, \mon{h_i} \leq \SB(n,d^2,s^d)$.

\noindent \\ \textbf{Runtime Analysis:}
 Let $T(n,s,d)$ denote the number of field operations of the algorithm given an $s$-sparse, $n$-variate polynomial of individual degrees at most $d$.
We get that $T(1,s,d), T(2,s,d) = \poly(\cost_{\F}(d))$. For $n\geq 3$, we outline the runtime of each step.

\begin{enumerate}

\item $T(1,s,d)$,  $T(2,s,d) = \poly(\cost_{\F}(d))$.

\item $\poly(n,s,d)$.

\item By Lemma \ref{lem:monic sparsity and degree} - $\poly(n,s^d)$.

\item By Lemmas \ref{lem:monic sparsity and degree} and
\ref{lem:monic sparse} - 
$(n \cdot \SB(n,d^2,s^d))^{\BigO(d^2)} \cdot\poly(\cost_{\F}(d^2))$.

\item By Lemma \ref{lem:monic sparsity and degree} and Theorem \ref{thm:sb} - $\poly \left( \SB(n,d^2,s^d) \right)$.

\item Since $\mon{f_k} \leq \mon{f} \leq s$ - $T(n-1,s,d)$.

\item By Lemma \ref{lem:sparse division} - $\poly \left(nd , \SB(n,d^2,s^d) \right)$.
\end{enumerate}

Consequently: $T(n,s,d) = T(n-1,s,d) + 
\left( n \cdot \SB(n,d^2,s^d) \right)^{\BigO(d^2)} \cdot\poly(\cost_{\F}(d^2))$ implying that 
$T(n,s,d) = \left( n \cdot \SB(n,d^2,s^d) \right)^{\BigO(d^2)} \cdot\poly(\cost_{\F}(d^2))$.
\end{proof}

We now give the proof of Lemma \ref{lem:monic factorization}.

\begin{proof}[Proof of Lemma \ref{lem:monic factorization}]

Part 1. Observe that:
$$ \prod \limits _{i=1}^{m'} h_i^{e_i}(\xb,x_{n+1}) \cdot \prod \limits _{l=m'+1}^{m} h_l^{e_l}(\xb) \cdot f_{k}^{k-1}(\xb) 
= f(\xb,x_{n+1}) \cdot f_{k}^{k-1}(\xb) =  
\hat{f} \left( f_k \cdot x_{n+1},\xb \right) = \prod \limits _{j=1}^{\hat{m}} \hat{h}_j^{\hat{e}_j} \left(f_k \cdot x_{n+1},\xb \right)$$
Let us view the above as univariate polynomials in $x_{n+1}$ over  $\left( \F(x_1, \ldots, x_n) \right)[x_{n+1}]$.
Given this view, the term $\prod \limits _{l=m'+1}^m h_l^{e_l}(\xb) \cdot f_{k}^{k-1}(\xb)$ is a field element in the field of rational functions: $ \F(x_1, \ldots, x_n) $. Therefore, by Uniqueness of Factorization (Lemma \ref{lem:ufd}) $\hat{m} = m'$ and there exist polynomials $u_1(\xb), \ldots, u_{m'}(\xb) \in \F(x_1, \ldots, x_n)$ and a permutation $\sigma:[m'] \to [m']$ such that: $\hat{h}_i \left(f_k \cdot x_{n+1},\xb \right) = h_{\sigma(i)} \left( \xb,x_{n+1} \right) \cdot u_i(\xb)$  and $\hat{e}_i = e_{\sigma(i)}$ for $i \in [m']$. Finally, as $\hat{h}_i \left(f_k \cdot x_{n+1},\xb \right)$-s are  polynomials (and not rational functions) and  $h_{\sigma(i)} \left( \xb,x_{n+1} \right) \cdot u_i(\xb)$-s are irreducible polynomials, it follows that $u_1(\xb), \ldots, u_{m'}(\xb) \in \Fn$. 
\bigskip

Part 2. Observe that:
$$\prod \limits _{j=1}^r w_j^{\beta_j}(\xb) = f_k(\xb) = 
\lc_{x_{n+1}}(f) = \lc_{x_{n+1}} \left( \prod \limits _{i=1}^{m'} h_i^{e_i}(\xb,x_{n+1}) \right) \cdot \prod \limits _{l=m'+1}^{m} h_l^{e_l}(\xb).$$
and the claim follows by Uniqueness of Factorization (Lemma \ref{lem:ufd}).
\end{proof}

\newpage

\section{Open Questions}
\label{sec:open}

We conclude by listing some open problems. 

Perhaps the most immediate and natural question left open by this work is to understand whether one can obtain an improved sparsity bound on the factors of $s$-sparse polynomials of bounded individual degree. As we discussed in Section~\ref{sec:tightness}, the best lower bound for we know for the sparsity of factors of $s$-sparse polynomials of individual degree $d$ is $s^{\log d}$ over fields of characteristic $0$ and about $s^{d}$ over general fields. Thus there is a considerable gap between these lower bounds and the upper bound that we prove, and it is a very interesting question to close to gap. 

Another more ambitious goal is to obtain a non trivial sparsity bound with no restriction on individual degree. As we noted in Section~\ref{sec:tightness}, such a result would not be possible for all fields, and any such proof would have to use the properties of the underlying field to obtain a better bound. 

One could also study the algorithmic implications of a general sparsity bound. It seems challenging to derandomize polynomial factoring, even if we assume that factors of a given sparse polynomial are sparse (without assuming any individual degree bound). We leave this as an interesting open problem.

Given the result of~\cite{KSS14} which shows an equivalence between the problems of polynomial identity testing and polynomial factorization, this also naturally raises the question (and indeed it was raised in~\cite{KSS14}) of whether one can derandomize factoring for the classes of polynomials for which we know how to derandomize PIT. Sparse polynomials are a natural example of such a class, but there are several other natural classes that one could consider. 

\section*{Acknowledgments}

The authors would like to thank Ramprasad Saptharishi \cite{Sap18} for sharing with us the example presented in Claim \ref{example:Sap} and for letting us include it in this paper.
In addition, the authors would like to thank the anonymous referees for useful comments that improved the presentation of the results.

\bibliographystyle{abbrv}     %%% Use it for Arxiv
%\bibliographystyle{alpha}     %%% Temp

%\bibliography{bibliography} 
\bibliography{C:/Work/Papers/bibliography}

% \appendix

\end{document}